\theoremstyle{plain}
\newtheorem{thm}{Theorem}[section]
\newtheorem{prop}[thm]{Proposition}
\newtheorem{lemma}[thm]{Lemma}
\newtheorem{cor}[thm]{Corollary}
\theoremstyle{definition}
\newtheorem{dfn}[thm]{Definition}
\theoremstyle{remark} 
\newtheorem{remark}[thm]{Remark}
\theoremstyle{plain}
\numberwithin{equation}{section}
\newcommand{\alpheqn}[1][\relax]{
     \refstepcounter{equation}
     \if#1\relax \relax
       \else \label{#1}
     \fi  
     \setcounter{saveeqn}{\value{equation}}%
    \setcounter{equation}{0}%
    \renewcommand{\theequation}{\thealphequation}}
\newcommand{\reseteqn}{\setcounter{equation}{\value{saveeqn}}%
     \renewcommand{\theequation}{\thearabicequation}}
\providecommand{\mathscr}{\mathcal} 
\newcommand{\sa}{{\operatorname{sa}}}
\newcommand{\Lip}{{\operatorname{Lip}}}
\newcommand{\dist}{{\operatorname{dist}}}
\newcommand{\cd}{\cdot}
\newcommand{\clc}{\cdot\ldots\cdot}
\newcommand{\ot}{\otimes}
\newcommand{\op}{\oplus}
\newcommand{\ci}{\circ}
\newcommand{\ti}{\times}
\newcommand{\nn}{\mathbb{N}}
\newcommand{\zz}{\mathbb{Z}}
\newcommand{\rr}{\mathbb{R}}
\newcommand{\cc}{\mathbb{C}}
\newcommand{\al}{\alpha}
\newcommand{\be}{\beta}
\newcommand{\ga}{\gamma}
\newcommand{\Ga}{\Gamma}
\newcommand{\de}{\delta}
\newcommand{\De}{\Delta}
\newcommand{\ep}{\varepsilon}
\newcommand{\la}{\lambda}
\newcommand{\si}{\sigma}
\newcommand{\te}{\theta}
\newcommand{\ze}{\zeta}
\newcommand{\pa}{\partial}
\newcommand{\ov}{\overline}
\newcommand{\C}[1]{\mathcal{#1}}
\newcommand{\G}[1]{\mathfrak{#1}}
\newcommand{\T}[1]{\textup{#1}}
\newcommand{\E}[1]{\emph{#1}}
\newcommand{\B}[1]{\mathbb{#1}}
\newcommand{\fork}[2]{\left\{ \begin{array}{#1} #2 \end{array} \right.} 
\newcommand{\ma}[2]{\left(\begin{array}{#1} #2 \end{array} \right)}
\newcommand{\su}{\subseteq}
\newcommand{\inn}[1]{\langle #1 \rangle}
\newcommand{\sem}{\setminus}
\renewcommand{\leq}{\leqslant}
\renewcommand{\geq}{\geqslant}
\declaretheorem[style=theorem,name={Theorem}]{theoremletter}
\title{The Podle\'s spheres converge to the sphere}
\author{Konrad Aguilar, Jens Kaad, and David Kyed}
\address{Department of Mathematics, Pomona College, 610 N. College Ave., Claremont, CA 91711}
\email{konrad.aguilar@pomona.edu}
\address{Department of Mathematics and Computer Science,
The University of Southern Denmark,
Campusvej 55, DK-5230 Odense M,
Denmark}
\email{kaad@imada.sdu.dk}
\address{Department of Mathematics and Computer Science,
The University of Southern Denmark,
Campusvej 55, DK-5230 Odense M,
Denmark}
\email{dkyed@imada.sdu.dk}
\subjclass[2010]{58B32, 58B34, 46L89; 46L30, 81R15, 81R60} 
\keywords{Quantum metric spaces, fuzzy spheres, Podle\'s sphere, spectral triples, quantum Gromov-Hausdorff distance}
\begin{document}

\begin{abstract}
We prove that the Podle{\'s} spheres $S_q^2$ converge in quantum Gromov-Hausdorff distance to the classical $2$-sphere as the deformation parameter $q$ tends to $1$. Moreover, we construct a $q$-deformed analogue of the fuzzy spheres, and prove that they converge to $S_q^2$ as their linear dimension tends to infinity, thus providing a quantum counterpart to a classical result of Rieffel.
\end{abstract}

\maketitle

\tableofcontents

\section{Introduction}
The theory of  $C^*$-algebras provides a vast noncommutative generalisation of the theory of locally compact topological spaces,  and by imposing suitable additional structures, one obtains noncommutative (or quantum) analogues of more sophisticated topological spaces. Two very successful examples of this phenomenon are the theory of quantum groups \cite{kustermans-vaes-C*-lc, wor:cpqgrps} which are noncommutative analogues of topological groups, and Connes' notion of  spectral triples \cite{Con:NCG}, which are the noncommutative counterparts to (spin) Riemannian manifolds. In the same vein, it is also very natural to ask for a noncommutative generalisation of ordinary metric spaces, and Rieffel's seminal work \cite{Rie:MSS, Rie:GHD} provides a very satisfactory answer to this question. Rieffel's fundamental insight is that the right noncommutative  counterpart to a metric on a compact topological  space is a certain densely defined seminorm on a unital $C^*$-algebra, and he dubbed these structures \emph{compact quantum metric spaces}. Over the past 20 years, ample examples of compact quantum metric spaces have emerged, and the theory has been developed in several different directions through the works of many hands; see
 \cite{LatAgu:AF, BMR:DSS, HSWZ:STC, Ker:MQG, Lat:QGH,   Li:GH-dist,    Rie:MSA} and references therein. One of the most important features of the theory is that it admits a generalisation of the classical Gromov-Hausdorff distance \cite{ edwards-GH-paper, gromov-groups-of-polynomial-growth-and-expanding-maps}, known as the \emph{quantum Gromov-Hausdorff distance} \cite{Rie:GHD}, which allows one to study the theory of quantum metric spaces from an analytic point of view, and thus ask questions pertaining to continuity and convergence. As an example of this, Rieffel showed in \cite{Rie:GHD} that the  noncommutative tori $A_{\theta}$ admit a natural quantum metric structure and that they vary continuously in the deformation parameter $\theta$ with respect to the quantum Gromov-Hausdorff distance, and in \cite{Rie:MSG} that the so-called fuzzy-spheres also admit a compact quantum metric structure with respect to which they converge to the classical 2-sphere $S^2$ as their linear dimension tends to infinity; for many more examples in this direction see for instance \cite{aguilar:thesis,  kaad-kyed,  Lat:AQQ, LatPack:Solenoids}. \\
The definition of compact quantum metric spaces is inspired by  Connes'  theory of noncommutative geometry, and the latter is therefore a natural source of many interesting  examples, which may be viewed as the noncommutative counterparts of  Riemannian manifolds when these are  considered merely as metric spaces with their  Riemannian metric.  Despite a continuous effort over at least 30 years, it has proven quite difficult to reconcile the theory of quantum groups with Connes' noncommutative geometry, \cite{CoMo:TST, Maj:QNG}. In fact, even for the most fundamental example, Woronowicz' quantum $SU(2)$, there are still several competing candidates for good spectral triples, and it is not known which of these provide quantum $SU(2)$ with a quantum metric space structure, \cite{BiKu:DQQ,DLSSV:DOS,KaSe:TST,KRS:RFH,NeTu:DCQ}. However, just as $SU(2)$ has the classical $2$-sphere as a homogeneous space, its quantised counterpart $SU_q(2)$ also has a quantised $2$-sphere $S_q^2$, known as the standard Podle{\'s} sphere,  as a ``homogenous space'' \cite{Pod:QS}, and the work of D\k{a}browski and Sitarz \cite{DaSi:DSP} provides $S_q^2$ with a spectral triple, which was shown in \cite{AgKa:PSM} to turn $S_q^2$ into a compact quantum metric space. 
This  result provides the first genuine quantum analogue of a Riemannian geometry on $S_q^2$, and 
the most pertinent question to investigate at this point is therefore if the quantised $2$-spheres $S_q^2$  converge to the classical round $2$-sphere as the deformation parameter $q$ tends to 1. The present paper answers this questions in the affirmative:
\begin{theoremletter}[{see Theorem \ref{thm:podles-converging-to-classical}}]\label{mainthm:A}
As $q$ tends to 1, the  Podle{\'s} spheres $S_q^2$  converge to the classical $2$-sphere $S^2$  in the quantum Gromov-Hausdorff distance. 
\end{theoremletter}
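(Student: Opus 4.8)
The plan is to place every $S_q^2$ together with the classical $S^2$ inside a single ambient structure and to bound the quantum Gromov--Hausdorff distance by a family of admissible Lip-norms whose associated Hausdorff distances tend to $0$ as $q\to1$. The natural starting point is the Peter--Weyl decomposition: as a corepresentation of $SU_q(2)$ the coordinate algebra $\mathcal{O}(S_q^2)$ splits into a multiplicity-free sum $\bigoplus_{n\geq 0}V_n$ of blocks with $\dim V_n=2n+1$, matching exactly the decomposition of the polynomial algebra on $S^2$ into spherical harmonics. Choosing a basis $\{Y^n_k\}$ block by block identifies the underlying vector spaces of $\mathcal{O}(S_q^2)$, for all $q$ near $1$, with one fixed space $V$; the only $q$-dependence then sits in the multiplication and involution, governed by the $q$-Clebsch--Gordan coefficients, which converge to their classical values as $q\to1$. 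This is the algebraic shadow of the fact that the $C^*$-algebras $C(SU_q(2))$, and hence their Podle\'s subalgebras $C(S_q^2)$, assemble into a continuous field over the parameter interval, with fibre $C(S^2)$ at $q=1$.

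Second, I would establish continuity of the metric data. The D\k{a}browski--Sitarz operator $D_q$ acts diagonally with respect to the block decomposition of the spinor module, and one shows that the Lip-norms $L_q(a)=\|[D_q,a]\|$ vary continuously along the field, recovering the round Lip-norm on $S^2$ at $q=1$. The crucial quantitative input, already underlying the fact that each $(C(S_q^2),L_q)$ is a compact quantum metric space \cite{AgKa:PSM}, is that $L_q$ grows with the block index $n$: any element with bounded Lip-norm and bounded state-average is concentrated on low blocks. This yields a \emph{spectral tail estimate}, reducing every comparison to a fixed finite-dimensional truncation $\bigoplus_{n\leq N}V_n$, on which the convergence of the structure constants and of $D_q$ is transparent.

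Third, for each $q$ I would construct an admissible Lip-norm $L$ on the direct sum $C(S_q^2)\oplus C(S^2)$, that is, a Lip-norm restricting to $L_q$ and $L_1$ on the two summands, by combining them with a bridging term of the form $\tfrac{1}{\gamma}\|a-b\|$ measured through the common truncation, and optimising over the coupling weight $\gamma>0$. Estimating the Hausdorff distance, for the metric induced by $L$, between the embedded state spaces $S(C(S_q^2))$ and $S(C(S^2))$ then reduces, via the tail estimate, to controlling on each fixed block the discrepancy between the two $C^*$-norms and between $L_q$ and $L_1$; both are $O(|q-1|)$ by the convergence of the Clebsch--Gordan data. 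Balancing the geometric tail contribution against the finitely many block contributions gives $\dist_{\GH}\big(S_q^2,S^2\big)\to 0$.

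The hard part will be the interplay, on the single space $V$, between the two inequivalent $C^*$-norms: pointwise convergence of the deformed multiplication does not by itself control operator norms, and the bridging term must be arranged so as to remain a genuine Lip-norm compatible with both completions at once. Making the spectral tail estimate \emph{uniform} in $q$, so that one truncation level $N=N(\varepsilon)$ suffices for all $q$ near $1$, is the technical crux; once this uniformity is secured, the finite-dimensional convergence of the structure constants closes the argument.
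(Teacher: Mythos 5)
Your overall architecture --- approximate each $C(S_q^2)$ by a fixed finite-dimensional truncation uniformly in $q$, compare the truncations across $q$ via a continuous field, and conclude by the triangle inequality --- is exactly the architecture of the paper's proof, and your cross-$q$ step is essentially sound: on the fixed span of the blocks up to level $N$ (which is the paper's quantum fuzzy sphere $F_q^N$), Blanchard's continuous field \cite{Bla:DCH} together with Rieffel's continuity criterion \cite[Theorem 11.2]{Rie:GHD} yields continuity in $\dist_{\T{Q}}$, which is Proposition \ref{prop:quantum-fuzzy-to-classical-fuzzy}. Note, however, that your bridge term $\tfrac{1}{\gamma}\|a-b\|$ coupling $C(S_q^2)$ directly with $C(S^2)$ is not meaningful as stated, since $a$ and $b$ live in different $C^*$-algebras with inequivalent norms; the paper sidesteps this by bridging only \emph{within} a fixed $q$, where $F_q^N \su C(S_q^2)$ so that $\|x-y\|$ makes literal sense (Lemma \ref{lem:order-unit-version-of-frederics-result}), and by moving across $q$ only at the finite-dimensional level.

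The genuine gap is your ``spectral tail estimate'', and it is twofold. First, to invoke the bridge lemma you need, for each $x$ in the Lip unit ball, an element $y$ of the truncation with \emph{both} $\|x-y\|\leq \ep\, L_{D_q}(x)$ and $L_{D_q}(y)\leq L_{D_q}(x)$; a bare Peter--Weyl truncation $P_N$ fails the second requirement outright (it is not positive and not a Lip-contraction --- already classically one needs a Fej\'er-type positive kernel rather than a Dirichlet-type sharp cutoff), and its norm-distance to the identity on Lip-balls is not controlled uniformly in $q$, since no Jackson-type machinery is available across the deformation. The paper's replacement is the quantum Berezin transform $\be_N = (1 \ot h_N)\De$, a unital completely positive map whose image is precisely $F_q^N$ (Lemmas \ref{l:altII} and \ref{lem:fuzzy-equal-image}); its Lip-contractivity (Proposition \ref{p:derVI}) is itself delicate, because $\pa$ is built from the \emph{left} action of $\C U_q(\G{su}(2))$ while $\be_N$ commutes only with the \emph{right} action, and is rescued by the conjugation identity $u\,\pa(x)\,u^* = \de(x)$ of Proposition \ref{p:derV}. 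Second, the uniformity in $q$ that you flag as the crux is not ``already underlying'' \cite{AgKa:PSM}: that paper gives total boundedness of each Lip-ball for fixed $q$, with no quantitative or uniform content. The present paper obtains $\|x - \be_N(x)\| \leq d_q(h_N,\epsilon)\, L_{D_q}(x)$ (Proposition \ref{p:approxIII}) and then controls $d_q(h_N,\epsilon)$ uniformly for $q$ near $q_0$ by restricting the two states to the commutative quantised interval $C^*(A,1)$, whose metric geometry is known explicitly from \cite{GKK:QI}, and proving joint continuity in $q$ of the resulting bound (Lemmas \ref{l:conmetV}--\ref{l:conmetVII}). Without a concrete mechanism of this kind --- a positive, equivariant smoothing kernel in place of the sharp truncation, plus an explicit uniform estimate on its distance to the identity --- your plan stalls at precisely the step you yourself identify as the hard part, and blockwise $O(|q-1|)$ convergence of Clebsch--Gordan data cannot repair it, since such convergence controls neither operator norms of the tails nor the Lip-norms of the truncated elements.
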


One of the main ingredients in the proof of Theorem \ref{mainthm:A} is a quantum analogue of Rieffel's convergence result for fuzzy spheres mentioned above. More precisely,  we introduce a sequence of finite dimensional compact quantum metric spaces $(F^N_q)_{N\in \mathbb{N}}$, which play the role of quantised counterparts to the classical fuzzy spheres, and prove the following result:
\begin{theoremletter}[{see Theorem \ref{thm:fuzzy-to-podles}}]\label{mainthm:B}
For each $q \in (0,1]$ the sequence of quantised fuzzy spheres $\left(F_q^N\right)_{N\in \nn}$ converges to $S_q^2$ with respect to the quantum Gromov-Hausdorff distance.  
\end{theoremletter}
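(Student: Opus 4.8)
The plan is to prove convergence of the quantised fuzzy spheres $F_q^N$ to the Podleś sphere $S_q^2$ by constructing, for each $N$, an explicit \emph{bridge} in the sense of Rieffel that links the two compact quantum metric spaces and whose \emph{length} tends to $0$ as $N \to \infty$. The natural strategy, mirroring Rieffel's treatment of the classical fuzzy spheres, is to realise both $F_q^N$ and $S_q^2$ as subspaces of a common ambient structure governed by the representation theory of $SU_q(2)$. Concretely, the coordinate algebra of $S_q^2$ decomposes as a direct sum of spin-$\ell$ spherical harmonic modules, and each finite dimensional space $F_q^N$ should arise as the truncation to total spin $\ell \leq N$ (a $q$-deformed analogue of matrix algebras acting on the spin-$N/2$ representation). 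First I would set up the precise matrix-geometry description of $F_q^N$ and identify the natural unital, completely positive maps relating $F_q^N$ and $S_q^2$—a compression or ``Berezin-type'' map in one direction and an inclusion or conditional expectation in the other.

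The core of the argument is a quantitative comparison of the two Lip-seminorms under these maps. I would fix a common order-unit space (or operator system) in which both state spaces embed and estimate the Hausdorff distance between the two Lip-norm unit balls, equivalently the distance between the two induced metrics on the joint state space. The key computational input is the action of the Dąbrowski–Sitarz Dirac operator on the spherical harmonic modules: since the Lip-seminorm on $S_q^2$ is given by commutators with this Dirac operator, and the analogous seminorm on $F_q^N$ is the truncation thereof, I would track how the eigenvalues/commutator norms behave on each spin-$\ell$ block. The essential point is that for fixed $q$ the spectral data on the low-spin blocks of $F_q^N$ agree exactly with those of $S_q^2$, so the discrepancy is concentrated entirely in the high-spin tail $\ell > N$, whose contribution must be shown to decay.

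The main obstacle, I expect, is controlling this high-spin tail uniformly enough to produce a bridge length that genuinely goes to $0$. This requires a \emph{Lip-norm approximation} result: one must show that every self-adjoint element of $S_q^2$ with Lip-seminorm at most $1$ can be approximated in the $C^*$-norm, up to an error $\varepsilon_N \to 0$, by its truncation to spin $\leq N$, with the truncation's $F_q^N$-seminorm controlled by $1$. In the classical ($q=1$) case this follows from the rapid decay of Fourier coefficients of Lipschitz functions relative to the growth of Laplacian eigenvalues; in the $q$-deformed setting one must establish the corresponding decay estimate directly from the structure of the Dirac operator on $S_q^2$, which is the technically delicate step. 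A clean way to package this is to prove a \emph{Leibniz-type} or \emph{bounded-derivation} estimate showing that the truncation maps are contractive (or nearly contractive) for the respective seminorms, and then feed the tail decay into the standard bridge-length formula.

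Assuming these ingredients, the conclusion follows from Rieffel's criterion: I would assemble the bridge from the chosen completely positive maps, bound its length by a quantity of the form $C_q \cdot \varepsilon_N$ using the tail estimate, and conclude that $\operatorname{dist}_{\mathrm{q}}(F_q^N, S_q^2) \to 0$ as $N \to \infty$. I would expect the constant $C_q$ to depend on $q$ but to remain finite for each fixed $q \in (0,1]$, which is exactly what Theorem B asserts; the uniformity in $q$ needed for Theorem A would then be handled separately.
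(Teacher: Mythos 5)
Your overall architecture---a Berezin-type compression paired with the inclusion, fed into a bridge-style estimate---matches the paper's skeleton (the paper packages the bridge step as Lemma~\ref{lem:order-unit-version-of-frederics-result}, whose admissible seminorm $L_\ep(a,b)=\max\{L(a),K(b),\tfrac1\ep\|a-b\|\}$ is exactly your bridge). But there is a genuine gap at the step you gloss as ``a Leibniz-type or bounded-derivation estimate showing that the truncation maps are contractive (or nearly contractive).'' If by truncation you mean the sharp spectral projection onto the spin-$\leq N$ harmonics, the step fails: since each $\pa_\eta$ preserves the corepresentation blocks $\T{span}\{u^n_{ij}\}$, the sharp projection commutes with $\pa$ only up to the operator norm of a Dirichlet-type Fourier projection on $C(SU_q(2))$, whose Lebesgue constants diverge with $N$, so near-contractivity is false. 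If instead you mean the Berezin transform $\be_N = (1\ot h_N)\De$, the naive equivariance you would need also fails, because the derivation $\pa$ from the D\k{a}browski--Sitarz triple is built from the \emph{left} action of $\C U_q(\G{su}(2))$, i.e.\ it pairs on the same leg of $\De$ that $\be_N$ smears. The paper's key idea, which is absent from your proposal, is to conjugate by the fundamental unitary: $u\,\pa(x)\,u^* = \de(x)$ (Proposition~\ref{p:derV}), where $\de$ is assembled from the \emph{right}-action twisted derivations $\de_e,\de_f,\de_k$, which do commute with $\be_N$ (Lemma~\ref{l:derI}); unitarity of $u$ then gives $\|\pa\be_N(x)\| = \|\de\be_N(x)\| = \|\be_N\de(x)\| \leq \|\pa(x)\|$ (Proposition~\ref{p:derVI}). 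Without this (or some substitute) your plan has no route to the Lip-contractivity that the bridge estimate requires.

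The second divergence is that the ``technically delicate'' tail estimate you anticipate---decay of $q$-Fourier coefficients of Lipschitz elements against the Dirac spectrum---is neither carried out in the paper nor needed. Writing $x = (1\ot\epsilon)\De(x)$ gives $x - \be_N(x) = \big(1\ot(\epsilon - h_N)\big)\De(x)$, and slicing with vector functionals together with the coaction-equivariance of $\pa_1,\pa_2$ (Lemmas~\ref{l:approxI} and~\ref{l:approxII}) yields the clean bound $\|x-\be_N(x)\| \leq d_q(h_N,\epsilon)\cd L_{D_q}(x)$ (Proposition~\ref{p:approxIII}). Since $(C(S_q^2),L_{D_q})$ is already known to be a compact quantum metric space (Theorem~\ref{t:specmetpod}), the Monge--Kantorovi\v{c} metric metrises the weak$^*$ topology, so $d_q(h_N,\epsilon)\to 0$ follows from the elementary computation $h_N\big((a^*)^k a^k\big) = \inn{N+1}/\inn{N+k+1} \to 1$ (Proposition~\ref{prop:convergence-to-counit}); no quantitative decay of matrix coefficients is ever invoked, and for fixed $q$ the argument is entirely soft. (The hard uniform-in-$q$ estimates you defer to Theorem~A are indeed handled separately in the paper, via the commutative subalgebra $C^*(A,1)$ and the explicit metric on the spectrum of $A$.)
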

For $q=1$, Theorem \ref{mainthm:B} provides a variation of Rieffel's result \cite[Theorem 3.2]{Rie:MSG}, in that it shows that the finite dimensional quantum metric spaces $F_N^1$ converge to the classical round 2-sphere as $N$ tends to infinity.   Along the way, we also prove (see Proposition \ref{prop:quantum-fuzzy-to-classical-fuzzy}) that  for fixed $N\in \nn$, the 1-parameter family $(F_q^N)_{q\in (0,1]}$ of compact quantum metric spaces vary continuously in the quantum Gromov-Hausdorff distance. \\

The rest of the paper is structured as follows: in Section \ref{sec:prelim} we give a detailed introduction to quantum $SU(2)$, the noncommutative geometry of the standard Podle{\'s} sphere and compact quantum metric spaces, while Section  \ref{sec:quantum-fuzzy-spheres} is devoted  to introducing the quantised fuzzy spheres mentioned above. In Section \ref{sec:qGH-convergence} we  carry out the main analysis and prove our convergence results.

\subsection{{Acknowledgements}}
The authors gratefully acknowledge the financial support from  the Independent Research Fund Denmark through grant no.~9040-00107B  and 7014-00145B. Furthermore, they would like to thank Marc Rieffel for pointing out the reference \cite{Sain:thesis}, and the anonymous referees for their careful reading of the manuscript. 

\section{Preliminaries}\label{sec:prelim}

\subsection{Quantum $SU(2)$}
Let us fix a $q \in (0,1]$. We consider the universal unital $C^*$-algebra $C(SU_q(2))$ with two generators $a$ and $b$ subject to the relations
\[
\begin{split}
& ba = q ab \quad b^* a = q ab^* \quad bb^* = b^* b \\
& a^* a + q^2 bb^* = 1 = aa^* + bb^* .
\end{split}
\]
This unital $C^*$-algebra is referred to as \emph{quantum $SU(2)$} and was introduced by Woronowicz in \cite{Wor:UAC}. Notice here that we are conforming with the notation from \cite{AgKa:PSM,DaSi:DSP} which is also known as Majid's lexicographic convention, see \cite{Maj:NRS}. With these conventions the fundamental corepresentation unitary takes the form
\[
u = \ma{cc}{ a^* & - qb \\ b^* & a } .
\]
We let $\C O(SU_q(2)) \su C(SU_q(2))$ denote the unital $*$-subalgebra generated by $a$ and $b$ and refer to this unital $*$-subalgebra as the \emph{coordinate algebra}. 
The coordinate algebra can be given the structure of a Hopf $*$-algebra where the coproduct $\De \colon \C O(SU_q(2)) \to \C O(SU_q(2)) \ot \C O(SU_q(2))$,  antipode $S \colon \C O(SU_q(2)) \to \C O(SU_q(2))$ and  counit $\epsilon \colon \C O(SU_q(2)) \to \cc$ are defined on the fundamental unitary by
\[
\De(u) = u \otimes u, \quad S(u) = u^* \quad \T{and} \quad \epsilon(u) = \ma{cc}{1 & 0 \\ 0 & 1} .
\]
For $q \neq 1$,  we  also consider the universal unital $*$-algebra $\C U_q( \G{su}(2))$ with generators $e,f,k$ satisfying the relations
\[
k k^{-1} = 1 = k^{-1} k  \, \, , \, \, \, ek = qke \, \, , \, \, \, kf = qfk \, \, \T{ and } \, \, \, \frac{k^2 - k^{-2}}{q - q^{-1}} = fe - ef
\]
and with involution defined by $e^* = f$, $f^* = e$ and $k^* = k$. We refer to this unital $*$-algebra as the \emph{quantum enveloping algebra}. The quantum enveloping algebra also becomes a Hopf $*$-algebra with comultiplication, antipode and counit determined by

\begin{align} \label{eq:envhopf}
 \De(e) &= e \ot k + k^{-1} \ot e &  S(e) &= -q^{-1}e &  \epsilon(e) &= 0 \notag  \\
 \De(f) &= f \ot k + k^{-1} \ot f &   S(f) &= - qf &  \epsilon(f) &= 0   \\
\De(k) &= k \ot k&   S(k) &= k^{-1} &  \epsilon(k) &= 1   \notag
\end{align}
We are here again conforming with the notations from \cite{DaSi:DSP}. The quantum enveloping algebra $\C U_q(\G{su}(2))$ is seen to be isomorphic,  as a Hopf algebra,  to  $\E{\u{U}}_q(\T{sl}_2)$ with generators $E,F,K$ from Klimyk and Schm\"udgen \cite[Chapter 3]{KlSc:QGR}, by using the dictionary $e \mapsto F$, $f \mapsto E$, $k \mapsto K$. For $q = 1$, we furthermore consider the \emph{universal enveloping Lie algebra} $\C U(\G{su}(2))$ with generators $e,f,h$ satisfying the relations
\[
[h,e] = -2e \quad [h,f] = 2f \quad [f,e] = h 
\]
and with involution defined by $e^* = f$, $f^* = e$ and $h^* = h$. It too becomes a Hopf $*$-algebra with comultiplication, antipode and counit given by

\begin{align}\label{eq:envlie}
 \De(e) &= e \ot 1 + 1 \ot e  &  S(e) &= - e  &  \epsilon(e) &= 0 \notag  \\
 \De(f) &= f \ot 1 + 1 \ot f&   S(f) &= - f  &  \epsilon(f) &= 0   \\
\De(h) &= h \ot 1 + 1 \ot h &   S(h) &= - h &  \epsilon(h) &= 0   \notag
\end{align}
Notice that $\C O(SU_q(2))$ agrees with classical coordinate algebra $\C O(SU(2))$ for $q = 1$. However, for the quantum enveloping algebra the relationship is slightly more subtle: we obtain the classical universal enveloping Lie algebra by formally putting $h := 2 \log(k)/\log(q)$ so that $k = e^{\log(q) h/2}$ and then letting $\log(q)$ tend to zero. For more information on these matters, we refer to \cite[Section 3.1.3]{KlSc:QGR}. In order to unify our notation in the rest of the paper we apply the convention that $k = 1 \in \C U(\G{su}(2)) =: \C U_1(\G{su}(2))$. \\

The algebras $ \C O(SU_q(2))$  and $\C U_q(\G{su}(2))$ are linked by  a non-degenerate dual pairing of Hopf $*$-algebras $\inn{\cd,\cd} \colon \C U_q(\G{su}(2)) \ti \C O(SU_q(2)) \to \cc$, which for $q \neq 1$ is given by
\begin{align*}
\inn{k,a} &= q^{1/2}  &    \inn{e,a} &= 0 & \inn{f,a} &=0  \\
\inn{k,a^*} & = q^{-1/2}  &  \inn{e,a^*}&=0  &  \inn{f,a^*} &=0   \\
 \inn{k,b} & = 0 &    \inn{e,b} &= -q^{-1} & \inn{f,b} &=0   \\
\inn{k,b^*} &=0  &  \inn{e,b^*} &=0 & \inn{f,b^*} &= 1  
\end{align*}
In the case where $q = 1$, this pairing is determined by
\begin{align*}
\inn{h,a} &= 1 &    \inn{e,a} &= 0 & \inn{f,a} &=0  \\
\inn{h,a^*} & = -1  &  \inn{e,a^*}&=0  &  \inn{f,a^*} &=0   \\
 \inn{h,b} & = 0 &    \inn{e,b} &= -1 & \inn{f,b} &=0   \\
\inn{h,b^*} &=0  &  \inn{e,b^*} &=0 & \inn{f,b^*} &= 1
\end{align*}
See \cite[Chapter 4, Theorem 21]{KlSc:QGR} for more details. \\

The above dual pairing of Hopf $*$-algebras yields a left and a right action of $\C U_q(\G{su}(2))$ on $\C O(SU_q(2))$. For each $\eta \in \C U_q(\G{su}(2))$ the corresponding (linear) endomorphisms of $\C O(SU_q(2))$ are defined by
\[
\de_\eta(x) := (\inn{\eta,\cd} \ot 1)\De(x) \qquad \T{ and } \qquad \pa_\eta(x) := (1 \ot \inn{\eta,\cd})\De(x), 
\]
respectively. \\

As it turns out, we shall need a rather detailed description of the irreducible  representations of $\C U_q(\G{su}(2))$, and to this end the following notation is convenient:
 for  $q \in (0,1]$ and $n \in \nn$, we define
\[
\inn{n} := \sum_{m = 0}^{n-1} q^{2m} . 
\]
We also put $\inn{0} := 0$. For $q=1$ we of course have $\inn{n}=n$, and for $q \neq 1$, the relationship with the usual $q$-integers $[n] := \tfrac{q^n - q^{-n}}{q - q^{-1}}$ is given by $\inn{n} = q^{n-1} [n]$. \\

For each $n \in \nn_0$ we have an irreducible $*$-representation of $\C U_q(\G{su}(2))$ on the Hilbert space $\cc^{n+1}$ with standard orthonormal basis $\{e_j\}_{j = 0}^n$. This irreducible $*$-representation is given on generators by
\[
\begin{split}
\si_n(k)(e_j) & = q^{j-n/2} \cd e_j \\
 \si_n(e)(e_j) &=  q^{\frac{1 - n}{2}} \sqrt{ \inn{n - j +1} \inn{j} } \cd e_{j-1} \\ 
\si_n(f)(e_j) & = q^{\frac{1-n}{2}} \sqrt{ \inn{n-j} \inn{j+1} } \cd e_{j + 1} 
\end{split}
\]
in the case where $q \neq 1$ and by
\[
\begin{split}
\si_n(h)(e_j) & = (2j -n) \cd e_j \\
\si_n(e)(e_j) &= \sqrt{ (n - j + 1) j } \cd e_{j-1} \\
\si_n(f)(e_j) & = \sqrt{ (n-j) (j+1) } \cd e_{j + 1} 
\end{split}
\]
in the case where $q = 1$, see \cite[Chapter 3, Theorem 13]{KlSc:QGR}. 
The above sequence of irreducible $*$-representations together with the non-degenerate pairing $\inn{\cd,\cd} \colon \C U_q(\G{su}(2)) \ti \C O(SU_q(2))\to \cc$ gives rise to a complete set of irreducible corepresentation unitaries $u^n \in \mathbb{M}_{n+1}( \C O(SU_q(2)))$, $n \in \nn_0$. Indeed, the entries in $u^n$ are characterised by the identity
\[
\si_n(\eta)(e_j) = \sum_{i = 0}^n \inn{ \eta, u^n_{ij}} \cd e_i ,
\]
which holds for all $\eta \in \C U_q(\G{su}(2))$ and all $j \in \{0,1,\ldots,n\}$, see \cite[Chapter 4, Proposition 16 \& 19]{KlSc:QGR} for this. We record that $u^0 = 1$ and $u^1 = u$. Notice here that we are applying a different convention than Klimyk and Schm\"udgen \cite{KlSc:QGR} who denote the unitary corepresentations by $\{t^l\}_{l \in \frac{1}{2} \nn_0}$. The relationship with our notation can be summarised by the identities $u^n_{ij} = t^{n/2}_{i-n/2,j-n/2}$ for $n \in \nn_0$ and $i,j \in \{0,1,\ldots,n\}$. \\

When $q \neq 1$, we obtain from the definition of the irreducible corepresentation unitaries that we have the following formulae
\begin{equation}\label{eq:exppai}
\begin{split}
\inn{k,u^n_{ij}} & = \de_{ij} \cd q^{j -n/2} \\
 \inn{e,u^n_{ij}} &= \de_{i,j - 1} \cd q^{\frac{1 - n}{2}} \sqrt{ \inn{n - j +1} \inn{j} } \\
\inn{f,u^n_{ij}} & = \de_{i,j + 1} \cd q^{\frac{1-n}{2}} \sqrt{ \inn{n-j} \inn{j+1} } 
\end{split}
\end{equation}
describing the pairing between the entries of the unitaries and the generators for $\C U_q(\G{su}(2))$. For $q = 1$, we have the same formulae for $\inn{e, u^n_{ij}}$ and $\inn{f,u^n_{ij}}$ but we moreover have that $\inn{h,u^n_{ij}} = \de_{ij} \cd (2j - n)$.\\

The left multiplication of the generators of $\C O(SU_q(2))$ on the entries of the irreducible unitary corepresentations $u^n$ are computed explicitly here below, using the convention that $u_{ij}^n:=0$ if $n < 0$ or $(i,j) \notin \{0,1,\ldots,n\}^2$: 
\begin{equation}\label{eq:leftmult}
\begin{split}
a^* \cd u^n_{ij} & = q^{i + j} \tfrac{ \sqrt{\inn{n - i + 1} \inn{n-j+1}}}{ \inn{n+1}} \cd u^{n+1}_{ij}
+ \tfrac{\sqrt{\inn{i} \inn{j}}}{\inn{n+1}} \cd u^{n-1}_{i-1,j-1} \\
b^* \cd u^n_{ij} & = q^j \tfrac{ \sqrt{\inn{i+1}\inn{n-j + 1}}}{\inn{n+1}} \cd u^{n+1}_{i+1,j}
- q^{i+1} \tfrac{ \sqrt{\inn{n-i} \inn{j}}}{\inn{n+1}} \cd u^{n-1}_{i,j-1} \\
a \cd u^n_{ij} & = \tfrac{ \sqrt{\inn{i + 1} \inn{j+1}}}{ \inn{n+1}} \cd u^{n+1}_{i+1,j+1}
+ q^{i+j+2}\tfrac{\sqrt{\inn{n-i} \inn{n-j}}}{\inn{n+1}} \cd u^{n-1}_{ij} \\
b \cd u^n_{ij} & = 
- q^{i-1}\tfrac{\sqrt{\inn{j+1} \inn{n-i +1}}}{\inn{n+1}} \cd u^{n+1}_{i,j+1}
+ q^j \tfrac{ \sqrt{\inn{n -j} \inn{i}}}{ \inn{n+1}} \cd u^{n-1}_{i-1,j} .
\end{split}
\end{equation}
These formulae can be derived from the $q$-Clebsch-Gordan coefficients and we refer the reader to \cite[Section 3]{DLSSV:DOS} and \cite[Chapter 3.4]{KlSc:QGR} for more information on these matters.\\

The Haar state on the $C^*$-completion $h \colon C(SU_q(2)) \to \cc$ is determined by the identities
\[
h(1) =1 \quad \T{and} \quad  h(u^n_{ij}) = 0,
\]
for all $n \in \nn$ and all $i,j \in \{0,1,\ldots,n\}$, see \cite[Chapter 4, Equation (50)]{KlSc:QGR}.  The Haar state is a twisted trace on $\C O(SU_q(2))$ with respect to the algebra automorphism $\nu \colon \C O(SU_q(2)) \to \C O(SU_q(2))$, which on the matrix units is given by
\begin{align}\label{eq:modular-function}
\nu(u^n_{ij}) =  q^{2(n - i - j)}   \cd u^n_{ij}.
\end{align}
That is,  we have that 
\begin{align}\label{eq:modular}
h(x y) = h(\nu(y) x)
\end{align}
for all $x,y \in \C O(SU_q(2))$, see \cite[Chapter 4, Proposition 15]{KlSc:QGR}. The Haar state is faithful and we let $L^2(SU_q(2))$ denote the Hilbert space completion of the $C^*$-algebra $C(SU_q(2))$ with respect to the induced inner product
\[
\inn{x,y} := h(x^* y), \quad x,y \in C(SU_q(2)) .
\]
The corresponding GNS-representation is denoted by
\[
\rho \colon C(SU_q(2)) \to \B B( L^2(SU_q(2))) .
\]
The entries from the irreducible unitary corepresentations $u^n_{ij}$ yield an orthogonal basis for $L^2(SU_q(2))$. Their norms are determined by
\begin{equation}\label{eq:haarmatrixI}
\inn{u^n_{ij},u^n_{ij}} = h( (u^n_{ij})^* u^n_{ij}) = \frac{q^{2(n-i)}}{\inn{n + 1} } ,
\end{equation}
see \cite[Chapter 4, Theorem 17]{KlSc:QGR}. It is also convenient to record that
\begin{equation}\label{eq:haarmatrixII}
\inn{ (u^n_{ij})^* , (u^n_{ij})^*} = h( u^n_{ij} (u^n_{ij})^*) = \frac{q^{2j}}{\inn{n + 1} } .
\end{equation}
Finally, we remark that for each $\eta \in \C U_q(\G{su}(2))$ one has $h\circ \de_\eta=h\circ \pa_\eta=\eta(1)\cdot h$ which follows directly from the bi-invariance of the Haar state.


\subsection{The D\k{a}browski-Sitarz spectral triple}
In the previous section we saw that the dual pairing gives rise to an action of $\C U_q(\G{su}(2))$ on  $\C O(SU_q(2))$ by linear endomorphisms, so in particular we obtain three linear maps
\[
\pa_e,  \pa_f, \pa_k \colon \C O(SU_q(2)) \to \C O(SU_q(2)) 
\]
from the generators $e,f,k \in \C U_q(\G{su}(2))$ of the quantum enveloping algebra. Using the pairing between  $\C U_q(\G{su}(2))$ and  $\C O(SU_q(2))$ together with the formulas  \eqref{eq:envhopf} one sees that $\pa_k$ is an algebra automorphism  and that $\pa_e$ and $\pa_f$ are twisted derivations, in the sense that
\begin{equation}\label{eq:derder}
\begin{split}
\pa_e(x \cd y) & = \pa_e(x) \pa_k(y) + \pa_k^{-1}(x) \pa_e(y), \\
\pa_f(x \cd y) & = \pa_f(x) \pa_k(y) + \pa_k^{-1}(x) \pa_f(y)
\end{split}
\end{equation}
for all $x,y \in \C O(SU_q(2))$; i.e.~the twist is determined by the algebra automorphism $\pa_k$. The behaviour of our three operations with respect to the involution on $\C O(SU_q(2))$ is also determined by \eqref{eq:envhopf} and the fact that we have a dual pairing of Hopf $*$-algebras:
\begin{equation}\label{eq:derstar}
\pa_e(x^*) = - q^{-1} \cd \pa_f(x)^* \, \, , \, \, \, \pa_f(x^*) = -q \pa_e(x)^* \, \, \T{and} \, \, \, \pa_k(x^*) = \pa_k^{-1}(x)^* 
\end{equation}
for all $x \in \C O(SU_q(2))$. For $q = 1$ we emphasise that our conventions imply that $\pa_k = \pa_1 = \T{id} \colon \C O(SU(2)) \to \C O(SU(2))$. In this case, both $\pa_e$ and $\pa_f$ are simply derivations on $\C O(SU(2))$, but we also have a third interesting derivation namely $\pa_h \colon \C O(SU(2)) \to \C O(SU(2))$ coming from the third generator $h \in \C U(\G{su}(2))$. The interaction between $\pa_h$ and the involution is encoded by the formula $\pa_h(x^*) = - \pa_h(x)^*$. \\
It is convenient to specify the explicit formulae
\begin{align}\label{eq:derexp}
 \pa_k(a) &= q^{1/2} a &   \pa_e(a) &= b^* & \pa_f(a) &= 0  \notag \\
 \pa_k(a^*) &= q^{-1/2} a^* &  \pa_e(a^*) &= 0 & \pa_f(a^*) &= -q b\\
 \pa_k(b) &= q^{1/2} b &   \pa_e(b) &= -q^{-1} a^* & \pa_f(b)&=0 \notag\\
\pa_k(b^*) &= q^{-1/2} b^* &   \pa_e(b^*)&=0& \pa_f(b^*) &= a \notag 
\end{align}
explaining the behaviour of the algebra automorphism $\pa_k$ and the two twisted derivations $\pa_e$ and $\pa_f$ on the generators 
for the coordinate algebra $\C O(SU_q(2))$. For $q = 1$, our extra derivation $\pa_h \colon \C O(SU(2)) \to \C O(SU(2))$ is given explicitly on the generators by
\[
\pa_h(a) = a \, \, , \, \, \, \pa_h(b) = b \, \, , \, \, \, \pa_h(a^*) = - a^* \, \, \, \T{and} \, \, \, \pa_h(b^*) = - b^* .
\]
For each $n \in \zz$, we let $\C A_n \su \C O(SU_q(2))$ denote the $n$'th spectral subspace coming from the strongly continuous circle action $\si_L \colon S^1 \ti \C O(SU_q(2)) \to \C O(SU_q(2))$ determined on the generators by $(z,a) \mapsto z \cd a$ and $(z,b) \mapsto z \cd b$. Thus, we let
\[
\C A_n := \big\{ x \in \C O(SU_q(2)) \mid \si_L(z,x) = z^n \cd x \, \, , \, \, \, \forall z \in S^1 \big\} .
\]
In particular, we have the fixed point algebra $\C A_0 \su \C O(SU_q(2))$, which is referred to as the \emph{coordinate algebra} for the standard \emph{Podle\'s sphere} and we apply the notation
\[
\C O(S_q^2) := \C A_0 = \big\{ x \in \C O(SU_q(2)) \mid \si_L(z,x) = x \, \, , \, \, \, \forall z \in S^1 \big\} .
\] 
The \emph{standard Podle\'s sphere} is defined as the $C^*$-completion of $\C O(S_q^2)$ using the $C^*$-norm inherited from $C(SU_q(2))$ and we apply the notation $C(S_q^2) \su C(SU_q(2))$ for this unital $C^*$-algebra. As the name suggests, the standard Podle\'s sphere was introduced by Podle\'s in \cite{Pod:QS} together with a whole range of ``non-standard'' Podle\'s spheres which we are not considering here.
We shall also refer to the (standard) Podle{\'s} sphere as the \emph{quantised 2-sphere} or the \emph{$q$-deformed 2-sphere}, whenever linguistically convenient. \\
The coordinate algebra $\C O(S_q^2)$ is generated by the elements
\[
A := bb^* \quad B = ab^* \quad B^* = ba^*
\]
and the following set 
\[
\big\{ A^i B^j, A^i (B^*)^k \mid i,j \in \nn_0 \, , \, \, k \in \nn \big\}
\]
constitutes a vector space basis for this coordinate algebra, see \cite[Theorem 1.2]{Wor:UAC}. For $q \neq 1$, it therefore follows that $\pa_k$ fixes $\C O(S_q^2)$, and another application of   \cite[Theorem 1.2]{Wor:UAC}   shows that this is actually an alternative description of the coordinate algebra:
\[
\C O(S_q^2) = \big\{ x \in \C O(SU_q(2)) \mid \pa_k(x) = x \big\} .
\]
In terms of the irreducible unitary corepresentations $\{u^n\}_{n = 0}^\infty$, the coordinate algebra $\C O(S_q^2)$ can be described as
\begin{equation}\label{eq:midcol}
\T{span}\big\{ u^{2m}_{im} \mid m \in \nn_0 \, , \,\, i \in \{0,1,\ldots,2m\} \big\} .
\end{equation}
Indeed, one may use the description of the pairing with $k \in \C U_q(\G{su}(2))$ from \eqref{eq:exppai} to obtain the formula $\pa_k(u^n_{ij}) = q^{j -n/2} u^n_{ij}$ for all $n \in \nn_0$ and all $i,j \in \{0,1,\ldots,n\}$. \\

We let $H_n$ denote the Hilbert space completion of $\C A_n$ with respect to the inner product inherited from $L^2(SU_q(2))$ and we put $H_+ := H_1$ and $H_- := H_{-1}$. We consider the direct sum $H_+ \op H_-$ as a $\zz/2\zz$-graded Hilbert space with grading operator $\ga = \ma{cc}{1 & 0 \\ 0 & -1}$. The restriction of the GNS-representation $\rho \colon C(SU_q(2)) \to \B B\big(L^2(SU_q(2))\big)$ to the standard Podle\'s sphere then provides us with an injective, even $*$-homomorphism
\[
\pi \colon C(S_q^2) \to \B B(H_+ \op H_-) \quad \T{ given by } \quad \pi(x) := \ma{cc}{\rho(x)\vert_{H_+} & 0 \\ 0 & \rho(x)\vert_{H_{-}}} .
\]
The definition of the circle action together with the identities in \eqref{eq:derder} and \eqref{eq:derexp} entail that
\begin{equation}\label{eq:dercirc}
\si_L(z, \pa_e(x)) = z^{-2} \pa_e\big(  \si_L(z,x) \big) \quad \T{and} \quad
\si_L(z, \pa_f(x)) = z^2 \pa_f\big( \si_L(z,x) \big)
\end{equation}
for all $z \in S^1$ and $x \in \C O(SU_q(2))$. In particular, we obtain two unbounded operators
\[
\C E \colon \C A_1 \to H_- \quad \T{and} \quad \C F \colon \C A_{-1} \to H_+
\]
agreeing with the restrictions $\pa_e \colon \C A_1 \to \C A_{-1}$ and $\pa_f \colon \C A_{-1} \to \C A_1$ followed by the relevant inclusions. An application of the identities
\[
h(\pa_e(x) )  = 0 = h(\pa_f(x)) \quad x \in \C O(SU_q(2))
\]
together with the identities in \eqref{eq:derder} and \eqref{eq:derstar} shows that $\C F \su \C E^*$ and $\C E \su \C F^*$. We let
\[
E \colon \T{Dom}(E) \to L^2(SU_q(2)) \quad \T{and} \quad F \colon \T{Dom}(F) \to L^2(SU_q(2)) 
\]
denote the closures of $\C E$ and $\C F$, respectively.  The $q$-deformed \emph{Dirac operator} $D_q \colon \T{Dom}(D_q) \to H_+ \op H_-$ is the odd unbounded operator given by
\[
D_q := \ma{cc}{0 & F \\ E & 0} \quad \T{with} \quad \T{Dom}(D_q) := \T{Dom}(E) \op \T{Dom}(F),
\]
and the main result from \cite{DaSi:DSP} is the following:
\begin{thm}\cite[Theorem 8]{DaSi:DSP}\label{t:spectrip}
The triple $(\C O(S_q^2), H_+ \op H_-, D_q)$ is an even spectral triple.
\end{thm}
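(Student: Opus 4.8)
The plan is to check the three defining properties of an even spectral triple: self-adjointness of $D_q$, compactness of its resolvent, and boundedness of the commutators $[D_q,\pi(x)]$ for $x \in \C O(S_q^2)$. Compatibility with the grading is automatic, since $\pi(x)$ is block diagonal and hence commutes with $\ga$, while $D_q$ is off-diagonal and hence anticommutes with $\ga$. The engine driving everything is the orthogonal basis of $H_+ \op H_-$ provided by the corepresentation matrix coefficients, with respect to which $D_q$ splits into finite-dimensional blocks.

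First I would make the spectral subspaces explicit. The identity $\pa_k(u^n_{ij}) = q^{j-n/2}u^n_{ij}$ shows that $u^n_{ij}$ carries circle-degree $2j-n$, so $\C A_1$ is the span of those $u^n_{ij}$ with $2j - n = 1$ and $\C A_{-1}$ the span of those with $2j-n=-1$; in either case $n$ is odd and the column index is forced to be $(n\pm 1)/2$. Hence for each odd $n$ and each row index $i \in \{0,1,\ldots,n\}$ there is precisely one normalised vector $\xi^+_{n,i} \in H_+$ and one $\xi^-_{n,i} \in H_-$, obtained from $u^n_{i,(n+1)/2}$ and $u^n_{i,(n-1)/2}$ by multiplying with the constant $\sqrt{\inn{n+1}}\,q^{i-n}$ read off from \eqref{eq:haarmatrixI}; crucially this constant is independent of the column. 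Inserting the pairing formulae \eqref{eq:exppai} into $\pa_\eta(u^n_{ij}) = \sum_k u^n_{ik}\inn{\eta,u^n_{kj}}$ shows that $\pa_e$ lowers and $\pa_f$ raises the column index by one, and a direct computation then gives $\C E(\xi^+_{n,i}) = \la_n\,\xi^-_{n,i}$ and $\C F(\xi^-_{n,i}) = \la_n\,\xi^+_{n,i}$ with the common coefficient $\la_n := q^{(1-n)/2}\inn{(n+1)/2}$, which does not depend on $i$. Consequently $D_q$ is the orthogonal direct sum over all odd $n$ and all $i$ of the self-adjoint $2\ti 2$ blocks $\ma{cc}{0 & \la_n \\ \la_n & 0}$ acting on $\T{span}\{\xi^+_{n,i},\xi^-_{n,i}\}$. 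This immediately yields self-adjointness of $D_q$ and shows that its spectrum is the discrete set $\{\pm \la_n\}$, the value $\pm\la_n$ occurring with multiplicity $n+1$.

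The compact resolvent condition then reduces to verifying that $|\la_n| \to \infty$. For $q = 1$ one has $\la_n = (n+1)/2$, and for $q \neq 1$ the identity $\inn{m} = (1-q^{2m})/(1-q^2)$ gives $\la_n = \tfrac{q^{1/2}}{1-q^2}\big(q^{-n/2} - q^{n/2+1}\big)$, which tends to infinity because $0 < q < 1$. Together with the finiteness of all multiplicities this guarantees that $(D_q \pm i)^{-1}$ is compact.

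Finally I would address the bounded commutator condition, which is where the deformed geometry really enters. Since every $x \in \C O(S_q^2)$ has circle-degree $0$, left multiplication by $x$ preserves each $\C A_n$, so $\pi(x)$ is even and maps the core $\C A_1 \op \C A_{-1}$ into itself. On this core I would evaluate $[D_q,\pi(x)]$ by means of the twisted Leibniz rule \eqref{eq:derder}: for $y \in \C A_1$ it gives $\pa_e(xy) - x\,\pa_e(y) = \pa_e(x)\,\pa_k(y)$, and since $x \in \C O(S_q^2)$ satisfies $\pa_k(x) = x = \pa_k^{-1}(x)$ while $\pa_k$ acts on $\C A_1$ as the scalar $q^{1/2}$, this is left multiplication by $q^{1/2}\pa_e(x)$; similarly on $\C A_{-1}$ one obtains left multiplication by $q^{-1/2}\pa_f(x)$. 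Thus $[D_q,\pi(x)]$ agrees on the core with the off-diagonal operator whose entries are $\rho\big(q^{-1/2}\pa_f(x)\big)$ and $\rho\big(q^{1/2}\pa_e(x)\big)$, and these are bounded because $\rho$ is a $*$-representation of $C(SU_q(2))$ and $\pa_e(x),\pa_f(x) \in \C O(SU_q(2))$. The main obstacle is the bookkeeping in the diagonalisation step; once the explicit value of $\la_n$ and the scalar action of $\pa_k$ on $\C A_{\pm 1}$ are secured, self-adjointness, compactness of the resolvent, and boundedness of the commutators all follow with minimal additional effort.
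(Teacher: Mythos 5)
Your proposal is correct, and it is worth stressing that the paper itself contains no proof of this statement: Theorem \ref{t:spectrip} is imported wholesale from D\k{a}browski--Sitarz \cite[Theorem 8]{DaSi:DSP}, so your self-contained Peter--Weyl diagonalisation is genuinely a different route from what the paper does (cite) and from the original equivariance-based construction in \cite{DaSi:DSP}. I checked your computation against the conventions of this paper and it holds up: the identification of $\C A_{\pm 1}$ with the spans of $u^n_{i,(n\pm 1)/2}$ for odd $n$ follows from $\pa_k(u^n_{ij}) = q^{j-n/2}u^n_{ij}$; the column-independence of the norms in \eqref{eq:haarmatrixI} is indeed the pivotal observation that lets the normalised coefficients carry the same constant $\la_n$; and your value $\la_n = q^{(1-n)/2}\inn{(n+1)/2}$ is right --- note that via $\inn{m} = q^{m-1}[m]$ (for $q \neq 1$) it simplifies to the $q$-integer $[(n+1)/2]$, recovering the known spectrum of the D\k{a}browski--Sitarz operator, a reassuring sanity check. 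Your commutator computation likewise reproduces the identities $\pa_1(x) = q^{1/2}\rho(\pa_e(x))\vert_{H_+}$ and $\pa_2(x) = q^{-1/2}\rho(\pa_f(x))\vert_{H_-}$ that the paper records immediately after the theorem. Two steps you leave implicit are standard but should be stated in a complete write-up: (i) that the closure of a symmetric operator acting as an orthogonal direct sum of finite-dimensional self-adjoint blocks on the span of an orthonormal basis is self-adjoint, so that $D_q$, which is assembled from the closures $E$ and $F$ with $\T{Dom}(D_q) = \T{Dom}(E) \op \T{Dom}(F)$, really is the block-diagonal operator you describe; and (ii) that boundedness of $[D_q,\pi(x)]$ on the core $\C A_1 \op \C A_{-1}$, combined with the fact that $\pi(x)$ preserves this core, yields $\pi(x)\T{Dom}(D_q) \su \T{Dom}(D_q)$ and boundedness of the commutator on the full domain. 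What your route buys is an explicit spectral decomposition with multiplicities (eigenvalues $\pm\la_n$, each of multiplicity $n+1$), from which self-adjointness and compactness of the resolvent fall out simultaneously; what the original approach in \cite{DaSi:DSP} buys, and your computation does not address, is the equivariance and reality structure that pins the triple down up to a scalar, as recalled in the remark following the theorem.
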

%

The commutator with the Dirac operator $D_q \colon \T{Dom}(D_q) \to H_+ \op H_-$ induces a $*$-derivation $\pa \colon \C O(S_q^2) \to \B B(H_+ \op H_-)$, and since $D_{q}$ is odd the the $2\times 2$-matrix representing $\pa(x)$ is off-diagonal, and we denote it as follows: 
\[
\pa(x) = \ma{cc}{0 & \pa_2(x) \\ \pa_1(x) & 0}.
\]
The $*$-derivation $\pa \colon \C O(S_q^2) \to \B B(H_+ \op H_-)$ is closable since the Dirac operator $D_q$ is selfadjoint and therefore in particular closed. For $x\in \C O(S_q^2)$, an application of the twisted Leibniz rule \eqref{eq:derder} yields that $\pa_1(x)=q^{1/2}\rho(\pa_e(x))\vert_{H_+}$ and $\pa_2(x)=q^{-1/2}\rho(\pa_f(x))\vert_{H_-}$,  and in the sequel we will therefore often think of $\pa_1$ and  $\pa_2$ as the derivations 
\[
q^{1/2}\pa_e, q^{-1/2}\pa_f \colon \C O(S_q^2)\to \C O(SU_q(2))
\]
rather than their representations as bounded operators.

\begin{remark}
In \cite{DaSi:DSP} the even spectral triple on $\C O(S_q^2)$ is also equipped with an extra antilinear operator $J \colon H_+ \op H_- \to H_+ \op H_-$ (the reality operator). It is then verified that the even spectral triple is in fact both real and $\C U_q(\G{su}(2))$-equivariant and that these properties determine the spectral triple up to a non-trivial scalar $z \in \cc \sem \{0\}$. The description of the Dirac operator in terms of the dual pairing of Hopf $*$-algebras, which we are using here, can be found in \cite[Section 3]{NeTu:LFQ}.
\end{remark}

\begin{remark}\label{r:dirac}
In the case where $q = 1$, it can be proved that the direct sum of Hilbert spaces $H_+ \op H_-$ agrees with the $L^2$-sections of the spinor bundle $S_+ \op S_- \to S^2$ on the $2$-sphere. Letting $\Ga^\infty(S_+ \op S_-)$ denote the smooth sections of the spinor bundle it can moreover be verified that the unbounded selfadjoint operator $D_1 \colon \T{Dom}(D_1) \to H_+ \op H_-$ agrees with the closure of the Dirac operator $\C D : \Ga^\infty(S_+ \op S_-) \to \Ga^\infty(S_+ \op S_-)$ upon considering $\C D$ as an unbounded operator on $H_+ \op H_-$; see e.g.~\cite[Section 3.5]{Friedrich:Dirac}. For more information on the spin geometry of the $2$-sphere, we refer the reader to \cite[Chapter 9A]{GrVaFi:ENG}.
\end{remark}


\subsection{Compact quantum metric spaces}\label{subsec:cqms}
In this section we gather the necessary background material concerning compact quantum metric spaces. These are the natural noncommutative analogues of classical compact metric spaces, and were introduced by Rieffel \cite{Rie:MSS, Rie:GHD} around  the year 2000. The basic idea is that the  noncommutative counterpart to a classical metric is captured by a certain seminorm, the domain of which can be chosen in several ways, leading to slight variations of the same theory. Rieffel's original theory \cite{Rie:GHD} is formulated in the language of order unit spaces, but one may equally well take a $C^*$-algebraic setting as the point of departure \cite{Li:CQG, Li:GH-dist, Rie:MSS}. We will here present a generalisation of the $C^*$-algebraic setting and take concrete operator systems as our point of departure.\\

Let $X$ be a concrete operator system; thus, for our purposes, $X$ is a closed subspace of a specified unital $C^*$-algebra such that $X$ is stable under the adjoint operation and contains the unit. The operator system $X$ has a state space $\C S(X)$ consisting of all the positive linear functionals preserving the unit. 

\begin{dfn}\label{def:cqms}
A \emph{compact quantum metric space} is a concrete operator system $X$ equipped with a seminorm $L\colon X \to [0,\infty]$ satisfying the following:
\begin{itemize}
\item[(i)] One has $L(x)=0$ if and only if $x\in \mathbb{C}\cdot 1$.
\item[(ii)] The set $\T{Dom}(L):=\{x\in X \mid L(x)<\infty\}$ is dense in $X$ and $L$ satisfies that $L(x^*)=L(x)$ for all $x \in X$.
\item[(iii)] The function $\rho_L(\mu,\nu):=\sup\{|\mu(x)-\nu(x)| \mid L(x)\leq 1\}$ defines a metric on the state space $\C S(X)$ which metrises the weak$^*$-topology.
\end{itemize}
In this case, the seminorm $L$ is referred to as a \emph{Lip-norm} and the corresponding metric is referred to as the \emph{Monge-Kantorovi\v{c} metric}.
\end{dfn}
A compact quantum metric space $(X,L)$ has an associated order unit space $A:=\{x\in X_{\sa} \mid L(x)<\infty \}= X_{\T{sa}}\cap \T{Dom}(L)$, where both the order and the unit are inherited from the ambient unital $C^*$-algebra. In the setting of order unit spaces, the notion of a state also makes sense, and we note that the restriction map provides an identification $\C S(X)\cong \C S(A)$. Moreover, it is easy to verify that $(A,L\vert_A)$ is an \emph{order unit compact quantum metric space} in the sense of Rieffel; see \cite{Rie:MSA, Rie:MSS, Rie:GHD}.  The restriction of $L$ to $A$ defines a Monge-Kantorovi\v{c} metric on $\C S(A)$ by the obvious modification of the formula in (iii),
and the requirement that $L$ be invariant under the involution implies that the identification of state spaces $\C S(X) \cong \C S(A)$ becomes an isometry for the Monge-Kantorovi\v{c} metrics; for more details on these matters, see \cite[Section 2]{kaad-kyed}.\\


The canonical commutative example upon which the above definition is modelled, arises by considering a compact metric space $(M,d)$ and its associated $C^*$-algebra $C(M)$, which can be endowed with a seminorm by setting 
\[
L_d(f):=\sup\left\{\frac{|f(p)-f(q)|}{d(p,q)} \mid p,q\in M, p\neq q\right\}.
\]
Then $L_d(f)$ is finite exactly when $f$ is Lipschitz continuous, in which case $L_d(f)$ is the Lipschitz constant.  By results of Kantorovi\v{c} and Rubin\v{s}te\u{\i}n \cite{KaRu:FSE,KaRu:OSC}, one has that  $\rho_{L_d}$ metrises the weak$^*$-topology on $\C S(C(M))$, so that $(C(M),L_d)$ is indeed a compact quantum metric space, and  that the restriction of $\rho_{L_d}$ to $M\su \C S(C(M))$ agrees with the  metric $d$.  \\

At first glance, it might seem like a difficult task to verify that the function $\rho_L$ metrises the weak$^*$-topology, but actually this can be reduced to a compactness question as the following theorem shows.

\begin{thm}[{\cite[Theorem 1.8]{Rie:MSA}}]\label{t:totallybdd}
Let $X$ be a concrete operator system and $L\colon X\to [0,\infty]$ a seminorm satisfying (i) and (ii) from Definition \ref{def:cqms}. Then $(X,L)$ is a compact quantum metric space if and only if the image of the Lip-unit ball $\{x\in X\mid L(x)\leq 1\}$ under the quotient map $X\to X/\mathbb{C}\cdot 1$ is totally bounded for the quotient norm.
\end{thm}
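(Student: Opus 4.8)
The plan is to prove the equivalence directly, via an Arzel\`a--Ascoli argument that identifies the quotient norm on $X/\cc\cd 1$ with the oscillation of affine functions over the weak$^*$-compact state space $\C S(X)$. First I would record the implication that holds unconditionally, namely that the $\rho_L$-topology on $\C S(X)$ always refines the weak$^*$-topology: if $\rho_L(\mu_n,\mu)\to 0$ then $|\mu_n(x)-\mu(x)|\le L(x)\cd \rho_L(\mu_n,\mu)\to 0$ for every $x\in\T{Dom}(L)$ (rescale by $L(x)$ when $L(x)>0$, and use $\mu_n(x)=\mu(x)$ when $x\in\cc\cd 1$), and since $\T{Dom}(L)$ is dense in $X$ and the states are uniformly norm-bounded this forces $\mu_n\to\mu$ weak$^*$. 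Thus being a compact quantum metric space is equivalent to the \emph{reverse} inclusion of topologies together with finiteness of $\rho_L$. I would then reduce to the self-adjoint picture: from $L(x^*)=L(x)$ and subadditivity one gets $L(\T{Re}\,x),L(\T{Im}\,x)\le L(x)$, so total boundedness of the image of the full Lip-ball in $X/\cc\cd 1$ is equivalent to total boundedness of the image of the self-adjoint Lip-ball $\{x\in X_{\sa}\mid L(x)\le 1\}$, and the isometric identification $\C S(X)\cong\C S(A)$ with $A=X_{\sa}\cap\T{Dom}(L)$ lets me work entirely on the order unit space.

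The heart of the matter is the duality formula stating that for self-adjoint $x$ one has $\|\bar x\|_{X/\cc\cd 1}=\tfrac12\sup_{\mu,\nu\in\C S(X)}\big(\mu(x)-\nu(x)\big)$, i.e.\ the quotient norm equals half the oscillation over $\C S(X)$ of the affine function $\hat x\colon \mu\mapsto\mu(x)$. Consequently the diameter of $(\C S(X),\rho_L)$ is twice the radius of the image Lip-ball, and, after pinning a base point state $\mu_0$, total boundedness of the image Lip-ball in $X/\cc\cd 1$ is exactly total boundedness of the family $\{\hat x-\mu_0(x)\mid L(x)\le 1\}$ in $C(\C S(X))$. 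By Arzel\`a--Ascoli the latter is equivalent to this family being bounded and equicontinuous. Boundedness is precisely finiteness of $\rho_L$, while equicontinuity is precisely the statement that every $\rho_L$-ball contains a weak$^*$-neighbourhood, i.e.\ that the weak$^*$-topology refines the $\rho_L$-topology; combined with the unconditional implication above this makes the two topologies coincide, giving condition (iii). Conversely, if $(X,L)$ is a compact quantum metric space then $(\C S(X),\rho_L)$ is weak$^*$-compact, hence $\rho_L$-bounded and $\rho_L$-totally bounded, so the associated family of affine functions is bounded and equicontinuous and Arzel\`a--Ascoli returns total boundedness of the image Lip-ball.

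The main obstacle, and the step deserving the most care, is the duality formula and its interface with Arzel\`a--Ascoli: one must check that $x\mapsto\hat x$ is isometric from $X_{\sa}/\rr\cd 1$ onto its image in $C(\C S(X))/\rr$ (using $\|y\|=\sup_\mu|\mu(y)|$ for self-adjoint $y$ and that the norm-minimising scalar is then real), and that being ``totally bounded modulo constants'' for the oscillation seminorm coincides with being bounded and equicontinuous in the ordinary sup-norm after fixing the base point $\mu_0$. The operator-system (complex, non-self-adjoint) generality contributes no further analytic difficulty, as it is absorbed entirely by the real/imaginary-part reduction, so once the order unit case is established the theorem follows.
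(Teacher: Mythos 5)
Your argument is correct and takes essentially the same route as the source the paper cites for this result: the paper gives no proof of Theorem \ref{t:totallybdd} itself but defers to \cite[Theorem 1.8]{Rie:MSA}, whose proof is precisely your scheme --- embed $X_{\sa}/\rr\cd 1$ isometrically into $C(\C S(X))$ modulo constants via $x \mapsto \hat x$ using the half-oscillation duality formula, and apply Arzel\`a--Ascoli, with boundedness encoding finite $\rho_L$-diameter and equicontinuity encoding that the weak$^*$-topology refines the $\rho_L$-topology (the reverse refinement holding unconditionally, as you note). Your real/imaginary-part reduction correctly transports the statement from the complex operator-system setting to Rieffel's order-unit setting, which is exactly how the paper bridges the two via the isometric identification $\C S(X)\cong \C S(A)$ in Section \ref{subsec:cqms}.
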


Note, in particular, that this implies that if $X$ is finite dimensional, then any seminorm satisfying (i) and (ii) from Definition \ref{def:cqms} automatically provides $X$ with a quantum metric structure; we will use this fact repeatedly without further reference in the sequel. \\

One of the many pleasant features of the theory of compact quantum metric spaces, is that it allows for  a noncommutative analogue of the classical Gromov-Hausdorff distance between compact metric spaces \cite{gromov-groups-of-polynomial-growth-and-expanding-maps, edwards-GH-paper},  which we now recall. If  $(X,L_X)$ and $(Y,L_Y)$  are two compact quantum metric spaces, denote by $A$ and $B$ the associated order unit compact quantum metric spaces $A:=X_{\T{sa}} \cap \T{Dom}(L_X)$ and $B:=Y_{\T{sa}}\cap \T{Dom}(L_Y)$. A finite (real) seminorm $L\colon A\oplus B \to[0,\infty)$ is called \emph{admissible} if $(A\oplus B, L)$ is an order unit compact quantum metric space and if the associated quotient seminorms on $A$ and $B$ agree with $L_X\vert_A$ and $L_Y\vert_B$, respectively.  
For any such $L$, one obtains isometric embeddings of state spaces: 
\begin{align*}
(\C S(X), \rho_{L_X}) &\cong (\C S(A), \rho_{L_X\vert_A}) \hookrightarrow (\C S (A\oplus B), \rho_L) \\
 (\C S(Y), \rho_{L_Y}) &\cong (\C S(B), \rho_{L_Y\vert_B}) \hookrightarrow (\C S (A\oplus B), \rho_L) 
\end{align*}
and hence one can consider the \emph{Hausdorff distance}  $\dist_{\T{H}}^{\rho_L}(\C S(X), \C S(Y))$ (see  \cite{Hausdorff-grundzuge}). The \emph{quantum Gromov-Hausdorff distance} between $(X,L_X)$ and $(Y,L_Y)$ is then defined as
\[
\dist_{\T{Q}} ((X,L_X); (Y,L_Y)):= \inf \left\{ \dist_{\T{H}}^{\rho_L}(\C S(X), \C S(Y)) \mid L\colon A\oplus B\to [0,\infty) \T{ admissible}   \right\} .
\]
We remark that this is simply a rephrasing of Rieffels original definition from \cite{Rie:GHD}, where everything is formulated in terms of order unit spaces, in the sense that $\dist_{\T{Q}}((X,L_X); (Y,L_Y)):=\dist_{\T{Q}}((A,L_X\vert_A); (B,L_Y\vert_B))$.
In \cite{Rie:GHD}, Rieffel  showed that $\dist_{\T{Q}} $ is symmetric and satisfies the triangle inequality, and that distance zero is equivalent to the existence of a Lip-norm preserving isomorphism of order unit spaces between the (completions of the) quantum metric spaces in question, see \cite{Rie:GHD} for details on this.  
Over the past 20 years, several refinements of the quantum Gromov-Hausdorff distance have been proposed \cite{Ker:MQG, Lat:QGH, Li:CQG} for which distance zero actually implies Lip-isometric isomorphism at the $C^*$-level. A very successful such refinement is Latr{\'e}moli{\`e}re's notion of \emph{ quantum propinquity}, which has been developed in a series of influential papers \cite{Lat:AQQ, Lat:BLD, Lat:DGH, Lat:QGH}.  In the present text, however, we will only consider Rieffel's original notion, and shall therefore not elaborate further on the quantum propinquity.\\

Rieffel's definition of compact quantum metric spaces is drawing inspiration from Connes' noncommutative geometry, and the latter is therefore, not surprisingly, a source of interesting examples of compact quantum metric spaces, \cite[Chapter 6]{Con:NCG} and \cite{Con:CFH}. Concretely, if $(\C A, H, D)$ is a unital spectral triple with $\C A$ sitting as a dense unital $*$-subalgebra of the unital $C^*$-algebra $A \su \B B(H)$, then one obtains a seminorm $L_D\colon A\to [0,\infty]$ by setting
\begin{equation}\label{eq:semispec}
L_D(a):= \fork{ccc}{ \| \ov{[D,a]} \| & \T{for} & a \in \C A \\ \infty & \T{for} & a \notin \C A} .
\end{equation}

The main result of \cite{AgKa:PSM} is the following:


\begin{thm}\label{t:specmetpod}
For each $q \in (0,1)$, the seminorm $L_{D_q}$ arising from the D\k{a}browski-Sitarz spectral triple $(\C O(S_q^2),H_+ \op H_-,D_q)$ turns $C(S_q^2)$ into a compact quantum metric space. 
\end{thm}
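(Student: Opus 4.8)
The plan is to verify the three conditions of Definition \ref{def:cqms} for the pair $(C(S_q^2), L_{D_q})$, deferring the metrisation condition (iii) to Rieffel's total boundedness criterion (Theorem \ref{t:totallybdd}). Throughout I use that, since $D_q$ is odd, the matrix of $\pa(x)$ is off-diagonal, so its norm is the maximum of the two block norms, giving $L_{D_q}(x) = \max\{\|\pa_1(x)\|,\|\pa_2(x)\|\}$ with $\pa_1(x) = q^{1/2}\pa_e(x)$ and $\pa_2(x) = q^{-1/2}\pa_f(x)$, realised as left-multiplication operators $H_+ \to H_-$ and $H_- \to H_+$ by the elements $\pa_e(x),\pa_f(x) \in \C{O}(SU_q(2))$. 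Conditions (i) and (ii) are the routine ones. For (i), I would compute $\pa_e$ on the basis \eqref{eq:midcol}: from \eqref{eq:exppai} one gets $\pa_e(u^{2m}_{im}) = q^{(1-2m)/2}\sqrt{\inn{m+1}\inn{m}}\,u^{2m}_{i,m-1}$, whose coefficient is non-zero for $m \geq 1$; hence $\pa_e(x)=0$ forces $x$ to be supported on the $m=0$ summand $\cc\cdot 1$, and so $L_{D_q}(x)=0 \Leftrightarrow x\in \cc\cdot 1$. Condition (ii) holds because $\T{Dom}(L_{D_q}) \supseteq \C{O}(S_q^2)$ is dense by construction, while \eqref{eq:derstar} gives $\pa_1(x^*) = -\pa_2(x)^*$ and $\pa_2(x^*) = -\pa_1(x)^*$, so the two block norms merely swap and $L_{D_q}(x^*)=L_{D_q}(x)$.

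The substance is condition (iii). By Theorem \ref{t:totallybdd} it suffices to prove that the image of the Lip-unit ball in $C(S_q^2)/\cc\cdot 1$ is totally bounded. The organising principle is the Peter--Weyl decomposition $\C{O}(S_q^2) = \bigoplus_{m\geq 0} V_m$, where $V_m := \T{span}\{u^{2m}_{im} : 0\leq i\leq 2m\}$ is the $(2m+1)$-dimensional ``$q$-spherical-harmonic'' space of degree $m$. The computation used for (i) shows that $\pa_e$ (and symmetrically $\pa_f$) carries $V_m$ isomorphically onto a column-shifted copy of itself, scaling by the factor
\[
c_m := q^{(1-2m)/2}\sqrt{\inn{m+1}\inn{m}} .
\]
The crucial point is the asymptotics: for $q\in(0,1)$ one has $\inn{m} = \tfrac{1-q^{2m}}{1-q^2} \to \tfrac{1}{1-q^2}$, so that $c_m \sim q^{-m}$ grows exponentially. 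Thus the derivations amplify the degree-$m$ part of an element by an exponentially large factor, and the constraint $L_{D_q}(x)\leq 1$ should translate into exponential decay of the components $\|x_m\|$ of $x = \sum_m x_m$. As a first approximation, restricting the operator $\pa_1(x)$ to suitable vectors of $H_+$ and using that \eqref{eq:haarmatrixI} makes the $L^2$-norm of $u^{2m}_{ij}$ independent of the column index $j$ should yield $\|x_m\|_2 \lesssim c_m^{-1} \sim q^{m}$ whenever $L_{D_q}(x)\leq 1$.

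Upgrading this $L^2$-decay to honest $C^*$-norm decay is where the real work lies, and I expect it to be the main obstacle. The difficulty is twofold. First, the Lip-norm is built from operator norms of left-multiplication operators restricted to a single spectral subspace ($H_+$ or $H_-$) of the circle action, whereas the components $x_m$ are most transparent in $L^2(SU_q(2))$; comparing these requires controlling the various $\zz$-graded blocks of a left-multiplication operator against one another. Second, extracting the individual component $x_m$ calls for the isotypic projection $E_{2m}$ onto the degree-$2m$ corepresentation block, whose norm may grow with $m$. The saving grace, special to $q\neq 1$, is the exponential gap: the amplification factors satisfy $c_m^{-1}\sim q^{m}\to 0$ exponentially, while every relevant comparison constant and isotypic-projection norm grows at most polynomially in $m$. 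One therefore expects a clean uniform bound $\|x_m\| \leq P(m)\,q^{m}$ for a fixed polynomial $P$ over the Lip-unit ball, the exponential decay defeating all polynomial losses.

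With such exponential decay of components in hand, total boundedness is immediate. Writing $P_M := \sum_{m\leq M}(\cdot)_m$ for the truncation onto $\bigoplus_{m\leq M} V_m$, the bound above gives
\[
\sup_{L_{D_q}(x)\leq 1} \| x - P_M x\| \;\leq\; \sum_{m>M} P(m)\,q^{m} \;\xrightarrow[M\to\infty]{}\; 0 ,
\]
and in particular the Lip-ball is norm-bounded modulo $\cc\cdot 1$. Since each $P_M(\{L_{D_q}\leq 1\})$ is then a bounded subset of the finite-dimensional space $\bigoplus_{1\leq m\leq M} V_m$, it is totally bounded modulo constants, and the uniform approximation displayed above shows that the whole image of the Lip-unit ball in $C(S_q^2)/\cc\cdot 1$ is totally bounded. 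Condition (iii) follows from Theorem \ref{t:totallybdd}, completing the verification that $(C(S_q^2), L_{D_q})$ is a compact quantum metric space. The genuinely delicate steps are the two norm estimates of the third paragraph — the operator-to-$L^2$ bound isolating a single degree, and the polynomial control on the isotypic projections $E_{2m}$ — both of which hinge on the explicit multiplication formulae \eqref{eq:leftmult} and on the exponential gap that is available only for $q\neq 1$.
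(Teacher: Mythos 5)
Your reductions are fine as far as they go: conditions (i) and (ii) are verified correctly (modulo the small point that $\|\pa_1(x)\|=0$ forces $\pa_e(x)=0$ as an element, which needs the faithfulness of $\rho(\cdot)\vert_{H_+}$ on $\C A_{-2}$, easily checked via $ybb^*=0=yaa^*\Rightarrow y=0$), and the appeal to Theorem \ref{t:totallybdd} is the right framework. But the heart of your argument --- the uniform bound $\|x_m\|\leq P(m)\,q^m$ for the isotypic components over the Lip-unit ball --- is never proved, only conjectured (``should yield'', ``one therefore expects''), and the one quantitative principle you offer in its support, namely that ``every relevant comparison constant and isotypic-projection norm grows at most polynomially in $m$'', is false. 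The sup-versus-$L^2$ comparison on $V_m$ is exponentially bad: the middle entry $u^{2m}_{mm}$ has operator norm exactly $1$ (it is an entry of a unitary, and the counit is a character with $\epsilon(u^{2m}_{mm})=1$), whereas by \eqref{eq:haarmatrixI} its $L^2$-norm is $q^m/\sqrt{\inn{2m+1}}\approx q^m\sqrt{1-q^2}$. Hence passing from your (plausible) $L^2$-estimate $\|x_m\|_2\lesssim c_m^{-1}\sim q^m$ to a $C^*$-norm estimate loses a factor of order $q^{-m}$, which cancels the exponential gain from $c_m$ exactly: the chain as you set it up yields only $\|x_m\|\leq O(1)$, the tail sums $\sum_{m>M}\|x_m\|$ need not tend to zero, and the truncation argument collapses. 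Whether the claimed decay is even true depends on delicate entrywise sup-norm asymptotics of the coefficients $u^{2m}_{i,m\pm 1}$ (for instance, whether $\|u^{2m}_{m,m-1}\|$ is of order $1$ or of order $q^m$, which decides whether $L_{D_q}(u^{2m}_{mm})$ grows like $q^{-m}$ or stays bounded) --- and this is precisely the analysis your sketch defers. So the proposal is a plan with a genuine gap at its central step, not a proof.

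For comparison: the present paper does not prove Theorem \ref{t:specmetpod} at all --- it is quoted from \cite{AgKa:PSM} --- and the proof there runs along genuinely different lines from your Peter--Weyl decomposition. Rather than grading by corepresentation degree $m$, one decomposes $\C O(S_q^2)$ into the spectral bands of the second circle action $\si_R$, i.e.\ into pieces of the form $f(A)B^n$ and $f(A)(B^*)^n$, reduces the fixed-point part to ordinary Lipschitz analysis on the quantised interval $X_q = \T{Sp}(A)$ with the metric \eqref{eq:metric-formula} (exactly the structure recalled in Section \ref{ss:conmet} via \cite{GKK:QI}, where Arzel\`a--Ascoli gives total boundedness band by band), and obtains geometric decay in the \emph{band} index from honest sup-norm estimates: here the exponential smallness is visible directly in the $C^*$-norm, e.g.\ the reordering $B^n = q^{n(n-1)/2}a^n(b^*)^n$ shows $\|B^n\|\approx q^{n(n-1)/2}$ while the derivatives carry comparatively large coefficients, so no lossy $L^2$-to-sup conversion is ever needed. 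That is the structural advantage of the band decomposition over the isotypic one, and it is what your route would have to replicate by hand through entrywise estimates on the $u^{2m}_{ij}$.
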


\begin{remark}\label{r:lipspec}
For a unital spectral triple $(\C A,H,D)$ there is also a \emph{maximal} domain for the associated seminorm called the \emph{Lipschitz algebra} and denoted by $A^{\Lip} \su A$. This dense unital $*$-subalgebra consists of all elements  $a \in A$ satisfying that $a(\T{Dom}(D)) \su \T{Dom}(D)$ and that $[D,a] \colon \T{Dom}(D) \to H$ extends to a bounded operator on $H$. The corresponding seminorm is denoted by $L_D^{\T{max}}$. In general, the Lip-algebra does not agree with the domain of the closure of the derivation $\pa \colon \C A \to \B B(H)$ given by $\pa(a) := \ov{[D,a]}$. As a consequence, we do not know whether it holds that, if $(A,L_D)$ a compact quantum metric space, then $(A,L_D^{\T{max}})$ is a compact quantum metric space. The converse can, however, be verified immediately by an application of Theorem \ref{t:totallybdd}. It is an interesting problem to investigate the relationship between $(A,L_D)$ and $(A,L_D^{\T{max}})$ -- in particular whether the quantum Gromov-Hausdorff distance between the two is equal to zero. 
In \cite[Theorem 8.3]{AgKa:PSM} it is actually proved (for $q \in (0,1)$)  that $(C(S_q^2),L_{D_q}^{\T{max}})$ is a compact quantum metric space and in \cite[Theorem A]{AKK:DistZero} we show that the quantum Gromov-Hausdorff distance between $(C(S_q^2),L_{D_q})$ and $(C(S_q^2),L_{D_q}^{\T{max}})$ is indeed zero. Thus, the main convergence result in Theorem \ref{mainthm:A} holds true also when the seminorm $L_{D_q}$ is replaced by its maximal counterpart $L_{D_q}^{\T{max}}$.
\end{remark}

\begin{remark}
In the case where $q = 1$ we have discussed earlier in Remark \ref{r:dirac} that the unbounded selfadjoint operator $D_1 \colon \T{Dom}(D_1) \to H_+ \op H_-$ agrees with the closure of the Dirac operator $\C D \colon \Ga^\infty(S_+ \op S_-) \to H_+ \op H_-$ coming from the spin geometry of the $2$-sphere. We therefore know from \cite[Proposition 1]{Con:CFH} that $L_{D_1}(f)$ agrees with the Lipschitz constant of $f$ associated with the round metric on the $2$-sphere when $f \in \C O(S^2)$ (and outside of $\C O(S^2)$,    $L_{D_q}$ takes the value $\infty$, by definition). In particular, we obtain that the Monge-Kantorovi\v{c} metric coming from the seminorm $L_{D_1} \colon C(S^2) \to [0,\infty]$ agrees with the Monge-Kantorovi\v{c} metric coming from the round metric on the $2$-sphere. We may thus conclude that $(C(S^2),L_{D_1})$ is a compact quantum metric space as well.
\end{remark}

In the recent papers \cite{walter-connes:truncations, walter:GH-convergence} the notion of a unital spectral triple was extended by replacing the $C^*$-algebra by an operator system. For such an operator system spectral triple $(\C X,H,D)$ one may again form a seminorm using the formula \eqref{eq:semispec} and it then makes sense to ask whether the data $(X,L_D)$ is a compact quantum metric space. We shall see examples of this phenomenon in our analysis of quantum fuzzy spheres here below, see Section \ref{ss:quafuz}.



\section{Quantum fuzzy spheres}\label{sec:quantum-fuzzy-spheres}
In this section we introduce the key ingredients needed to prove the convergence of the Podle\'s spheres towards the classical round sphere. Our proof of convergence proceeds via a finite dimensional approximation procedure involving a quantum version of the fuzzy spheres. We are going to consider these quantum fuzzy spheres as finite dimensional operator system spectral triples sitting inside the D\k{a}browski-Sitarz spectral triple for the corresponding Podle\'s sphere. The operation which links the quantum fuzzy spheres to the Podle\'s sphere is then provided by a quantum analogue of the classical Berezin transform. We are now going to describe all these ingredients and once this is carried out the present section culminates with a proof of the Lip-norm contractibility of the quantum Berezin transform.
Let us once and for all fix an $N \in \nn$ and a deformation parameter $q \in (0,1]$.

\subsection{The quantum Berezin transform}
Our first ingredient is a quantum analogue of the Berezin transform. This is going to be a positive unital map $\be_N \colon C(S_q^2) \to C(S_q^2)$ which has a finite dimensional image. The aim of this section is to introduce the Berezin transform and compute its image.
We define the state $h_N \colon { C(S_q^2)} \to \cc$ by the formula
\begin{align}\label{eq:statedef}
h_N(x) := \inn{N+1} \cd h\big( (a^*)^N x a^N \big) .
\end{align}
Remark that $h_N$ is indeed unital since $(a^*)^N = u^N_{00}$ and therefore
\[
h_N(1) = \inn{N+1} \cd h\big( u^N_{00} \cd (u^N_{00})^*\big) = 1
\]
by the identity in \eqref{eq:haarmatrixII}. In the definition here below, we let $\De \colon C(S_q^2) \to C(SU_q(2)) \ot_{\T{min}} C(S_q^2)$ denote the left coaction of quantum $SU(2)$ on the Podle\'s sphere. This coaction comes from the restriction of the coproduct $\De \colon C(SU_q(2)) \to C(SU_q(2)) \ot_{\T{min}} C(SU_q(2))$ to the Podle\'s sphere $C(S_q^2) \su C(SU_q(2))$. 

\begin{dfn} The \emph{quantum Berezin transform} in degree $N \in \nn$ is the positive unital map 
$
\be_N \colon C(S_q^2) \to C(S_q^2)$ given by $\be_N(x) := (1 \ot h_N) \De(x) $.

\end{dfn}

We notice that $\be_N$ would a priori take values in $C(SU_q(2))$, but the following lemma shows that $\be_N \colon C(S_q^2) \to C(SU_q(2))$ does indeed factorise through $C(S_q^2)$. Recall to this end that the elements $u^{2m}_{im}$ for $m \in \nn_0$ and $i \in \{0,1,\ldots,2m\}$ form a vector space basis for the coordinate algebra $\C O(S_q^2)$. 

\begin{lemma}\label{l:altI}
For every $m \in \nn_0$ and $i \in \{0,1,2,\ldots,2m\}$ we have the formula
\[
\be_N(u^{2m}_{im} ) = u^{2m}_{im} \cd h_N(u^{2m}_{mm}) .
\]
\end{lemma}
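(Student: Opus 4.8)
The plan is to reduce the identity to a single vanishing statement and then exploit a symmetry of the Haar state. First I would unfold the definition $\be_N = (1 \ot h_N)\De$ on the basis vector $u^{2m}_{im}$. Since the coproduct satisfies $\De(u^{2m}) = u^{2m} \ot u^{2m}$ on the level of corepresentation matrices, its restriction to $\C O(S_q^2)$ gives
\[
\De(u^{2m}_{im}) = \sum_{k = 0}^{2m} u^{2m}_{ik} \ot u^{2m}_{km},
\]
and each second-leg factor $u^{2m}_{km}$ again lies in $\C O(S_q^2)$ because its right index equals $m$. Applying $1 \ot h_N$ therefore yields
\[
\be_N(u^{2m}_{im}) = \sum_{k = 0}^{2m} u^{2m}_{ik} \cd h_N(u^{2m}_{km}).
\]
Comparing with the claimed formula, it suffices to show that $h_N(u^{2m}_{km}) = 0$ for every $k \neq m$: the single surviving summand $k = m$ then reproduces exactly $u^{2m}_{im} \cd h_N(u^{2m}_{mm})$.

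To establish this vanishing I would use the circle action $\tau$ on $\C O(SU_q(2))$ acting on the \emph{left} matrix index, $\tau_z(u^n_{ij}) := z^{2i - n} \cd u^n_{ij}$ for $z \in S^1$. The assignment $u^n_{ij} \mapsto 2i - n$ defines a $\zz$-grading which, as one reads off directly from the left multiplication formulae \eqref{eq:leftmult}, is respected by left multiplication with each of the generators $a, a^*, b, b^*$; hence it is an algebra grading and $\tau_z$ is a well-defined algebra automorphism. Moreover $h$ is $\tau$-invariant, since $\tau_z$ merely rescales each $u^n_{ij}$ while $h(u^n_{ij}) = 0$ for $n \geq 1$ and $h(1) = 1$. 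Now $a^* = u^1_{00}$ and $a = u^1_{11}$ carry $\tau$-degrees $-1$ and $+1$, so $(a^*)^N u^{2m}_{km} a^N$ is $\tau$-homogeneous of degree $-N + (2k - 2m) + N = 2(k - m)$, and invariance of $h$ gives, for all $z \in S^1$,
\[
h\big( (a^*)^N u^{2m}_{km} a^N \big) = h\big( \tau_z\big( (a^*)^N u^{2m}_{km} a^N \big) \big) = z^{2(k - m)} \cd h\big( (a^*)^N u^{2m}_{km} a^N \big).
\]
Choosing $z$ with $z^{2(k-m)} \neq 1$ forces $h_N(u^{2m}_{km}) = \inn{N+1} \cd h\big( (a^*)^N u^{2m}_{km} a^N \big) = 0$ whenever $k \neq m$, which finishes the proof.

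The main obstacle is precisely this vanishing, and more specifically finding an argument that is robust in $q$. A brute-force expansion via \eqref{eq:leftmult} or via $q$-Schur orthogonality looks unpleasant, and the tempting shortcut of applying only the single grading operator $\de_k$ produces the factor $q^{k-m}$, which degenerates at $q = 1$; the continuous circle action $\tau$ avoids this and treats all $q \in (0,1]$ uniformly. As an alternative that sidesteps $\tau$ altogether, one can argue representation-theoretically: coassociativity of the coaction shows $\De \circ \be_N = (1 \ot \be_N) \circ \De$, so $\be_N$ is an endomorphism of the left comodule $\C O(S_q^2)$, which decomposes into the irreducible spin-$m$ subcomodules $V_m = \T{span}\{ u^{2m}_{km} \}_{k = 0}^{2m}$. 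By Schur's lemma $\be_N$ acts as a scalar $\la_m$ on $V_m$, and matching this against the expansion above, together with the linear independence of the $u^{2m}_{ik}$, simultaneously yields the vanishing for $k \neq m$ and identifies $\la_m = h_N(u^{2m}_{mm})$.
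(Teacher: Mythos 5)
Your proof is correct, and it diverges from the paper's at exactly the point where the real content lies. The first half is identical: the paper's proof also expands $\be_N(u^{2m}_{im}) = \sum_{l=0}^{2m} u^{2m}_{il}\cd h_N(u^{2m}_{lm})$ via $\De(u^{2m}) = u^{2m}\ot u^{2m}$ and reduces the lemma to the vanishing of $h_N(u^{2m}_{lm})$ for $l \neq m$. For that vanishing, however, the paper argues via the modular automorphism $\nu$ of \eqref{eq:modular-function}: since $h\ci\nu = h$ (take $x=1$ in \eqref{eq:modular}) and $\nu$ scales $(a^*)^N u^{2m}_{lm} a^N$ by $q^{2(m-l)}$, the Haar state must annihilate this element when $l\neq m$. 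This is precisely the species of argument you correctly flag as degenerating at $q=1$, where $\nu=\T{id}$ and \eqref{eq:modular} is just the trace property; strictly speaking the paper's one-line justification covers $q\in(0,1)$ only, so your weight argument via $\tau_z(u^n_{ij}) = z^{2i-n}\cd u^n_{ij}$ is genuinely more robust, treating all $q\in(0,1]$ uniformly. Note that your $\tau$ is exactly the right circle action $\si_R$ which the paper itself introduces in Section \ref{sec:qGH-convergence}, where the invariance $h_N\big(\si_R(z,x)\big) = h_N(x)$ is used for the analogous purpose of restricting to $C^*(A,1)$ -- so your mechanism is fully consonant with the paper's toolkit; two small remarks are that the symbol $\tau$ already denotes the automorphism $\de_k\pa_k$ in Section \ref{s:der}, and that the existence of $\tau_z$ as an algebra automorphism is obtained most cheaply from the universal property of $C(SU_q(2))$, since the defining relations are homogeneous for your grading, whereupon your verification through \eqref{eq:leftmult} becomes a computation of the weights $2i-n$ rather than a well-definedness check. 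Your Schur-lemma alternative is also sound: the intertwining identity $\De\ci\be_N = (1\ot\be_N)\ci\De$ follows from the same coassociativity manipulation as the paper's Lemma \ref{l:derI}, the spin-$m$ subcomodules occur in $\C O(S_q^2)$ with multiplicity one by \eqref{eq:midcol}, and linear independence of the matrix entries (they form an orthogonal basis of $L^2(SU_q(2))$) extracts both the vanishing and the scalar $\la_m = h_N(u^{2m}_{mm})$. In terms of trade-offs: the paper's $\nu$-argument is the shortest route for $q\neq 1$ and needs nothing beyond \eqref{eq:modular}, while your route costs a few extra lines but is uniform in the deformation parameter and, in its comodule form, explains conceptually why $\be_N$ must act diagonally on the decomposition of $\C O(S_q^2)$.
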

\begin{proof}
Let $m \in \nn_0$ and $i \in \{0,1,2,\ldots,2m\}$ be given. We compute that
\[
\begin{split}
\be_N( u^{2m}_{im}) 
& = (1 \ot h_N) \De(u^{2m}_{im}) = \sum_{l = 0}^{2m} u^{2m}_{il} \cd h_N( u^{2m}_{lm}) \\
& = \sum_{l = 0}^{2m} u^{2m}_{il} \cd h\big( (a^* )^N u^{2m}_{lm} a^N \big) \inn{N+1}
= u^{2m}_{im} \cd h_N(u^{2m}_{mm}) ,
\end{split}
\]
where the last identity follows since the Haar state $h \colon \C O(SU_q(2)) \to \cc$ is invariant under the algebra automorphism $\nu \colon \C O(SU_q(2)) \to \C O(SU_q(2))$.
\end{proof}
\begin{remark}
The (unpublished) paper \cite{Sain:thesis} also provides a  version of the Berezin transform for certain quantum homogeneous spaces, but the setting is that of Kac type quantum groups and the constructions therefore do not directly apply to $SU_q(2)$ and $S_q^2$ when $q\neq 1$.
\end{remark}

For classical spaces, the Berezin transform is a well studied object (cf. \cite{Sch:BTQ} and references therein), and as we shall now see, our quantum Berezin transform exactly recovers the classical construction when $q=1$. 
So let us for a little while assume that $q = 1$. Recall first that the irreducible corepresentation $u^N\in \mathbb{M}_{N+1}(C(SU(2)))$ is actually the same as an irreducible representation $u^N\colon SU(2) \to U(\mathbb{C}^{N+1})$. Let $\operatorname{Tr}$ denote the trace on $\mathbb{M}_{N+1}(\mathbb{C})$ without normalisation so that $\operatorname{Tr}(1) = N + 1$. We choose $P\in \mathbb{M}_{N+1}(\mathbb{C})$ to be the rank one projection with $P_{00} = 1$ and all other entries equal to zero. Viewing $C(S^2)$ as the fixpoint algebra $C(SU(2))^{S^1}$, the classical Berezin transform $b_N \colon C(S^2) \to C(S^2)$ is given by  (cf.~\cite[Section 3.3.1]{walter:GH-convergence} or \cite[Section 2]{Rie:MSG})
\[
b_N(f)(g):= (N+1)\int_{SU(2)} f(g \cd x^{-1})H_N(x) d\lambda(x), \quad { g \in SU(2)}, 
\]
where { $H_N$} denotes the density $H_N(x):=\operatorname{Tr}\big(P u^N(x)P u^N(x)^*\big)$ and $\lambda$ is the Haar probability measure on $SU(2)$. We remark that the trace property implies that $H_N(x)=H_N(x^{-1})$ which together with the unimodularity of $SU(2)$ gives 
\[
b_N(f)(g)=(N+1)\int_{SU(2)} f(g \cd x)H_N(x) d\lambda(x), \quad { g \in SU(2)}.
\]
An element $x$ in $SU(2)$ is (in the first irreducible representation) given by a complex matrix $\begin{pmatrix} \bar{z_1}  &  -z_2 \\ \bar{z_2} & {z_1}\\ \end{pmatrix}$ and the functions $a$ and $b$ then correspond to mapping the matrix to $z_1$ and $z_2$, respectively. Recall also that we have chosen the irreducible corepresentations $u^N$ so that $u^N_{00}=(a^*)^N$. A direct computation now shows that
\[
\operatorname{Tr}\big(P u^N(x)P u^N(x)^*\big)=\bar{z_1}^Nz_1^N =(a^*)^N(x) a^N(x).
\]
Since the Haar state $h$ at $q=1$ is given by integration against $\lambda$, we obtain from this that
\begin{align*}
(N+1)\int_{SU(2)} f(g \cd x)H_N(x) d\lambda(x) &= (N+1)\int_{SU(2)} \Delta(f)(g,x)(a^*)^N(x) a^N(x) d\lambda(x) \\
&=\inn{N+1} \cd h\big( \Delta(f)(g,-)(a^*)^Na^N \big)\\
&=(1\otimes h_N)(\Delta(f))(g) ,
\end{align*}
whenever $g$ belongs to $SU(2)$. This shows that our quantum Berezin transform $\be_N$ agrees with the classical Berezin transform $b_N$ when $q=1$.\\

We now return to the more general setting where the deformation parameter $q$ belongs to $(0,1]$. We apply the convention that $u_{ij}^n = 0$ whenever $n < 0$ or $n \in \nn_0$ and $(i,j) \notin \{0,1,\ldots,n\}^2$. 

\begin{lemma}\label{l:altII}
The image of $\be_N \colon C(S_q^2) \to C(S_q^2)$ agrees with the linear span:
\[
\T{span}_{\cc}\big\{ u^{2m}_{im} \mid m \in \{0,1,\ldots,N\} \, , \, \, i \in \{0,1,\ldots,2m\} \big\} .
\]
In particular, we have that $\T{Im}(\be_N) \su \C O(S_q^2)$ and that $\T{Dim}( \T{Im}(\be_N)) = (N + 1)^2$. 
\end{lemma}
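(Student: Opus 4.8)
The plan is to combine Lemma \ref{l:altI} with an analysis of the scalars $h_N(u^{2m}_{mm})$. By Lemma \ref{l:altI} the map $\be_N$ sends each basis vector $u^{2m}_{im}$ of $\C O(S_q^2)$ (see \eqref{eq:midcol}) to $h_N(u^{2m}_{mm}) \cd u^{2m}_{im}$, so on the coordinate algebra $\be_N$ acts diagonally in this basis. Hence everything reduces to determining the set $M := \{ m \in \nn_0 \mid h_N(u^{2m}_{mm}) \neq 0\}$: once we show $M = \{0,1,\ldots,N\}$, the image of $\be_N$ on $\C O(S_q^2)$ is exactly $V := \T{span}_\cc\{u^{2m}_{im} \mid 0 \le m \le N,\ 0 \le i \le 2m\}$. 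Since $V$ is finite dimensional, hence norm-closed, and since $\be_N$ is bounded while $\C O(S_q^2)$ is dense in $C(S_q^2)$, this forces $\T{Im}(\be_N) = V \su \C O(S_q^2)$; the nonvanishing of $h_N(u^{2m}_{mm})$ for $m \le N$ is precisely what guarantees that each generator $u^{2m}_{im}$ of $V$ is actually hit. The dimension statement then follows from $\sum_{m=0}^N (2m+1) = (N+1)^2$.

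First I would rewrite the scalar in a computable form. Using that $(a^*)^N = u^N_{00}$, and hence $a^N = u^N_{NN} = (u^N_{00})^*$, together with the orthogonality of the basis $\{u^n_{ij}\}$ in $L^2(SU_q(2))$, one obtains
\[
h_N(u^{2m}_{mm}) = \inn{N+1} \cd h\big( (a^*)^N u^{2m}_{mm} a^N\big) = \inn{N+1} \cd \inn{a^N, u^{2m}_{mm} a^N} .
\]
Since $\|a^N\|^2 = \inn{u^N_{NN},u^N_{NN}} = \inn{N+1}^{-1}$ by \eqref{eq:haarmatrixI}, this equals precisely the coefficient of $u^N_{NN}$ in the expansion of the product $u^{2m}_{mm} a^N$ in the orthogonal basis $\{u^n_{ij}\}$. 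Thus I have reduced the whole problem to locating the $u^N_{NN}$-component of $u^{2m}_{mm} a^N$.

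For the vanishing when $m > N$ I would invoke the Clebsch--Gordan rule: the product $u^{2m}_{mm} a^N$ lives among the matrix coefficients of $u^{2m} \ot u^N \cong \bop_{l} u^{|2m - N| + 2l}$, whose irreducible summands all carry upper index at least $|2m - N|$. When $m > N$ one has $|2m - N| = 2m - N > N$, so $u^N$ does not occur and the $u^N_{NN}$-component is forced to be $0$; equivalently, $h \colon \C O(SU_q(2)) \to \cc$ annihilates every $u^n_{ij}$ with $n \geq 1$, so only the (now absent) trivial subrepresentation could have contributed. This gives $h_N(u^{2m}_{mm}) = 0$ for $m > N$ and hence $\T{Im}(\be_N) \su V$.

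The main obstacle is the nonvanishing for $0 \le m \le N$, which genuinely requires computing the relevant coefficient rather than merely knowing that $u^N$ occurs in $u^{2m} \ot u^N$ (with multiplicity one). Here I would compute $u^{2m}_{mm} a^N$ explicitly by induction on $N$, converting the left-multiplication formulas \eqref{eq:leftmult} into a right-multiplication-by-$a$ recursion via the adjoint relation expressing $(u^n_{ij})^*$ again as a matrix coefficient, and track the coefficient of $u^N_{NN}$ at each step. The point is that this coefficient is a product of ratios of the strictly positive quantities $\inn{k}$ (with $k \ge 1$) and therefore cannot vanish for $m \le N$, while the recursion collapses to zero exactly once $m$ exceeds $N$, matching the Clebsch--Gordan obstruction above. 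With $M = \{0,1,\ldots,N\}$ thus established, the identifications $\T{Im}(\be_N) = V \su \C O(S_q^2)$ and $\T{Dim}(\T{Im}(\be_N)) = \sum_{m=0}^N (2m+1) = (N+1)^2$ follow as explained.
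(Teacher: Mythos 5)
Your proposal is correct, and it shares the paper's overall skeleton---reduce via Lemma \ref{l:altI} to deciding for which $m$ the scalar $h_N(u^{2m}_{mm})$ is nonzero, show this happens exactly for $m \leq N$, and deduce $\T{Im}(\be_N) = V$ from boundedness of $\be_N$ and density of $\C O(S_q^2)$---but the key computation is executed by a genuinely different device. The paper never touches right multiplication: it invokes the twisted trace property \eqref{eq:modular}, with $\nu(a^N) = q^{-2N} a^N$ from \eqref{eq:modular-function}, to rewrite $h_N(u^{2m}_{mm}) = q^{-2N} \inn{N+1} \cd h\big( a^N (a^*)^N \cd u^{2m}_{mm}\big)$, then expands $a^N (a^*)^N \cd u^{2m}_{mm} = \sum_{k=-N}^{N} \al_{2m,m,m}(k) \cd u^{2m+2k}_{m+k,m+k}$ using only the left-multiplication formulas \eqref{eq:leftmult}, and applies $h(u^n_{ij}) = 0$ for $n > 0$: the trivial component $k = -m$ is present precisely when $m \leq N$, with strictly positive coefficient. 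You instead reformulate the scalar via orthogonality as the $u^N_{NN}$-coefficient of $u^{2m}_{mm} a^N$, kill it for $m > N$ by Clebsch--Gordan, and obtain positivity for $m \leq N$ from a right-multiplication recursion derived through the adjoint relation. Both proofs ultimately rest on the same phenomenon: all structure constants in the relevant expansions are nonnegative, so no cancellation can occur. Your inner-product reformulation is an attractive alternative in that it bypasses the modular automorphism entirely; the price is that you must derive the right-multiplication formulas yourself. They do come out sign-free---the factors $(-q)^{j-i}$ from $(u^n_{ij})^* = (-q)^{j-i} u^n_{n-i,n-j}$ cancel in the round trip, giving $u^n_{ij} \cd a = q^{2n-i-j} \tfrac{\sqrt{\inn{i+1}\inn{j+1}}}{\inn{n+1}} u^{n+1}_{i+1,j+1} + \tfrac{\sqrt{\inn{n-i}\inn{n-j}}}{\inn{n+1}} u^{n-1}_{ij}$---which is exactly what makes your positivity argument go through.

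One imprecision to fix in the write-up: after $N$ right multiplications by $a$, the $u^N_{NN}$-coefficient of $u^{2m}_{mm} a^N$ is not a single product of ratios of the $\inn{k}$'s, but a \emph{sum over all admissible paths} from $(2m,m,m)$ to $(N,N,N)$ of such products, since each multiplication splits into two terms. Your conclusion stands because every summand is nonnegative and at least one path (for instance $N-m$ up-steps followed by $m$ down-steps, which exists exactly when $m \leq N$) contributes strictly positively; this absence of sign cancellation is the crux and should be stated explicitly.
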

\begin{proof}
Let first $n \in \nn_0$ and $i,j \in \{0,1,\ldots,n\}$ be given. Applying the formulae from \eqref{eq:leftmult} we obtain that
\[
\begin{split}
a^N \cd u^n_{ij} & =  \sum_{k = 0}^N \la_{n,i,j}(k) \cd u_{i+k,j+k}^{n + 2k - N} \quad \T{and} \\
(a^*)^N \cd u^n_{ij} & = \sum_{k = 0}^N \mu_{n,i,j}(k) \cd u_{i -k,j-k}^{n-2k + N} ,
\end{split}
\]
where all the coefficients appearing are strictly positive. In particular, we may find strictly positive coefficients such that
\[
a^N (a^*)^N \cd u^n_{ij} = \sum_{k = -N}^N \al_{n,i,j}(k) \cd u^{n + 2k}_{i + k,j+k} .
\]
Let now $m \in \nn_0$ be given. Since $h(u^n_{ij}) = 0$ for all $n > 0$ and $h(u^0_{00}) = 1$ we obtain that
\[
\begin{split}
h_N(u^{2m}_{m,m}) 
& = h\big( (a^*)^N u^{2m}_{m,m} a^N \big) \cd \inn{N+1}
= h\big( a^N (a^*)^N \cd u^{2m}_{m,m}\big) q^{-2N} \cd \inn{N+1} \\
& = \sum_{k = -N}^N \al_{2m,m,m}(k) \cd h( u^{2m + 2k}_{m + k,m+k} ) q^{-2N} \cd \inn{N+1} \\
& = \fork{ccc}{
0 & \T{for} & m > N \\
\al_{2m,m,m}(-m) q^{-2N} \cd \inn{N+1} & \T{for} & m \leq N
} .
\end{split}
\]
An application of Lemma \ref{l:altI} then proves the result of the present lemma.
\end{proof}

\subsection{Quantum fuzzy spheres}\label{ss:quafuz}
Our second ingredient is a quantum analogue of the fuzzy spheres, which we will introduce in this section, and afterwards equip each of them with an operator system spectral triple. These operator system spectral triples provide each of the quantum fuzzy spheres with the structure of a compact quantum metric space. Moreover, we are going to link the quantum fuzzy spheres to the Podle\'s spheres by showing that the image of the Berezin transform in degree $N$ agrees with the quantum fuzzy sphere in degree $N$, thus obtaining natural quantum analogues of classical results, see \cite{walter-connes:truncations, Rie:MSG, walter:GH-convergence}.

\begin{dfn}\label{d:quantumfuzz}
We define the {\it quantum fuzzy sphere} in degree $N \in \nn$ as the $\cc$-linear span
\[
F_q^N := \T{span}_\cc\big\{ A^i B^j, A^i (B^*)^j \mid i,j \in \nn_0 \, , \, \, i + j \leq N \big\} \su C(S_q^2) .
\]
\end{dfn}

We immediately remark that the vector space dimension of $F_q^N$ agrees with $(N + 1)^2$ which in turn is the dimension of the classical fuzzy sphere $M_{N+1}(\cc)$. Since $F_q^N \su C(S_q^2)$ is closed, unital and stable under the adjoint operation we may think of the quantum fuzzy sphere as a concrete operator system. Moreover, since $(\C O(S_q^2), H_+ \op H_-, D_q)$ is an even unital spectral triple we immediately obtain an even operator system spectral triple $( F_q^N, H_+ \op H_-, D_q)$ for the quantum fuzzy spheres. In particular, we may equip $F_q^N$ with the seminorm $L_{D_q} \colon F_q^N \to [0,\infty)$ defined by
\[
L_{D_q}(x) := \max\{ \| \pa_1(x) \| , \| \pa_2(x) \| \} .
\]
Thus, $L_{D_q}$ on the quantum fuzzy sphere is just the restriction of the seminorm on $C(S_q^2)$ arising from the D\k{a}browski-Sitarz spectral triple. Since we already know that $L_{D_q}$ is a Lip-norm on $C(S_q^2)$ we immediately obtain that $L_{D_q}$ is also a Lip-norm on $F_q^N$. We summarise this observation in a lemma:

\begin{lemma}
The pair $(F_q^N,L_{D_q})$ is a compact quantum metric space. 
\end{lemma}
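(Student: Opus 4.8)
The plan is to invoke Theorem \ref{t:specmetpod} together with the structural observation that $F_q^N$ is a concrete operator system sitting inside $C(S_q^2)$, and that the seminorm $L_{D_q}$ on $F_q^N$ is nothing but the restriction of the already-established Lip-norm on $C(S_q^2)$. The key point to keep in mind is that $F_q^N$ is finite dimensional, so by the remark following Theorem \ref{t:totallybdd}, it suffices to verify that $L_{D_q}\colon F_q^N \to [0,\infty)$ satisfies only conditions (i) and (ii) of Definition \ref{def:cqms}; the total boundedness required by (iii) is then automatic.

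First I would address condition (ii), which is the easiest: since $F_q^N$ is finite dimensional, $\T{Dom}(L_{D_q}) = F_q^N$ is trivially dense in itself, and the invariance $L_{D_q}(x^*) = L_{D_q}(x)$ is inherited directly from the corresponding property on $C(S_q^2)$ (which holds because $\pa$ is a $*$-derivation, giving $\|\pa_1(x^*)\| = \|\pa_2(x)\|$ and vice versa). Next I would verify condition (i): one needs $L_{D_q}(x) = 0 \iff x \in \cc \cdot 1$. Since $F_q^N$ is unital and contains $\cc\cdot 1$, the implication $(\Leftarrow)$ is clear. For $(\Rightarrow)$, the crucial observation is that if $x \in F_q^N \su C(S_q^2)$ satisfies $L_{D_q}(x) = 0$, then in particular $x$ lies in $C(S_q^2)$ and has vanishing seminorm there, so property (i) of the already-verified compact quantum metric space $(C(S_q^2), L_{D_q})$ forces $x \in \cc \cdot 1$.

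With conditions (i) and (ii) in hand, the finite dimensionality of $F_q^N$ lets me conclude via Theorem \ref{t:totallybdd}: the image of the Lip-unit ball under the quotient map $F_q^N \to F_q^N/\cc\cdot 1$ is a bounded subset of a finite dimensional space, hence totally bounded. Therefore $(F_q^N, L_{D_q})$ is a compact quantum metric space, which is exactly the assertion of the lemma.

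I do not anticipate any genuine obstacle here, as the statement is essentially a bookkeeping consequence of results already established in the excerpt; the only subtlety worth being careful about is confirming that $L_{D_q}$ restricted to $F_q^N$ genuinely separates scalars from non-scalars, and this is handled cleanly by pulling condition (i) back from the ambient space $C(S_q^2)$ rather than attempting a direct computation with the twisted derivations $\pa_e, \pa_f$ on the basis elements $A^iB^j$ and $A^i(B^*)^j$. In fact, the argument via Theorem \ref{t:specmetpod} makes the whole proof essentially immediate, which is presumably why the authors phrased it as a short summarising lemma.
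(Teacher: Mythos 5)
Your proof is correct and takes essentially the same route as the paper, which disposes of the lemma in one line by observing that $L_{D_q}$ on $F_q^N$ is just the restriction of the Lip-norm already established on $C(S_q^2)$; your explicit reduction to conditions (i) and (ii) via the finite-dimensional remark following Theorem \ref{t:totallybdd}, with (i) pulled back from the ambient space, is exactly the intended justification spelled out. One small citation point: Theorem \ref{t:specmetpod} only covers $q\in(0,1)$, whereas the lemma is stated for $q\in(0,1]$, so for $q=1$ you should instead invoke the subsequent remark that $(C(S^2),L_{D_1})$ is a compact quantum metric space, which rests on Connes' identification of $L_{D_1}$ with the Lipschitz seminorm of the round metric.
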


We shall now see that the quantum fuzzy sphere in degree $N$ agrees with the image of the Berezin transform $\be_N \colon C(S_q^2) \to C(S_q^2)$.

\begin{lemma}\label{lem:fuzzy-equal-image}
It holds that $F_q^N = \T{Im}(\be_N)$.
\end{lemma}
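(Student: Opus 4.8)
The plan is to prove the equality by combining a dimension count with a single inclusion. From Lemma \ref{l:altII} we already know $\T{Dim}(\T{Im}(\be_N)) = (N+1)^2$, so it suffices to verify independently that $\T{Dim}(F_q^N) = (N+1)^2$ and to establish the inclusion $F_q^N \su \T{Im}(\be_N)$: the equality $F_q^N = \T{Im}(\be_N)$ then follows immediately, since we are comparing finite dimensional subspaces of $\C O(S_q^2)$. I would prove the inclusion $F_q^N \su \T{Im}(\be_N)$ rather than the reverse, because the generators $A = bb^*$, $B = ab^*$ and $B^* = ba^*$ of $F_q^N$ are explicit low-order polynomials in $a,b,a^*,b^*$, whereas writing the matrix coefficients $u^{2m}_{im}$ back in terms of $A,B,B^*$ is considerably more involved.

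For the inclusion I would exploit the filtration of $\C O(SU_q(2))$ by corepresentation degree. Set
\[
\C O(SU_q(2))_{\leq n} := \T{span}_\cc \big\{ u^{n'}_{ij} \mid 0 \leq n' \leq n, \, \, i,j \in \{0,1,\ldots,n'\} \big\}.
\]
The left multiplication formulae \eqref{eq:leftmult} show that multiplying any coefficient $u^{n'}_{ij}$ by one of the generators $a,a^*,b,b^*$ yields a linear combination of coefficients of degrees $n'+1$ and $n'-1$ only; hence each generator maps $\C O(SU_q(2))_{\leq n}$ into $\C O(SU_q(2))_{\leq n+1}$. Starting from $1 = u^0_{00} \in \C O(SU_q(2))_{\leq 0}$ and inducting on word length, any product of $\ell$ generators lies in $\C O(SU_q(2))_{\leq \ell}$. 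Since $A$, $B$ and $B^*$ are each products of exactly two generators, a monomial $A^i B^j$ or $A^i (B^*)^j$ with $i + j \leq N$ is a product of $2(i+j) \leq 2N$ generators and therefore lies in $\C O(SU_q(2))_{\leq 2N}$. Such a monomial also lies in $\C O(S_q^2)$, which by \eqref{eq:midcol} is the middle-column span $\T{span}_\cc\{u^{2m}_{im}\}$; intersecting, it lands in $\T{span}_\cc\{ u^{2m}_{im} \mid 2m \leq 2N \}$, which is exactly $\T{Im}(\be_N)$ by Lemma \ref{l:altII}. This gives $F_q^N \su \T{Im}(\be_N)$.

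It remains to compute $\T{Dim}(F_q^N)$. By \cite[Theorem 1.2]{Wor:UAC} the set $\{A^i B^j, A^i (B^*)^k \mid i,j \in \nn_0, \, k \in \nn\}$ is a vector space basis of $\C O(S_q^2)$, so the spanning monomials of $F_q^N$ are linearly independent once one accounts for the single redundancy $A^i B^0 = A^i (B^*)^0 = A^i$. Counting, the monomials $A^i B^j$ with $i + j \leq N$ contribute $\binom{N+2}{2}$ basis vectors, and the monomials $A^i (B^*)^k$ with $k \geq 1$ and $i + k \leq N$ contribute a further $\binom{N+1}{2}$, giving a total of $(N+1)^2$. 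As $F_q^N$ and $\T{Im}(\be_N)$ are finite dimensional subspaces of $\C O(S_q^2)$ of the same dimension $(N+1)^2$ with $F_q^N \su \T{Im}(\be_N)$, they coincide.

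The two attention-demanding points are bookkeeping rather than conceptual: one must track in \eqref{eq:leftmult} that left multiplication by a generator changes the degree only by $\pm 1$ (so that the word-length estimate is valid), and one must carry out the dimension count for $F_q^N$ without double counting the $j=0$ monomials. The genuinely structural input—that the degree-$\leq 2N$ part of the middle column is precisely $\T{Im}(\be_N)$—is already supplied by Lemma \ref{l:altII}, so I do not anticipate any serious obstacle beyond this bookkeeping.
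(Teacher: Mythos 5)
Your proof is correct and takes essentially the same route as the paper's: both establish the inclusion $F_q^N \su \T{Im}(\be_N)$ via the degree filtration coming from the left multiplication formulae \eqref{eq:leftmult} together with the middle-column description \eqref{eq:midcol} of $\C O(S_q^2)$, and then conclude by matching the dimension $(N+1)^2$ from Lemma \ref{l:altII}. The only cosmetic differences are that you verify $\T{Dim}(F_q^N) = (N+1)^2$ explicitly from the Woronowicz basis (the paper merely asserts this after Definition \ref{d:quantumfuzz}) and that you treat the monomials $A^i(B^*)^j$ directly by the filtration, where the paper instead reduces to $A^kB^l$ using $\be_N(x^*) = \be_N(x)^*$.
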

\begin{proof}
Since the vector space dimension of $F_q^N$ agrees with the vector space dimension of $\T{Im}(\be_N)$ it suffices to show that $F_q^N \su \T{Im}(\be_N)$ (see Lemma \ref{l:altII}). Moreover, since $\be_N(x^*) = \be_N(x)^*$ we only need to show that $A^k B^l \in \T{Im}(\be_N)$ for all $k,l \in \nn_0$ with $k + l \leq N$. However, using that $A = b b^*$ and $B = a b^*$ we see from \eqref{eq:leftmult} that
\[
A^k B^l = (b b^*)^k (ab^*)^l \in \T{span}_{\cc}\big\{ u^n_{ij} \mid n \leq 2(k+l) \, , \, \, i,j \in \{0,1,\ldots,n\} \big\}.
\]
Moreover, since $A^k B^l \in \C O(S_q^2)$ we must in fact have that
\[
A^k B^l \in \T{span}_{\cc}\big\{ u^{2m}_{im} \mid m \leq k + l \, , \, \, i \in \{0,1,\ldots,2m \} \big\} ,
\]
see \eqref{eq:midcol}. Since $k + l \leq N$ we now obtain the result of the present lemma by applying Lemma \ref{l:altII}. 
\end{proof}

When $q=1$ the classical fuzzy sphere in degree $N$ is, by definition, given as $\mathbb{M}_{N+1}(\cc)$ and the classical Berezin transform agrees with the composition $b_N =  \sigma_N \circ \breve \sigma_N$, where $\sigma_N\colon \mathbb{M}_{N+1}(\cc) \to C(S^2)$ is the so-called covariant Berezin symbol and $\breve \sigma_N$ is its adjoint (see eg.~\cite[Section 3.3.1]{walter:GH-convergence} or \cite[Section 2]{Rie:MSG}). In the quantised setting we have only defined the composition thus leaving out a treatment of the covariant Berezin symbol. However, since the quantum Berezin transform at $q=1$ agrees with the classical Berezin transform we obtain that $F^N_1=\sigma_N\big(\mathbb{M}_{N+1}(\cc)\big)$. Using our seminorm $L_{D_1} \colon F^N_1 \to [0,\infty)$ we therefore also obtain a seminorm on the classical fuzzy sphere $\mathbb{M}_{N+1}(\cc)$ by deeming the covariant Berezin symbol to be a Lip-norm isometry. At least a priori, this seminorm is different from the one considered by Rieffel in \cite{Rie:MSG} and the one arising from the Grosse-Pre\v{s}najder Dirac operator considered in \cite{Bar:MFF, GrPr:Dirac-fuzzy, walter:GH-convergence}.

\subsection{Derivatives of the Berezin transform}\label{s:der}
Our aim in this section is to show that the quantum Berezin transform is a Lip-norm contraction and we are going to achieve this for elements in the coordinate algebra $\C O(S_q^2)$. In Proposition \ref{p:derVI} here below we shall thus see that $L_{D_q}(\be_N(x)) \leq L_{D_q}(x)$ for all $x \in \C O(S_q^2)$. It turns out that the derivation $\pa : \C O(S_q^2) \to \mathbb{M}_2\big( \C O(SU_q(2))\big)$ coming from the D\k{a}browski-Sitarz spectral triple does not have any good equivariance properties with respect to the Berezin transform and this makes the proof of the inequality $L_{D_q}(\be_N(x)) \leq L_{D_q}(x)$ a delicate matter. Our strategy is to conjugate the derivation $\pa$ with the fundamental corepresentation unitary $u \in \mathbb{M}_2( \C O(SU_q(2)))$ and thereby obtain an operation which is equivariant with respect to the Berezin transform. It is in fact possible to describe the conjugated derivation $u \pa u^*$ entirely in terms of the \emph{right action} of the quantum enveloping algebra, even though the derivation $\pa$ comes from the \emph{left action} of the quantum enveloping algebra.   To this end, recall that the right action on $\C O(SU_q(2))$ of an element $g \in \C U_q(\G{su}(2))$ is given by the linear endomorphism $\de_g \colon \C O(SU_q(2)) \to \C O(SU_q(2))$ defined by $\de_g(x) := (\inn{g,\cd} \ot 1) \De(x)$. Remark that $\de_g(x) \in \C O(S_q^2)$ for all $x \in \C O(S_q^2)$ since $\De(\C O(S_q^2)) \su \C O(SU_q(2)) \ot \C O(S_q^2)$.

\begin{lemma}\label{l:derI}
Let $g \in \C U_q(\G{su}(2))$. It holds that
\[
\de_g \be_N(x) = \be_N \de_g(x) 
\]
for all $x \in \C O(S_q^2)$.
\end{lemma}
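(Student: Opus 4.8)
The plan is to observe that $\de_g$ and $\be_N$ are both slice maps built from the coproduct, acting on \emph{different} tensor legs, so that their commutation is nothing more than a manifestation of coassociativity. First I would record that the entire computation can be carried out inside the Hopf $*$-algebra $\C O(SU_q(2))$: since $\C O(S_q^2) \su \C O(SU_q(2))$ is a left coideal, the coaction $\De$ restricted to $\C O(S_q^2)$ coincides with the comultiplication of $\C O(SU_q(2))$, and by Lemma \ref{l:altII} the element $\be_N(x)$ again lies in $\C O(S_q^2)$. Consequently $\De\big(\be_N(x)\big)$ is unambiguous and may be evaluated leg-wise. Writing $\De(x) = \sum x_{(1)} \ot x_{(2)}$ in Sweedler notation, with $x_{(1)} \in \C O(SU_q(2))$ and $x_{(2)} \in \C O(S_q^2)$, the two operations read $\de_g(x) = (\inn{g,\cd} \ot 1)\De(x)$, pairing $g$ with the \emph{first} leg, and $\be_N(x) = (1 \ot h_N)\De(x)$, pairing $h_N$ with the \emph{second} leg.

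I would then compute each composite by pushing the second application of $\De$ through the slice. Since $\De$ is linear and $h_N$ is scalar-valued, one finds $\De\big(\be_N(x)\big) = (1 \ot 1 \ot h_N)(\De \ot 1)\De(x)$, and hence $\de_g\big(\be_N(x)\big) = (\inn{g,\cd} \ot 1 \ot h_N)(\De \ot 1)\De(x)$. Symmetrically, $\De\big(\de_g(x)\big) = (\inn{g,\cd} \ot 1 \ot 1)(1 \ot \De)\De(x)$, so that $\be_N\big(\de_g(x)\big) = (\inn{g,\cd} \ot 1 \ot h_N)(1 \ot \De)\De(x)$. Coassociativity, $(\De \ot 1)\De = (1 \ot \De)\De$, now identifies the two triple coproducts and yields the claimed identity at once.

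The only delicate point --- and the place where a hasty argument could slip --- is the justification that $\De\big(\be_N(x)\big)$ may indeed be evaluated by applying the comultiplication of $\C O(SU_q(2))$ to each leg, even though the intermediate expression $\sum x_{(1)} h_N(x_{(2)})$ is a priori only visibly an element of $\C O(SU_q(2))$ rather than of $\C O(S_q^2)$. This is exactly where Lemma \ref{l:altII} (equivalently Lemma \ref{l:altI}) enters: it guarantees $\be_N(x) \in \C O(S_q^2)$, so that the coaction and the comultiplication agree on it and the slice manipulation is legitimate. Once this compatibility is secured, the remainder is a two-line Sweedler calculation, and I do not anticipate any genuine analytic or combinatorial obstacle.
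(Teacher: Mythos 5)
Your proposal is correct and takes essentially the same route as the paper's proof: both reduce the identity to coassociativity of $\De$ via a slice-map computation, pairing $g$ against the first leg and $h_N$ against the last leg of the iterated coproduct, after first invoking Lemma \ref{l:altII} to guarantee $\be_N(x) \in \C O(S_q^2)$ so that the composition $\de_g \be_N$ is well defined on the algebraic level. Your extra remark on why the leg-wise evaluation of $\De\big(\be_N(x)\big)$ is legitimate is exactly the point the paper disposes of in its opening sentence, so there is nothing to add.
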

\begin{proof}
Remark first that $\T{Im}(\be_N) \su \C O(S_q^2)$ so that it makes sense to look at the composition $\de_g \be_N$, see Lemma \ref{l:altII}. Then by coassociativity of the coproduct $\De \colon \C O(SU_q(2)) \to \C O(SU_q(2)) \ot \C O(SU_q(2))$ we have that
\[
\begin{split}
\de_g \be_N(x) 
& = (\inn{g,\cd} \ot 1)(\De \ot h_N)\De(x) 
= (\inn{g,\cd} \ot 1)(1 \ot 1 \ot h_N)(1 \ot \De)\De(x) \\
& = (1 \ot h_N)\De(\inn{g,\cd} \ot 1)\De(x) = \be_N \de_g(x)
\end{split}
\]
for all $x \in \C O(S_q^2)$. This proves the lemma.
\end{proof}

We are particularly interested in the three linear maps $\de_e, \de_f$ and $\de_k \colon \C O(SU_q(2)) \to \C O(SU_q(2))$. We record that $\de_k$ is an algebra automorphism whereas $\de_e$ and $\de_f$ are twisted derivations, meaning that
\begin{equation}\label{eq:delder} 
\begin{split}
\de_e(x \cd y) & = \de_e(x) \cd \de_k(y) + \de_k^{-1}(x) \cd \de_e(y) \quad \T{and} \\
\de_f(x \cd y) & = \de_f(x) \cd \de_k(y) + \de_k^{-1}(x) \cd \de_f(y)
\end{split}
\end{equation}
for all $x,y \in \C O(SU_q(2))$. The compatibility between our three operations and the involution on $\C O(SU_q(2))$ is described by the identities
\begin{equation}\label{eq:delstar}
\de_e(x^*) = -q^{-1} \cd \de_f(x)^* \, \, , \, \, \, \de_f(x^*) = -q \cd \de_e(x)^* \, \, \T{and} \, \, \, \de_k(x^*) = \de_k^{-1}(x)^* .
\end{equation}
 In the case where $q = 1$, we emphasise that our conventions imply that $\de_k$ agrees with the identity automorphism of $\C O(SU(2))$, and hence we obtain that $\de_e$ and $\de_f$ are derivations on $\C O(SU(2))$ in the usual sense of the word. However, for $q = 1$ we also have the interesting derivation $\de_h \colon \C O(SU(2)) \to \C O(SU(2))$ coming from the third generator $h \in \C U(\G{su}(2))$. This extra derivation relates to the adjoint operation via the formula $\de_h(x^*) = - \de_h(x)^*$.\\
It is convenient to record the formulae:

\begin{align}\label{eq:delexp}
\de_k(a) &= q^{1/2} \cd a&   \de_e(a) &= 0 & \de_f(a) &= -q \cd b  \notag \\
\de_k(a^*) &= q^{-1/2} \cd a^*&  \de_e(a^*) &= b^* &\de_f(a^*) &= 0 \\
\de_k(b) &= q^{-1/2} \cd b &   \de_e(b) &= -q^{-1} \cd a  &\de_f(b) &=0 \notag\\
 \de_k(b^*) &= q^{1/2} \cd b^*  &   \de_e(b^*)&=0& \de_f(b^*)& = a^*  \notag 
\end{align}
Moreover, for $q = 1$ we in addition have that
\[
\de_h(a) = a \, \, , \, \, \, \de_h(b) = -b \, \, , \, \, \, \de_h(a^*) = -a^* \, \,  \T{and } \, \, \de_h(b^*) = b^* . 
\]
We define the algebra automorphism $\tau := \de_k \pa_k \colon \C O(SU_q(2)) \to \C O(SU_q(2))$ and notice that it follows from the defining commutation relations in $\C O(SU_q(2))$ that
\begin{equation}\label{eq:taucommu}
b x = \tau(x) b \quad \T{and} \quad b^* x = \tau(x) b^* \quad \T{for all } x \in \C O(SU_q(2)) .
\end{equation}
Clearly, $\tau$ agrees with $\de_k$ when restricted to $\C O(S_q^2)$. In general, when $\te \colon \C O(SU_q(2)) \to \C O(SU_q(2))$ is an algebra automorphism, we shall apply the notation
\[
[y,x]_\te := y x - \te(x) y 
\]
for the twisted commutator between two elements $x$ and $y \in \C O(SU_q(2))$. With this notation we now obtain:

\begin{lemma}\label{l:deriII}
We have the identities
\[
[a^*,x]_{\de_k} = (1 - q^2) q^{1/2} b \pa_e(x) \quad \mbox{and} \quad
[a, x]_{\de_k} = (1 - q^2) q^{-3/2} b^* \pa_f(x)
\]
for all $x \in \C O(S_q^2)$.
\end{lemma}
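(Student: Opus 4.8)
The plan is to prove both identities by the same device: to recognise each side as a \emph{twisted derivation} in the variable $x$ and then to reduce the verification to a generating set of $\C O(S_q^2)$. Concentrating on the first identity, I set $L(x) := [a^*,x]_{\de_k}$ and $R(x) := (1 - q^2) q^{1/2} b \pa_e(x)$, both viewed as linear maps $\C O(S_q^2) \to \C O(SU_q(2))$. First I would show that each obeys the $\de_k$-twisted Leibniz rule
\[
D(xy) = D(x)\cd y + \de_k(x) \cd D(y), \qquad x,y \in \C O(S_q^2).
\]
For $L$ this is immediate from the definition of the twisted commutator: writing $a^* x = L(x) + \de_k(x) a^*$ and substituting twice, the two terms ending in $a^*$ cancel. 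For $R$ one uses that $\pa_e$ is a twisted derivation \eqref{eq:derder}; since $\pa_k$ fixes $\C O(S_q^2)$, the twist disappears on the Podle\'s sphere, so $\pa_e(xy) = \pa_e(x) y + x \pa_e(y)$ there, and the factor $b$ is then moved past $x$ via $b x = \tau(x) b$ from \eqref{eq:taucommu} together with $\tau\vert_{\C O(S_q^2)} = \de_k$. This produces exactly the twist $\de_k(x)$ in the second term.

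Since $L$ and $R$ are linear, vanish on the unit, and satisfy the \emph{same} twisted Leibniz rule, their difference $L-R$ is again a $\de_k$-twisted derivation and hence vanishes on all of $\C O(S_q^2)$ as soon as it vanishes on the algebra generators $A = bb^*$, $B = ab^*$ and $B^* = ba^*$. I would therefore verify $L = R$ on these three elements. Using \eqref{eq:derexp} and the twisted Leibniz rule one finds $\pa_e(A) = -q^{-3/2} a^* b^*$, $\pa_e(B) = q^{-1/2}(b^*)^2$, $\pa_e(B^*) = -q^{-3/2}(a^*)^2$, and on the other side $\de_k(A) = A$, $\de_k(B) = qB$, $\de_k(B^*) = q^{-1}B^*$; the defining relations of $C(SU_q(2))$ in the form $a^* b = q b a^*$, $a^* b^* = q b^* a^*$, $a^* a = 1 - q^2 bb^*$, $aa^* = 1 - bb^*$ then collapse both $L$ and $R$ to a common expression on each generator (for instance $(1-q^2)b(b^*)^2$ on $B$). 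This is a short, mechanical relation-chase.

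The second identity is handled identically, with $L'(x) := [a,x]_{\de_k}$ and $R'(x) := (1 - q^2) q^{-3/2} b^* \pa_f(x)$: the Leibniz rule for $R'$ now uses the companion relation $b^* x = \tau(x) b^*$ from \eqref{eq:taucommu}, and the generator check rests on $\pa_f(A) = q^{-1/2} b a$, $\pa_f(B) = q^{-1/2} a^2$, $\pa_f(B^*) = -q^{1/2} b^2$ together with the same commutation relations. I expect the only genuine obstacle to be the bookkeeping in the Leibniz step: one must align the twist coming from $\pa_e$ (resp.\ $\pa_f$) with the twist produced by commuting $b$ (resp.\ $b^*$) past $x$, and confirm that together they yield precisely $\de_k$ rather than some other automorphism. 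Once the two maps are seen to share this twisted-derivation structure, the reduction to $A$, $B$, $B^*$ and the final computations are entirely routine.
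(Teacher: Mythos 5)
Your proposal is correct and follows essentially the same route as the paper's proof: you show that $x \mapsto [a^*,x]_{\de_k}$ and $x \mapsto b\pa_e(x)$ (and their counterparts for the second identity) satisfy the same $\de_k$-twisted Leibniz rule, using \eqref{eq:taucommu} together with the fact that $\pa_k$ fixes $\C O(S_q^2)$, and then reduce to a direct check on the generators $A$, $B$, $B^*$, exactly as the paper does. Your formula $\pa_e(A) = -q^{-3/2}a^*b^*$ agrees with the paper's $-q^{-1/2}b^*a^*$ via the relation $b^*a^* = q^{-1}a^*b^*$, and all your generator computations are correct.
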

\begin{proof}
The operation $x \mapsto [a^*,x]_{\de_k}$ satisfies a twisted Leibniz rule, meaning that
\[
[a^*, x y]_{\de_k} = [a^*,x]_{\de_k} y + \de_k(x) [a^*, y]_{\de_k} ,
\]
for all $x, y \in \C O(S_q^2)$. It moreover follows from \eqref{eq:taucommu} that the operation $x \mapsto b \pa_e(x)$ satisfies the same twisted Leibniz rule so that
\[
b \pa_1(xy) = b \pa_1(x) y + \de_k(x) b \pa_e(y) ,
\]
for all $x,y \in \C O(S_q^2)$. In order to prove the first identity of the lemma, it thus suffices to check that 
\[
a^* x - \de_k(x) a^* = (1 - q^2) q^{1/2} b \pa_e(x)
\]
for $x \in \{A,B,B^*\}$. This can be done in a straightforward fashion using that 
\begin{align}\label{eq:partial-on-generators}
\pa_e(A) &= - q^{-1/2} b^* a^* \notag\\ 
\pa_e(B) &= q^{-1/2} (b^*)^2\\
\pa_e(B^*) &= -q^{-3/2} (a^*)^2 \notag ,
\end{align}
which can be seen from \eqref{eq:derexp}. The second identity of the lemma follows by a similar argument, or, alternatively, from the first identity by applying the involution.
\end{proof}

We recall that
\[
u := u^1 = \ma{cc}{a^* & -qb \\ b^* & a} \in \mathbb{M}_2\big( \C O(SU_q(2)) \big)
\]
denotes the fundamental corepresentation unitary and that $\pa \colon \C O(S_q^2) \to \mathbb{M}_2(\C O(SU_q(2)))$ denotes the derivation
\[
\pa = \ma{cc}{0 & \pa_2 \\ \pa_1 & 0} = \ma{cc}{0 & q^{-1/2} \pa_f \\ q^{1/2} \pa_e & 0} .
\]

\begin{lemma}\label{l:deriIII}
We have the identities
\[
[u, x]_{\de_k} = (1 - q^2)  \ma{cc}{0 & b \\ q^{-1} b^* & 0} \pa(x) \quad \mbox{and} \quad
[u^*, \de_{k^{-1}}(x)]_{\de_k} = (1 - q^2)  \pa(x) \ma{cc}{0 & q^{-1} b \\ b^* & 0} 
\]
for all $x \in \C O(S_q^2)$.
\end{lemma}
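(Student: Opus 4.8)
The plan is to verify both identities entrywise, reading the matrix twisted commutator $[M,x]_{\de_k}$ as the matrix whose $(i,j)$-entry is the scalar twisted commutator $[M_{ij},x]_{\de_k}$, so that $[M,x]_{\de_k} = Mx - \de_k(x)M$.

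For the first identity I would expand $[u,x]_{\de_k}$ using $u = \ma{cc}{a^* & -qb \\ b^* & a}$. The two diagonal entries $[a^*,x]_{\de_k}$ and $[a,x]_{\de_k}$ are handled immediately by Lemma \ref{l:deriII}, yielding $(1-q^2)q^{1/2}b\pa_e(x)$ and $(1-q^2)q^{-3/2}b^*\pa_f(x)$ respectively. The crucial simplification is that the two off-diagonal entries vanish: for $x \in \C O(S_q^2)$ the relation \eqref{eq:taucommu} gives $bx = \tau(x)b$ and $b^*x = \tau(x)b^*$, and since $\pa_k$ fixes $\C O(S_q^2)$ we have $\tau(x) = \de_k\pa_k(x) = \de_k(x)$; hence $[b,x]_{\de_k} = [b^*,x]_{\de_k} = 0$. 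It then remains to perform the $2\times 2$ multiplication on the right-hand side: recalling $\pa = \ma{cc}{0 & q^{-1/2}\pa_f \\ q^{1/2}\pa_e & 0}$, one checks that $\ma{cc}{0 & b \\ q^{-1}b^* & 0}\pa(x)$ is diagonal with entries $q^{1/2}b\pa_e(x)$ and $q^{-3/2}b^*\pa_f(x)$, matching the left-hand side.

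For the second identity, rather than repeating the computation I would deduce it from the first by an adjoint argument, which keeps the bookkeeping minimal. The key algebraic observation is that for scalars one has $([y,x]_{\de_k})^* = -[y^*,\de_{k^{-1}}(x^*)]_{\de_k}$, which follows from $(yx - \de_k(x)y)^* = x^*y^* - y^*\de_k(x)^*$ together with the $*$-relation $\de_k(x)^* = \de_{k^{-1}}(x^*)$ read off from \eqref{eq:delstar}. Applied entrywise, and using $u_{ji}^* = (u^*)_{ij}$, this upgrades to the matrix identity $([u,x^*]_{\de_k})^* = -[u^*,\de_{k^{-1}}(x)]_{\de_k}$. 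I then apply the (already proved) first identity to $x^*\in \C O(S_q^2)$ and take adjoints: using that $\pa$ is a $*$-derivation, so $\pa(x^*)^* = -\pa(x)$ (equivalently, via \eqref{eq:derstar}), and that $\ma{cc}{0 & b \\ q^{-1}b^* & 0}^* = \ma{cc}{0 & q^{-1}b \\ b^* & 0}$, the adjoint of the first identity becomes $-(1-q^2)\pa(x)\ma{cc}{0 & q^{-1}b \\ b^* & 0}$. Combining the two displays gives exactly the second identity.

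I do not anticipate a serious obstacle, since the essential content is already packaged into Lemma \ref{l:deriII} and the $*$-structure relations. The one point demanding care is the sign and the $\de_{k^{-1}}$-twist that appear when passing an adjoint through a twisted commutator — this is precisely why the statement carries $\de_{k^{-1}}(x)$ on the left and a reflected matrix on the right — so I would verify the scalar identity $([y,x]_{\de_k})^* = -[y^*,\de_{k^{-1}}(x^*)]_{\de_k}$ and its entrywise transpose most carefully. A self-contained alternative to the adjoint route is to expand $[u^*,\de_{k^{-1}}(x)]_{\de_k}$ directly via Lemma \ref{l:deriII}, but this requires commuting $\pa_e,\pa_f$ past $\de_{k^{-1}}$ and invoking the relation $\pa_k\pa_e = q^{-1}\pa_e\pa_k$, so it is less economical.
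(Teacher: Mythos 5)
Your proposal is correct and follows essentially the same route as the paper: the first identity is verified entrywise, with Lemma \ref{l:deriII} giving the diagonal entries and \eqref{eq:taucommu} (together with $\tau=\de_k$ on $\C O(S_q^2)$) killing the off-diagonal ones, and the second identity is deduced from the first by taking adjoints, using $\pa(x^*)^*=-\pa(x)$ and $\de_k(x^*)^*=\de_{k^{-1}}(x)$ exactly as in the paper's displayed computation. Your explicit scalar identity $([y,x]_{\de_k})^* = -[y^*,\de_{k^{-1}}(x^*)]_{\de_k}$ is just a slightly more formalized packaging of the same adjoint argument.
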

\begin{proof}
Let $x \in \C O(S_q^2)$ be given. By definition of the fundamental corepresentation unitary $u$, the identities in \eqref{eq:taucommu}, and Lemma \ref{l:deriII} we see that
\[
\begin{split}
u x - \de_k(x) u & = \ma{cc}{ [a^*,x]_{\de_k} & -q [b,x]_{\de_k} \\ \,[b^*,x]_{\de_k} & [a,x]_{\de_k}} \\
& =  (1 - q^2) \ma{cc}{ b \pa_1(x) & 0 \\ 0 & q^{-1} b^* \pa_2(x)}
= (1 - q^2) \ma{cc}{0 & b \\ q^{-1} b^* & 0} \pa(x) .
\end{split}
\]
This proves the first identity of the lemma. The remaining identity then follows from the computation
\[
\begin{split}
(1-q^2)\pa(x) \ma{cc}{0 & q^{-1} b \\ b^* & 0} & = 
(q^2 - 1) \Big( \ma{cc}{0 & b \\ q^{-1} b^* & 0} \pa(x^*) \Big)^* 
= ( \de_k(x^*) u - u x^* )^* \\ 
& = u^* \de_k^{-1}(x) - x u^* = [u^* ,\de_k^{-1}(x)]_{\de_k} . \qedhere
\end{split}
\]
\end{proof}

We are now ready to show that the operation $x \mapsto u \pa(x) u^*$ is a twisted derivation on $\C O(S_q^2)$.

\begin{prop}\label{p:deri}
It holds that
\[
u \pa(xy) u^* = u \pa(x) u^* \de_k(y) + \de_{k^{-1}}(x) u \pa(y) u^*
\]
for all $x,y \in \C O(S_q^2)$.
\end{prop}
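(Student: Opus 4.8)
The plan is to exploit the relation $u^* u = 1$ to turn the conjugated operation $z \mapsto u\pa(z)u^*$ into a product of two manageable pieces, and then feed in Lemma \ref{l:deriIII}. Write $T(z) := u\pa(z)u^*$ and $\Phi(z) := u z u^*$ for $z \in \C O(S_q^2)$. Since $\pa$ is the derivation induced by commutation with the selfadjoint operator $D_q$, it obeys the \emph{ordinary} (untwisted) Leibniz rule $\pa(xy) = \pa(x)\cd y + x \cd \pa(y)$ on $\C O(S_q^2)$, where $x$ and $y$ act by left, respectively right, multiplication on the matrix entries. Inserting $u^* u = 1$ in the middle of each of the two resulting terms immediately yields
\[
T(xy) = T(x)\,\Phi(y) + \Phi(x)\,T(y) .
\]
The point is now that, although $\Phi$ is merely an algebra homomorphism, each value $\Phi(z)$ is a controlled perturbation of the scalar matrices $\de_k(z)\cd 1$ and $\de_{k^{-1}}(z)\cd 1$.

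Next I would rewrite the two occurrences of $\Phi$ asymmetrically, using the two identities of Lemma \ref{l:deriIII}: multiplying the first identity on the right by $u^*$ and the second on the left by $u$ gives
\[
\Phi(y) = \de_k(y) + (1-q^2) P\pa(y)u^* \quad \T{and} \quad \Phi(x) = \de_{k^{-1}}(x) - (1-q^2) u\pa(x)Q ,
\]
where $P = \ma{cc}{0 & b \\ q^{-1}b^* & 0}$ and $Q = \ma{cc}{0 & q^{-1}b \\ b^* & 0}$. The choice of which identity to apply to which factor is dictated by the target: $\Phi(y)$ sits on the right, so I expand it with the $\de_k$-term, while $\Phi(x)$ sits on the left, so I expand it with the $\de_{k^{-1}}$-term. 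Substituting these into the displayed Leibniz identity produces exactly the two desired main terms $T(x)\de_k(y) + \de_{k^{-1}}(x)T(y)$, together with a remainder
\[
(1-q^2)\big( T(x) P\pa(y) u^* - u\pa(x) Q\, T(y) \big) .
\]

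It then remains to show that this remainder vanishes. Substituting $T(x) = u\pa(x)u^*$ and $T(y) = u\pa(y)u^*$ and cancelling the inner $u^* u = 1$, the remainder collapses to
\[
(1-q^2)\, u\pa(x)\,\big( u^* P - Q u \big)\,\pa(y)\, u^* ,
\]
so everything hinges on the single matrix identity $u^* P = Q u$. This is the main obstacle, but it is ultimately a short and explicit computation: writing out both $2\times 2$ products, the diagonal entries agree on the nose, and the two off-diagonal entries agree precisely because of the commutation relations $ba = q ab$ and its adjoint $a^* b^* = q b^* a^*$, which convert $q^{-1}ba$ into $ab$ and $q^{-1}a^*b^*$ into $b^*a^*$. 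Once $u^* P = Q u$ is established the remainder is zero and the twisted Leibniz rule follows. I expect the only genuinely delicate points to be bookkeeping ones: keeping the left/right multiplications and the two different twists $\de_k$, $\de_{k^{-1}}$ straight, and making the correct asymmetric choice of expansions for $\Phi(x)$ and $\Phi(y)$ so that the main terms land with the twists demanded by the statement.
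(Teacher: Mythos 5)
Your proof is correct and is essentially the paper's own argument in different packaging: both rest on the untwisted Leibniz rule for $\pa$ on $\C O(S_q^2)$ (valid since $\pa_k$ fixes the Podle\'s coordinate algebra), a double application of Lemma \ref{l:deriIII}, and the same key matrix identity $u^* \ma{cc}{0 & b \\ q^{-1} b^* & 0} = \ma{cc}{0 & q^{-1}b \\ b^* & 0} u$, which the paper also verifies by the explicit computation using $ba = qab$ and its adjoint $a^*b^* = qb^*a^*$. Your rewriting of $\Phi(y)$ and $\Phi(x)$ as perturbations of $\de_k(y)$ and $\de_{k^{-1}}(x)$ is merely a rearrangement of the paper's add-and-subtract of the twisted commutators $[u^*,\de_k(y)]_{\de_{k^{-1}}}$ and $[u,x]_{\de_{k^{-1}}}$, so the two proofs coincide term by term.
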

\begin{proof}
Let $x,y \in \C O(S_q^2)$ be given. We compute that
\[
\begin{split}
u \pa(xy) u^* & = u \pa(x) y u^* + u x \pa(y) u^* \\ 
& = u \pa(x) u^* \de_k(y) + \de_{k^{-1}}(x) u \pa(y) u^* 
- u \pa(x) [u^*,\de_k(y)]_{\de_{k^{-1}}} + [u,x]_{\de_{k^{-1}}} \pa(y) u^* ,
\end{split}
\]
so (upon conjugating with $u$) we have to show that
\[
u^* [u,x]_{\de_{k^{-1}}} \pa(y) = \pa(x) [u^*,\de_{k}(y)]_{\de_{k^{-1}}} u .
\]
Notice now that
\[
\ma{cc}{0 & q^{-1}b \\ b^* & 0} u = \ma{cc}{q^{-1} bb^* & ab \\ q^{-1} a^* b^* & - qb^* b} 
= u^* \ma{cc}{0 & b \\ q^{-1} b^* & 0} .
\]
Thus, applying Lemma \ref{l:deriIII} we obtain that
\[
\begin{split}
u^* [u,x]_{\de_{k^{-1}}} \pa(y) & = (x - u^* \de_{k^{-1}}(x) u) \pa(y) 
= - [u^*,\de_{k^{-1}}(x)]_{\de_k} u \pa(y) \\ 
& = (q^2 - 1) \pa(x) \ma{cc}{0 & q^{-1} b \\ b^* & 0}  u \pa(y)
= (q^2 - 1) \pa(x) u^* \ma{cc}{0 & b \\ q^{-1} b^* & 0} \pa(y)  \\
& = - \pa(x) u^* [u,y]_{\de_k} = \pa(x) [u^*,\de_k(y)]_{\de_{k^{-1}}} u .
\end{split}
\]
This proves the proposition.
\end{proof}

In order to finish our computation of the conjugated derivation $x \mapsto u \pa(x) u^*$ we introduce two twisted derivations $\de_1, \de_2\colon \C O(S_q^2) \to \C O(S_q^2)$ by setting
\[
\de_1 := q^{1/2} \de_e \quad \T{and} \quad \de_2 := q^{-1/2} \de_f.
\]
For $q \neq 1$, we furthermore define the twisted derivation $\de_3 := \frac{\de_k - \de_k^{-1}}{q - q^{-1}} \colon \C O(S_q^2) \to \C O(S_q^2)$ and for $q = 1$ we simply put $\de_3 :=  \frac12\de_h \colon \C O(S^2) \to \C O(S^2)$. Here, and below, the adjective ``twisted'' is to be understood in the sense of the following Leibniz type rule:
\[
\de_i(xy)=\de_i(x)\de_k(y) + \de_{k^{-1}}(x)\de_i(y),  
\]
for $x,y\in \C O(S_q^2)$ and $i\in \{1,2,3\}$; that this holds follows from \eqref{eq:delder}. For $q \in (0,1]$ we now assemble this data into a twisted derivation $\de \colon \C O(S_q^2) \to \mathbb{M}_2( \C O(S_q^2))$ by the formula
\[
\de(x) := \ma{cc}{ -\de_3(x) & \de_2(x) \\ \de_1(x) & \de_3(x)} .
\]
We record that $\de(x^*) = - \de(x)^*$, see  \eqref{eq:delstar} for this.

\begin{prop}\label{p:derV}
It holds that $u \pa(x) u^* = \de(x)$ for all $x \in \C O(S_q^2)$.
\end{prop}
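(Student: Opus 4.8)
The plan is to exploit that both sides of the asserted identity are twisted derivations on $\C O(S_q^2)$ for the \emph{same} pair of twisting automorphisms, so that the identity only has to be checked on a generating set. That the left-hand side $T_1(x) := u \pa(x) u^*$ is a twisted derivation is exactly Proposition \ref{p:deri}, which gives
\[
T_1(xy) = T_1(x)\, \de_k(y) + \de_{k^{-1}}(x)\, T_1(y) .
\]
For the right-hand side $T_2(x) := \de(x)$, I would first recall that each $\de_i$ ($i \in \{1,2,3\}$) satisfies $\de_i(xy) = \de_i(x)\de_k(y) + \de_{k^{-1}}(x)\de_i(y)$, and then check entrywise that the matricial map $\de = \ma{cc}{-\de_3 & \de_2 \\ \de_1 & \de_3}$ inherits precisely
\[
T_2(xy) = T_2(x)\, \de_k(y) + \de_{k^{-1}}(x)\, T_2(y) ,
\]
where $\de_k(y)$ and $\de_{k^{-1}}(x)$ act on the matrices as scalars.

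With both $T_1$ and $T_2$ recognised as twisted derivations sharing the same twist, I would consider the set $\{x \in \C O(S_q^2) \mid T_1(x) = T_2(x)\}$. Both maps annihilate the unit, and the common Leibniz rule shows that this set is closed under multiplication; being also a linear subspace, it is a unital subalgebra. Since $\C O(S_q^2)$ is generated as a unital algebra by $A = bb^*$, $B = ab^*$ and $B^* = ba^*$, it therefore suffices to verify $u\pa(x)u^* = \de(x)$ for $x \in \{A,B,B^*\}$. Furthermore, the relations $\pa(x^*) = -\pa(x)^*$ and $\de(x^*) = -\de(x)^*$, coming from \eqref{eq:derstar} and \eqref{eq:delstar} respectively, together with unitarity of $u$, give $u\pa(x^*)u^* = -(u\pa(x)u^*)^*$; hence the identity for $B^*$ is a formal consequence of the identity for $B$, and only the two generators $A$ and $B$ need to be treated.

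For $A$ and $B$ the verification is a direct computation. I would substitute the explicit values of $\pa_e,\pa_f$ (equivalently $\pa_1 = q^{1/2}\pa_e$, $\pa_2 = q^{-1/2}\pa_f$) on the generators from \eqref{eq:derexp} and \eqref{eq:partial-on-generators}, assemble the off-diagonal matrix $\pa(x)$, and expand the triple product using $u = \ma{cc}{a^* & -qb \\ b^* & a}$ and $u^* = \ma{cc}{a & b \\ -qb^* & a^*}$. Each of the four entries is then reduced to the corresponding entry of $\de(x)$, read off from \eqref{eq:delexp}, by means of the commutation relations of $\C O(SU_q(2))$ --- used in the form $ba^* = q^{-1}a^*b$, $a^*b^* = qb^*a^*$, $bb^* = b^*b$ --- together with the sphere relations $a^*a = 1 - q^2 bb^*$ and $aa^* = 1 - bb^*$. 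I expect this bookkeeping to be the only real obstacle: one must track the powers of $q$ and the reorderings carefully so that the diagonal entries of $u\pa(A)u^*$ cancel to $0$ while the off-diagonal entries collapse to $\pm B,\pm B^*$, and similarly for $B$. Finally, the case $q = 1$ is handled by the identical scheme, now using $\de_3 = \tfrac12 \de_h$ and the $q = 1$ formulas for the (untwisted) derivations $\de_h$ and $\pa_h$.
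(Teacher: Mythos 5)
Your proposal is correct and follows essentially the same route as the paper's proof: Proposition \ref{p:deri} gives the twisted Leibniz rule for $x \mapsto u\pa(x)u^*$, the matching rule and $*$-compatibility ($\pa(x^*) = -\pa(x)^*$, $\de(x^*) = -\de(x)^*$) reduce the problem to the generators $A$ and $B$, and the identity is then checked there by direct computation with \eqref{eq:partial-on-generators}, \eqref{eq:delexp} and the defining relations of $\C O(SU_q(2))$. Your explicit remark that the agreement set is a unital subalgebra is merely a spelled-out version of the paper's implicit argument, so no substantive difference remains.
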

\begin{proof}
Using Proposition \ref{p:deri} we see that the operations $x \mapsto u \pa(x) u^*$ and $x \mapsto \de(x)$ satisfy the same twisted Leibniz rule and they also behave in the same way with respect to the adjoint operation. It therefore suffices to verify the required identity on the generators $A,B \in \C O(S_q^2)$. This can be carried out by a straightforward computation using \eqref{eq:partial-on-generators} 
and \eqref{eq:delexp} together with the defining relations for $\C O(SU_q(2))$, indeed:
\[
\begin{split}
u \pa(A) u^* & = \ma{cc}{a^* & - qb \\ b^* & a} \ma{cc}{0 & ab \\ -b^* a^* & 0} \ma{cc}{a & b \\ -qb^* & a^*} \\
& = \ma{cc}{0 & a^* a ba^* + qbb^* a^* b \\
-q b^* a bb^* - ab^* a^* a & 0} \\
&= \ma{cc}{0 &  ba^* \\ -ab^* & 0} = \de(A), \quad \T{and} \\
u \pa(B) u^* & = \ma{cc}{a^* & - qb \\ b^* & a} \ma{cc}{0 & q^{-1} a^2 \\ (b^*)^2 & 0} \ma{cc}{a & b \\ -qb^* & a^*} \\
& = \ma{cc}{-a^* a^2 b^* - q b(b^*)^2 a & q^{-1} a^* a^2 a^* - q b^2(b^*)^2 \\ 0 & q^{-1} b^* a^2 a^* + a (b^*)^2 b} \\
& = \ma{cc}{-ab^* &  q^{-1}aa^* -qbb^*  \\ 0 &  ab^* } = \de(B) . 
\qedhere
\end{split}
\]
\end{proof}

We may now show that the Berezin transform is a Lip-norm contraction: 

\begin{prop}\label{p:derVI}
Let $q \in (0,1]$ and $N \in \nn$. The Berezin transform $\be_N \colon C(S_q^2) \to C(S_q^2)$ is a Lip-norm contraction in the sense that
\[
\| \partial \be_N(x) \| \leq \| \partial(x) \|
\]
for all $x \in \C O(S_q^2)$.
\end{prop}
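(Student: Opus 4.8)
The plan is to transport the whole inequality through the unitary conjugation by $u$, where the Berezin transform acquires the equivariance needed to make the estimate transparent. Since $u \in \mathbb{M}_2(\C O(SU_q(2)))$ is a unitary, conjugation by $u$ preserves the operator norm, so Proposition \ref{p:derV} gives $\| \pa(x) \| = \| u \pa(x) u^* \| = \| \de(x) \|$ for every $x \in \C O(S_q^2)$. Because $\T{Im}(\be_N) \su \C O(S_q^2)$ by Lemma \ref{l:altII}, Proposition \ref{p:derV} also applies to $\be_N(x)$, and hence $\| \pa \be_N(x) \| = \| \de \be_N(x) \|$. It therefore suffices to establish the cleaner inequality $\| \de \be_N(x) \| \leq \| \de(x) \|$, which involves only the right action.

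Next I would commute $\be_N$ past $\de$ entrywise. Each entry of the matrix $\de \be_N(x)$ has the form $\de_i \be_N(x)$ with $i \in \{1,2,3\}$, and since $\de_1 = \de_{q^{1/2} e}$, $\de_2 = \de_{q^{-1/2} f}$ and $\de_3 = \de_{(k - k^{-1})/(q - q^{-1})}$ (respectively $\de_3 = \de_{h/2}$ when $q = 1$) are all of the form $\de_g$ for some $g \in \C U_q(\G{su}(2))$, Lemma \ref{l:derI} yields $\de_i \be_N(x) = \be_N \de_i(x)$. Applying this to all four entries shows that
\[
\de \be_N(x) = (\T{id}_{\mathbb{M}_2} \ot \be_N)(\de(x)),
\]
so that $\de \be_N(x)$ is nothing but $\be_N$ applied to each entry of $\de(x) \in \mathbb{M}_2(\C O(S_q^2))$.

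Finally I would invoke complete positivity. The transform $\be_N = (1 \ot h_N)\De$ is the composition of the coproduct $\De$, a unital $*$-homomorphism, with the slice map $1 \ot h_N$ associated to the state $h_N$; both factors are unital completely positive, hence so is $\be_N$. In particular its amplification $\T{id}_{\mathbb{M}_2} \ot \be_N$ is again unital and positive, and therefore contractive. Stringing the three steps together gives
\[
\| \pa \be_N(x) \| = \| \de \be_N(x) \| = \| (\T{id}_{\mathbb{M}_2} \ot \be_N)(\de(x)) \| \leq \| \de(x) \| = \| \pa(x) \| .
\]
The conceptual difficulty has in fact already been resolved in Propositions \ref{p:deri}--\ref{p:derV}: the derivation $\pa$ from the \emph{left} action does not commute with $\be_N$, and the entire purpose of passing to the conjugated derivation $\de$, rewritten via the \emph{right} action, is precisely to bring Lemma \ref{l:derI} into play. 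Given that machinery, the one point I expect to require genuine care is that \emph{complete} (rather than merely plain) positivity of $\be_N$ is what is needed, since it is the contractivity of the matrix amplification $\T{id}_{\mathbb{M}_2} \ot \be_N$, not of $\be_N$ alone, that drives the final estimate.
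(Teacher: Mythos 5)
Your proposal is correct and follows essentially the same route as the paper: conjugate by the fundamental unitary to replace $\pa$ with $\de$ via Proposition \ref{p:derV}, commute $\be_N$ past $\de$ entrywise using Lemma \ref{l:derI}, and conclude by the complete contractivity of the unital completely positive map $\be_N$. Your only addition is to spell out why $\T{id}_{\mathbb{M}_2} \ot \be_N$ is contractive (positivity of the amplification plus unitality), a point the paper compresses into the phrase ``$\be_N$ is a complete contraction''.
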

\begin{proof}
Since $u \in \mathbb{M}_2\big( \C O(SU_q(2))\big)$ is unitary we obtain from Proposition \ref{p:derV} that
\[
\| \partial \be_N(x) \| = \| u \cd \partial \be_N(x) \cd u^* \|
= \| \de \be_N(x) \| .
\]
Then, since $\be_N \colon C(S_q^2) \to C(S_q^2)$ is a complete contraction we get from Lemma \ref{l:derI} that
\[
\| \de \be_N(x) \| = \| \be_N \de(x) \| \leq \| \de(x) \| = \| \pa(x) \| .
\]
This proves the result of the proposition.
\end{proof}

\section{Quantum Gromov-Hausdorff convergence}\label{sec:qGH-convergence}
The aim of the  present section is to prove our main convergence results, namely that the quantum fuzzy spheres $F_q^N$ converge to the Podle{\'s} sphere $S_q^2$ as the matrix size $N$ grows, and that the Podle{\'s} spheres $S_q^2$ converge to the classical 2-sphere (with its round metric) as $q$ tends to 1. However, before approaching the actual convergence results, quite a  bit of preparatory analysis is needed. As it turns out, the key to the convergence results is that the quantum Berezin transform $\beta_N$ provides a good approximation of the identity map on the Lip unit balls, and we prove this in the sections to follow. In the following section we provide the essential upper bound on $\|\beta_N(x)-x\|$ for $x$ in the Lip unit ball, and in the next section we prove that this upper bound indeed goes to zero as $N$ tends to infinity.

\subsection{Approximation of the identity}\label{ss:approx}
Throughout this section we let $N \in \nn_0$ and $q \in (0,1]$ be fixed. We let { $\epsilon \colon C(S_q^2) \to \cc$} denote the restriction of the counit to the Podle\'s sphere and we recall that { $h_N \colon C(S_q^2) \to \cc$} is the state given by $h_N(x) := h\big((a^*)^N x a^N \big) \cd \inn{N+1}$. 
We start out by proving an equivariance property for our derivations $\pa_1,\pa_2 \colon \C O(S_q^2) \to \C O(SU_q(2))$.

\begin{lemma}\label{l:approxI}
Let $x \in \C O(S_q^2)$. It holds that
\[
(1 \ot \pa_1)\De(x) = \De \pa_1(x) \quad \mbox{and} \quad (1 \ot \pa_2) \De(x) = \De \pa_2(x) .
\]
\end{lemma}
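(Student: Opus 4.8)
The plan is to reduce both identities to the coassociativity of the coproduct. Recall that $\pa_1 = q^{1/2}\pa_e$ and $\pa_2 = q^{-1/2}\pa_f$, where the left actions are given through the dual pairing by $\pa_\eta = (1 \ot \inn{\eta,\cd})\De$. Since the scalar factors $q^{\pm 1/2}$ commute with both $1 \ot \pa_\eta$ and $\De$, it suffices to establish the single identity $(1 \ot \pa_\eta)\De(x) = \De\pa_\eta(x)$ for all $x \in \C O(S_q^2)$ and all $\eta \in \C U_q(\G{su}(2))$; specialising to $\eta = e$ and $\eta = f$ then yields the two claims simultaneously.

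Concretely, I would fix Sweedler-type notation $\De(x) = \sum x_{(1)} \ot x_{(2)}$ and expand each side. On the right, $\De\pa_\eta(x) = \sum \De(x_{(1)})\inn{\eta, x_{(2)}}$, and rewriting $\De(x_{(1)})$ via coassociativity $(\De \ot 1)\De = (1 \ot \De)\De$ turns this into $\sum x_{(1)} \ot x_{(2)}\inn{\eta, x_{(3)}}$. On the left, $(1 \ot \pa_\eta)\De(x) = \sum x_{(1)} \ot \pa_\eta(x_{(2)})$, and expanding $\pa_\eta(x_{(2)})$ and applying coassociativity once more produces the very same expression $\sum x_{(1)} \ot x_{(2)}\inn{\eta, x_{(3)}}$. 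Invariantly, both sides are nothing but $(1 \ot 1 \ot \inn{\eta,\cd})$ applied to the common value of $(\De \ot 1)\De(x)$ and $(1 \ot \De)\De(x)$, so the statement is a direct repackaging of coassociativity.

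I do not expect a genuine obstacle here; the only point requiring a little care is bookkeeping about where the maps land. Since $\pa_e(x) \in \C A_{-2}$ and $\pa_f(x) \in \C A_2$ rather than in $\C A_0 = \C O(S_q^2)$, the symbol $\De$ on the right-hand sides must be read as the full coproduct of $\C O(SU_q(2))$, whereas on the left $\De(x)$ is its restriction to the coaction $\C O(S_q^2) \to \C O(SU_q(2)) \ot \C O(S_q^2)$. These readings are compatible because every map in sight is the restriction of one and the same coassociative coproduct, so no inconsistency arises and the computation above goes through verbatim.
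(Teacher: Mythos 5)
Your proposal is correct and is essentially the paper's own argument: both reduce the identity to coassociativity of $\De$, writing each side as $(1 \ot 1 \ot \inn{\eta,\cd})$ applied to $(1\ot\De)\De(x) = (\De\ot 1)\De(x)$ for $\eta \in \{e,f\}$. Your closing remark on well-definedness (that $\De(x)$ on the left is the coaction landing in $\C O(SU_q(2)) \ot \C O(S_q^2)$, while $\De$ on the right is the full coproduct) matches the paper's opening observation that the comodule structure makes the formulae well defined.
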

\begin{proof}
First note that since $\C O(S_q^2)$ is a left $\C O(SU_q(2))$-comodule the formulae in the lemma are well defined. 
As explained in Section \ref{sec:prelim}, the derivations $\pa_1, \pa_2 \colon \C O(S_q^2) \to \C O(SU_q(2))$ are given by
\[
\pa_1(y) = q^{1/2} (1 \ot \inn{e,\cd}) \De(y) \quad \T{and} \quad \pa_2(y) = q^{-1/2} (1 \ot \inn{f,\cd}) \De(y) 
\]
for all $y \in \C O(S_q^2)$. Using the coassociativity of $\De \colon \C O(SU_q(2)) \to \C O(SU_q(2)) \ot \C O(SU_q(2))$ we then obtain that
\[
(1 \ot \pa_1) \De(x) = q^{1/2}(1 \ot 1 \ot \inn{e,\cd}) (1 \ot \De) \De(x) = q^{1/2}(\De \ot \inn{e,\cd}) \De(x) = \De(\pa_1(x)) .
\]
A similar proof applies when $\pa_1$ is replaced by $\pa_2$.
\end{proof}

The next lemma is a consequence of the above Lemma \ref{l:approxI}, and standard properties of the minimal tensor product:

\begin{lemma}\label{l:approxII}
Let $\phi \colon C(SU_q(2)) \to \cc$ be a bounded linear functional. It holds that
\[
L_{D_q}\big( (\phi \ot 1) \De(x) \big) \leq \| \phi \| \cd L_{D_q}(x)
\]
for all $x \in \C O(S_q^2)$.
\end{lemma}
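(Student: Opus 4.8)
The plan is to reduce the statement, via the equivariance established in Lemma \ref{l:approxI}, to the standard slice-map estimate for the minimal tensor product. First I would record that $y := (\phi \ot 1)\De(x)$ again lies in $\C O(S_q^2)$, since $\De(\C O(S_q^2)) \su \C O(SU_q(2)) \ot \C O(S_q^2)$, so that $L_{D_q}(y)$ is defined and equals $\max\{\|\pa_1(y)\|, \|\pa_2(y)\|\}$. It therefore suffices to prove $\|\pa_i(y)\| \leq \|\phi\| \cd \|\pa_i(x)\|$ for $i = 1,2$; I treat $i = 1$, the case $i = 2$ being identical (with $\C A_2$ and $H_-\to H_+$ in place of $\C A_{-2}$ and $H_+\to H_-$). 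Writing $\De(x) = \sum_l a_l \ot x_l$ as a finite sum with $a_l \in \C O(SU_q(2))$ and $x_l \in \C O(S_q^2)$, linearity of $\pa_1$ and of the slice map give $\pa_1(y) = \sum_l \phi(a_l)\pa_1(x_l)$, while Lemma \ref{l:approxI} identifies $\sum_l a_l \ot \pa_1(x_l) = (1 \ot \pa_1)\De(x)$ with $\De\pa_1(x)$, the coproduct on $SU_q(2)$ applied to $\pa_1(x) \in \C O(SU_q(2))$. Thus $\pa_1(y) = (\phi \ot \mathrm{id})\De\pa_1(x)$, and the whole point is to move $\phi$ past the derivation.

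The second step is the slice-map bound. I regard $\pa_1(x) = q^{1/2}\pa_e(x)$ as an element of the spectral subspace $\C A_{-2}$, whose left multiplication operator $\mu(c) := \rho(c)|_{H_+}$ sends $H_+$ into $H_-$; by definition $\|\pa_1(x)\|$ is the norm of $\mu(\pa_1(x))$ as an operator $H_+ \to H_-$. Applying $\mu$ to the second leg turns the previous identity into $\pa_1(y) = (\phi \ot \mathrm{id})\big[(\mathrm{id} \ot \mu)\De\pa_1(x)\big]$, an element of $\B B(H_+, H_-)$. Since the slice map $\phi \ot \mathrm{id} \colon C(SU_q(2)) \ot_{\min} \B B(H_+ \op H_-) \to \B B(H_+ \op H_-)$ has norm $\|\phi\|$, it remains to show that $\|(\mathrm{id} \ot \mu)\De\pa_1(x)\|_{\min} \leq \|\pa_1(x)\|$.

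The delicate point — and the step I expect to be the main obstacle — is that $L_{D_q}$ is built from operator norms on $H_\pm$ rather than from the ambient $C^*$-norm of $\C O(SU_q(2))$, so a naive estimate $\|(\mathrm{id} \ot \mu)\De\pa_1(x)\|_{\min} \leq \|\De\pa_1(x)\| \leq \|\pa_1(x)\|_{C^*}$ would overshoot, since the block $\rho(\pa_1(x))|_{H_+}$ can have strictly smaller norm than $\rho(\pa_1(x))$ on all of $L^2(SU_q(2))$. To obtain the sharp bound I would exploit that the restriction of $\De$ to $\C A_{\pm 1}$ is implemented by unitary corepresentations $U_\pm$ of $C(SU_q(2))$ on $H_\pm$; this is where bi-invariance of the Haar state enters, through the identity $(\mathrm{id} \ot h)(\De(z)^*\De(z)) = h(z^*z) \cd 1$, which shows the coaction is isometric for the $L^2$-inner product. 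Multiplicativity of $\De$ then makes left multiplication equivariant, yielding the covariance relation $(\mathrm{id} \ot \mu)\De(c) = U_-(1 \ot \mu(c))U_+^*$ for $c \in \C A_{-2}$. As $U_\pm$ are unitary, this gives $\|(\mathrm{id} \ot \mu)\De\pa_1(x)\|_{\min} = \|\mu(\pa_1(x))\| = \|\pa_1(x)\|$, and combining the three steps produces $\|\pa_1(y)\| \leq \|\phi\| \cd \|\pa_1(x)\|$, hence $L_{D_q}(y) \leq \|\phi\| \cd L_{D_q}(x)$.
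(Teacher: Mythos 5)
Your proposal is correct and follows exactly the route the paper intends: the paper gives no detailed proof, stating only that the lemma ``is a consequence of Lemma \ref{l:approxI} and standard properties of the minimal tensor product'', and your argument --- slicing the identity $\pa_i\big((\phi\ot 1)\De(x)\big) = (\phi\ot\mathrm{id})\De\pa_i(x)$ from Lemma \ref{l:approxI} by the norm-$\|\phi\|$ slice map --- is precisely that argument. Your additional unitary-implementation step, using that the coaction restricts to unitary corepresentations on the spectral subspaces $H_\pm$ (isometric by invariance of the Haar state, surjective by the Peter--Weyl decomposition) so that $\|(\mathrm{id}\ot\mu)\De\pa_1(x)\|_{\min} = \|\pa_1(x)\|$ with the \emph{restricted} operator norms, is a valid and welcome unpacking of the ``standard properties'' the authors invoke, and it correctly identifies the one genuinely delicate point, namely that $L_{D_q}$ is computed from compressions to $H_\pm$ rather than from the ambient $C^*$-norm.
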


We let $d_q(h_N,\epsilon) \in [0,\infty)$ denote the Monge-Kantorovi\v{c} distance between the two states $h_N,\epsilon \colon C(S_q^2) \to \cc$. Recall that this is defined as 
\[
d_q(h_N,\epsilon):=\sup\big\{ |h_N(x)-\ep(x)| \mid x\in \C O(S_q^2), L_{D_q}(x)\leq 1\big\} .
\]

\begin{prop}\label{p:approxIII}
We have the inequality
\[
\| x - \be_N(x) \| \leq d_q(h_N,\epsilon) \cd L_{D_q}(x)
\]
for all $x \in \C O(S_q^2)$.
\end{prop}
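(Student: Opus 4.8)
The plan is to reduce the estimate to a single application of the Monge-Kantorovi\v{c} inequality combined with the Lip-norm contractivity from Lemma \ref{l:approxII}, the crucial first step being to rewrite the difference $x - \be_N(x)$ by means of the counit. Since $\De \colon \C O(S_q^2) \to \C O(SU_q(2)) \ot \C O(S_q^2)$ is the restriction of the coproduct and $\epsilon$ is the restriction of the counit, the counit axiom $(\T{id} \ot \epsilon)\De = \T{id}$ on $\C O(SU_q(2))$ gives $x = (1 \ot \epsilon)\De(x)$. As $\be_N(x) = (1 \ot h_N)\De(x)$ by definition, I would conclude that
\[
x - \be_N(x) = \big(1 \ot (\epsilon - h_N)\big)\De(x),
\]
so that the bounded functional $\epsilon - h_N$ now acts only on the second tensor leg.

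Next I would estimate the norm on the left by testing against bounded linear functionals. For any $\mu \in C(SU_q(2))^*$ with $\| \mu \| \leq 1$, the slice-map identity for the tensor product yields
\[
\mu\big( x - \be_N(x) \big) = \big(\mu \ot (\epsilon - h_N)\big)\De(x) = (\epsilon - h_N)\big( (\mu \ot 1)\De(x) \big) ,
\]
and since $\De(x)$ lies in the algebraic tensor product $\C O(SU_q(2)) \ot \C O(S_q^2)$, the element $y_\mu := (\mu \ot 1)\De(x)$ is a finite linear combination of basis elements of $\C O(S_q^2)$, hence belongs to $\C O(S_q^2)$. This is precisely the regularity needed in order to invoke both the definition of $d_q(h_N,\epsilon)$ and Lemma \ref{l:approxII}.

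I would then combine three ingredients. First, the defining property of the Monge-Kantorovi\v{c} distance gives $|(\epsilon - h_N)(y_\mu)| \leq d_q(h_N,\epsilon) \cd L_{D_q}(y_\mu)$, valid for all $y_\mu \in \C O(S_q^2)$ by homogeneity and the fact that $\epsilon$ and $h_N$ are states and hence agree on the scalar multiples of the unit. Second, Lemma \ref{l:approxII} applied to the functional $\mu$ bounds $L_{D_q}(y_\mu) = L_{D_q}\big( (\mu \ot 1)\De(x) \big) \leq \| \mu \| \cd L_{D_q}(x) \leq L_{D_q}(x)$. Third, the $C^*$-identity $\| z \| = \sup\{ |\mu(z)| \mid \mu \in C(SU_q(2))^*, \, \| \mu \| \leq 1 \}$, applied to $z = x - \be_N(x) \in \C O(S_q^2) \su C(SU_q(2))$, lets me pass from the functional estimates back to the norm. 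Chaining these together and taking the supremum over $\mu$ produces exactly
\[
\| x - \be_N(x) \| \leq d_q(h_N,\epsilon) \cd L_{D_q}(x) .
\]

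The only genuinely delicate point is the opening move: one must recognise that $\epsilon - h_N$ should be slid onto a single tensor leg via the counit identity, so that after testing against a norm-one functional $\mu$ the Monge-Kantorovi\v{c} \emph{distance}---rather than the (potentially large) operator norm of $\epsilon - h_N$---controls the estimate. Once this reformulation is in place, the contractivity estimate of Lemma \ref{l:approxII} does all the remaining work, and the argument reduces to a short duality computation.
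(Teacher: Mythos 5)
Your proposal is correct and follows essentially the same route as the paper: both rewrite $x - \be_N(x) = \big(1 \ot (\epsilon - h_N)\big)\De(x)$ via the counit identity, test against contractive functionals to slice off the first leg, and then chain the definition of $d_q(h_N,\epsilon)$ with the contractivity estimate of Lemma \ref{l:approxII}. The only cosmetic difference is that the paper takes the supremum over vector functionals $\phi_{\eta,\ze}(y) = \inn{\eta, \rho(y)\ze}$ coming from the GNS representation, whereas you invoke Hahn--Banach duality over all norm-one functionals on $C(SU_q(2))$; both recover the norm of $x - \be_N(x)$ and the arguments are otherwise identical.
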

\begin{proof}
Let $x \in \C O(S_q^2)$ be given. We record that $x = (1 \ot \epsilon) \De(x)$ and hence 
\[
x - \be_N(x) = \big( 1 \ot ( \epsilon - h_N) \big) \De(x) .
\]
Notice now that $x - \be_N(x) \in C(S_q^2) \su \B B\big( L^2(S_q^2) \big)$ and let $\eta , \ze \in L^2(SU_q(2))$ with $\| \eta \| , \| \ze \| \leq 1$ be given. We let $\phi_{\eta,\ze} \colon C(SU_q(2)) \to \cc$, $\phi_{\eta,\ze}(y) := \inn{\eta,\rho(y)\ze}$, denote the associated contractive linear functional, and compute that
\[
\phi_{\eta,\ze}( x - \be_N(x) )  = (\epsilon - h_N)( \phi_{\eta,\ze} \ot 1) \De(x) .
\]
Using the definition of the Monge-Kantorovi\v{c} distance together with Lemma \ref{l:approxII} we then obtain that
\[
\begin{split}
\big| \phi_{\eta,\ze}( x - \be_N(x) ) \big| 
& \leq d_q(h_N,\epsilon) \cd L_{D_q}\big( ( \phi_{\eta,\ze} \ot 1) \De(x) \big) \\
& \leq d_q(h_N,\epsilon) \cd \| \phi_{\eta,\ze} \| \cd L_{D_q}(x) 
\leq d_q(h_N,\epsilon) \cd L_{D_q}(x) .
\end{split}
\]
Taking the supremum over vectors $\eta$ and $\ze \in L^2(SU_q(2))$ with $\| \eta \|, \| \ze \| \leq 1$ we obtain that
\[
\| x - \be_N(x) \| \leq d_q(h_N,\epsilon) \cd L_{D_q}(x) . \qedhere
\]
\end{proof}

\subsection{Convergence to the counit}
Throughout this section we let $q \in (0,1]$ be fixed. We see from Proposition \ref{p:approxIII} that in order to establish that the Berezin transform provides a good approximation of the identity map on the Lip unit ball, we only need to verify that $\lim_{N\to \infty}d_q(h_N, \epsilon)=0$. Since we already know that $(C(S_q^2),L_{D_q})$ is a compact quantum metric space, the convergence in the Monge-Kantorovi\v{c} metric is equivalent to convergence in the weak$^*$-topology. 
We apply the notation $\C O(A,1) \su C(S_q^2)$ for the smallest unital $*$-subalgebra containing the element $A \in C(S_q^2)$.

\begin{prop}\label{prop:convergence-to-counit}
Let $q\in (0,1]$. The sequence of states $\{h_N\}_{N = 0}^\infty$ converges to $\epsilon : C(S_q^2) \to \cc$ in the weak$^*$-topology and hence $\displaystyle \lim_{N\to\infty} d_q(h_N,\epsilon)=0$.
\end{prop}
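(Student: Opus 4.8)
The plan is to establish the weak$^*$ convergence $h_N \to \epsilon$ directly, and then read off the metric statement for free. Since $(C(S_q^2),L_{D_q})$ is a compact quantum metric space by Theorem~\ref{t:specmetpod}, the Monge-Kantorovi\v{c} metric $d_q = \rho_{L_{D_q}}$ metrises the weak$^*$-topology on the (weak$^*$-compact) state space, so weak$^*$ convergence of $h_N$ to $\epsilon$ is equivalent to $d_q(h_N,\epsilon)\to 0$; this disposes of the ``hence''. For the weak$^*$ convergence itself, the $h_N$ and $\epsilon$ are uniformly bounded (they are states), so it suffices to check $h_N(x)\to\epsilon(x)$ on the dense coordinate algebra $\C O(S_q^2)$, and in fact on the vector space basis $\{A^iB^j,\, A^i(B^*)^k\}$. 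As $\epsilon$ is a character with $\epsilon(A)=\epsilon(B)=\epsilon(B^*)=0$, it annihilates every basis element except $1$, on which it agrees with the unital map $h_N$. Thus the task reduces to showing $h_N(x)\to 0$ for every basis element $x\neq 1$.

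Next I would reduce all of these to the single generator $A$, i.e. to $\C O(A,1)$. Using the relations in $\C O(SU_q(2))$ one checks the normal-ordering identities $A\, a = q^2 a A$ and $AB = q^2 BA$, together with $B^*B = A - q^2A^2$ and $BB^* = q^{-2}(A-A^2)$, both lying in $\C O(A,1)$; iterating these shows that $(B^*)^jB^j$ and $B^k(B^*)^k$ are polynomials in $A$, and applying the character $\epsilon$ (which kills $B,B^*$) shows these polynomials have vanishing constant term. For a basis element containing a factor of $B$ (the $B^*$ case being handled by $h_N(x^*)=\overline{h_N(x)}$), the Cauchy-Schwarz inequality for the state $h_N$ gives
\[
|h_N(A^iB^j)|^2 \leq h_N(A^{2i})\cd h_N\big((B^*)^jB^j\big),
\]
where the first factor is bounded by $\|A\|^{2i}\leq 1$ and the second is a polynomial in $A$ with no constant term, hence a combination of terms $h_N(A^m)$ with $m\geq 1$. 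Since $0\leq A\leq 1$ we have $A^m\leq A$, so $h_N(A^m)\leq h_N(A)$, and the whole problem collapses to the single limit $h_N(A)\to 0$.

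Finally I would compute $h_N(A)$ explicitly; note that $h_N(\cd)=\inn{N+1}\cd h\big((a^*)^N(\cd)a^N\big)$ is the vector state at the unit vector $\xi_N:=\inn{N+1}^{1/2}a^N$, which concentrates near the ``north pole'' as $N\to\infty$. From the definition and $A a^N=q^{2N}a^N A$ one gets $h_N(A)=\inn{N+1}q^{2N}h\big(c_N A\big)$ with $c_N:=(a^*)^Na^N$. An easy induction from $a^*a=1-q^2A$ and $Aa=q^2aA$ (via $c_{N+1}=a^*c_N a$) yields $c_N=\prod_{k=1}^N(1-q^{2k}A)$, whence the telescoping relation $c_N A=q^{-2(N+1)}(c_N-c_{N+1})$. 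Combining this with $\inn{N+1}h(c_N)=\inn{N+1}h\big(u^N_{00}(u^N_{00})^*\big)=1$, which follows from \eqref{eq:haarmatrixII}, gives the closed formula
\[
h_N(A)=\frac{q^{2N}}{\inn{N+2}} ,
\]
which tends to $0$ as $N\to\infty$ for every $q\in(0,1]$ (it equals $1/(N+2)$ at $q=1$, while $q^{2N}\to 0$ and $\inn{N+2}\to(1-q^2)^{-1}$ for $q<1$).

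I expect the main obstacle to be the reduction step rather than the final computation: one must verify carefully that the relevant products of $B$ and $B^*$ normal-order into polynomials in $A$ with vanishing constant term, so that the Cauchy-Schwarz estimate genuinely transports convergence on all of $\C O(S_q^2)$ down to convergence on the commutative subalgebra $\C O(A,1)$. Once this reduction and the operator inequality $A^m\leq A$ are in place, the explicit evaluation of $h_N(A)$ through the telescoping product is routine.
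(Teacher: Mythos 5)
Your proposal is correct: I checked the commutation relations ($Aa = q^2aA$, $AB = q^2BA$, $B^*B = A - q^2A^2$, $BB^* = q^{-2}(A-A^2)$), the induction $c_N = \prod_{k=1}^N(1-q^{2k}A)$, the normalisation $\inn{N+1}\cd h(c_N)=1$ from \eqref{eq:haarmatrixII}, and the closed formula $h_N(A) = q^{2N}/\inn{N+2}$, which is consistent with the paper's data via $A = q^{-2}(1-a^*a)$ and $h_N(a^*a)=\inn{N+1}/\inn{N+2}$. However, your route through the $B$-containing basis elements is genuinely different from the paper's. The paper observes that $h_N$ vanishes \emph{identically} (not just asymptotically) on every element of the form $yB^i$ or $y(B^*)^i$ with $y\in\C O(A,1)$ and $i\geq 1$: since the Haar state is invariant under the modular automorphism $\nu$ of \eqref{eq:modular-function}, one can move $a^N$ across at the cost of a power of $q$, and the resulting element lies in a nonzero spectral subspace for the right circle action $\si_R$, on which $h$ vanishes. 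With this structural vanishing in hand, the entire proposition collapses to the one-line computation $h_N\big((a^*)^ka^k\big) = \inn{N+1}/\inn{N+k+1} = 1 - q^{2(N+1)}\inn{k}/\inn{N+k+1}\to 1$ on $\C O(A,1)$, so no Cauchy-Schwarz estimate or reduction $A^m\leq A$ is needed. Your approach replaces this exact vanishing by a quantitative estimate -- Cauchy-Schwarz for the state $h_N$ plus normal-ordering of $(B^*)^jB^j$ and $B^k(B^*)^k$ into polynomials in $A$ with vanishing constant term -- which is more laborious but also more elementary: it uses only the defining relations of $\C O(SU_q(2))$ and positivity, avoiding the twisted-trace property \eqref{eq:modular} entirely, and it incidentally exhibits an explicit rate of decay through $h_N(A)$. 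One small citation quibble: Theorem \ref{t:specmetpod} as stated covers $q\in(0,1)$ only; for $q=1$ the compact quantum metric space property of $(C(S^2),L_{D_1})$, which you invoke to convert weak$^*$ convergence into $d_q(h_N,\epsilon)\to 0$, comes from the remark following Remark \ref{r:lipspec} -- the paper relies on the same fact, so this is not a gap in substance.
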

\begin{proof}
It suffices to show that
\begin{equation}\label{eq:limitstates}
\lim_{N\to \infty} h_N(x)=\epsilon(x),
\end{equation}
for all $x$ in the norm-dense unital $*$-subalgebra $\C O(S_q^2) \su C(S_q^2)$. Notice next that $h_N(x) = 0 = \epsilon(x)$ whenever $x = y B^i$ or $x = y (B^*)^i$ for some $i \in \nn$ and some $y \in \C O(A,1)$. Indeed, this is clear for the counit and for the state $h_N$ this follows since the Haar state is invariant under the modular automorphism $\nu : \C O(SU_q(2)) \to \C O(SU_q(2))$. When proving \eqref{eq:limitstates} we may thus restrict our attention to the case where $x \in \C O(A,1)$. Using next that
\[
\C O(A,1) = \T{span}_{\cc} \big\{ (a^*)^k a^k \mid k \in \nn_0 \big\}
\]
we only need to show that $h_N\big((a^*)^k a^k\big)$ converges to $\epsilon( (a^*)^k a^k\big) = 1$ for all $k \in \nn_0$. But this follows from the computation here below, where we utilize the fact that $(a^*)^{N+k}=u_{00}^{N+k}$ together with \eqref{eq:haarmatrixII}: 
\begin{align*}
 h_N\left((a^*)^k a^k\right)&=   \inn{N+1} h\big( (a^*)^{N+k} a^{N+k} \big)  = \tfrac{ \inn{N+1}}{ \inn{N + k + 1}}  = 1 - q^{2(N+1)} \tfrac{ \inn{k} }{ \inn{N + k + 1}} \underset{N\to \infty}{ \longrightarrow} 1 . \qedhere
\end{align*}
\end{proof}

\begin{cor}\label{cor:fuzzy-approx}
Let $q\in (0,1]$. For each $\ep>0$ there exists an $N_0\in \nn_0$ such that 
\[
\|x-\beta_N(x)\|\leq \ep \cd L_{D_q}(x)
\]
for all $N \geq N_0$ { and all $x \in \C O(S_q^2)$}.
\end{cor}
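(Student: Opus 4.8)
The plan is to read off the statement as an immediate consequence of the two results established in the preceding subsections, so that essentially no new work is required. The key observation is that Proposition \ref{p:approxIII} already supplies a bound of exactly the right shape, namely
\[
\| x - \be_N(x) \| \leq d_q(h_N,\epsilon) \cd L_{D_q}(x)
\]
for every $x \in \C O(S_q^2)$, in which the coefficient $d_q(h_N,\epsilon)$ is independent of $x$. Thus the whole task reduces to making this coefficient small uniformly in $x$, which is precisely what Proposition \ref{prop:convergence-to-counit} provides: the Monge-Kantorovi\v{c} distances $d_q(h_N,\epsilon)$ tend to $0$ as $N \to \infty$.

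Concretely, I would first fix $q \in (0,1]$ and an arbitrary $\ep > 0$. Invoking the limit $\lim_{N\to\infty} d_q(h_N,\epsilon) = 0$ from Proposition \ref{prop:convergence-to-counit}, I would select an index $N_0 \in \nn_0$ with the property that $d_q(h_N,\epsilon) \leq \ep$ for all $N \geq N_0$. Then, for any such $N$ and any $x \in \C O(S_q^2)$, the estimate of Proposition \ref{p:approxIII} gives
\[
\| x - \be_N(x) \| \leq d_q(h_N,\epsilon) \cd L_{D_q}(x) \leq \ep \cd L_{D_q}(x),
\]
which is exactly the asserted inequality. Since $x$ was arbitrary in $\C O(S_q^2)$, this holds for all $x$ simultaneously, and the choice of $N_0$ depends only on $\ep$ (and on the fixed $q$), as required.

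There is genuinely no analytic obstacle at this stage, since all the substance has been isolated into the earlier propositions; the only point worth being careful about is that $N_0$ is chosen uniformly in $x$, which is automatic because the factor $d_q(h_N,\epsilon)$ in Proposition \ref{p:approxIII} does not depend on $x$. In that sense the corollary is best regarded as the packaging of the approximation bound of Subsection \ref{ss:approx} together with the weak$^*$-convergence of Subsection \ref{ss:approx}'s successor into the uniform-on-the-Lip-unit-ball statement that will be fed into the Gromov-Hausdorff convergence arguments of Section \ref{sec:qGH-convergence}.
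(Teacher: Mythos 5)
Your argument is correct and coincides with the paper's own proof: the authors likewise obtain the corollary by combining the bound $\| x - \be_N(x) \| \leq d_q(h_N,\epsilon) \cdot L_{D_q}(x)$ from Proposition \ref{p:approxIII} with the convergence $\lim_{N\to\infty} d_q(h_N,\epsilon)=0$ from Proposition \ref{prop:convergence-to-counit}. Your additional remark that $N_0$ can be chosen uniformly in $x$ because $d_q(h_N,\epsilon)$ does not depend on $x$ is exactly the (implicit) point that makes the paper's two-line proof work.
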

\begin{proof}
By Proposition \ref{p:approxIII} we have the inequality
\[
\| x - \be_N(x) \| \leq d_q(h_N,\epsilon) \cd L_{D_q}(x),
\]
and by Proposition \ref{prop:convergence-to-counit} we have that $\lim_{N\to \infty} d_q(h_N,\epsilon)=0$. 
\end{proof}

The above corollary { in combination with Proposition \ref{p:derVI}} now makes it an easy task to show that that the quantum fuzzy spheres $F^N_q$ converge to the Podle{\'s} sphere $C(S_q^2)$ as $N$ approaches infinity, and we carry out the details of this argument in Section \ref{subsec:quantum-fuzzy-to-podles}. However, in order to show that $C(S_q^2)$ converges to $C(S^2)$ as $q$ tends to $1$, we need to { estimate the distance $d_q(h_N,\epsilon)$ in a suitably uniform manner} with respect to the deformation parameter $q$.
As a first step in this direction we show that the states $h_N$ and $\epsilon \colon C(S_q^2) \to \cc$ may be restricted to the unital $C^*$-subalgebra $C^*(A,1) \su C(S_q^2)$ without changing their Monge-Kantorovi\v{c} distance. This will play out to our advantage since we already carried out a careful analysis of the compact quantum metric space $\big( C^*(A,1), L_{D_q} \big)$ in \cite{GKK:QI}. In fact, $C^*(A,1)$ is a commutative unital $C^*$-algebra and the Lip-norm $L_{D_q} \colon C^*(A,1) \to [0,\infty]$ comes from an explicit metric on the spectrum of the selfadjoint positive operator $A$. We shall give more details on these matters in the next section.\\
Letting $i : C^*(A,1) \to C(S_q^2)$ denote the inclusion we specify that
\begin{equation}\label{eq:metricres}
\begin{split}
d_q(h_N \ci i, \epsilon \ci i) & := \big\{ | h_N(x) - \epsilon(x) | \mid x \in \C O(A,1) \, , \, \, L_{D_q}(x) \leq 1 \big\} \quad \T{and} \\
d_q(h_N,\epsilon) & := \big\{ | h_N(x) - \epsilon(x) | \mid x \in \C O(S_q^2) \, , \, \, L_{D_q}(x) \leq 1 \big\} .
\end{split}
\end{equation}
In order to prove that these two quantities agree, we define the strongly continuous circle action $\si_R \colon S^1 \ti C(SU_q(2)) \to C(SU_q(2))$ by the formulae
\[
\si_R(z,a) := z a \quad \T{and} \quad \si_R(z,b) := z^{-1} b 
\]
This circle action gives rise to the operation $\Phi_0 \colon C(SU_q(2)) \to C(SU_q(2))$ given by
\[
\Phi_0(x) := \frac{1}{2 \pi} \int_0^{2\pi} \si_R(e^{it},x) \, dt .
\]
For $z \in S^1$ and $x \in \C O(S_q^2)$ we record the relation
\[
\si_R(z, \pa(x)) = \pa( \si_R(z,x)),
\]
which can be proved by a direct computation on the generators $A, B$ and $B^*$ and an application of the Leibniz rule. { Using that $\pa \colon \C O(S_q^2) \to \mathbb{M}_2\big(C(SU_q(2))\big)$ is closable (see the discussion after Theorem \ref{t:spectrip})} we then obtain the identity
\[
\pa( \Phi_0(x) ) = \Phi_0( \pa(x) )
\]
for all $x \in \C O(S_q^2)$. Remark that the restriction of $\Phi_0$ to $C(S_q^2)$ {yields a} conditional expectation onto $C^*(A,1)$ which maps the coordinate algebra $\C O(S_q^2)$ onto $\C O(A,1)$. These observations immediately yield the next result: 

\begin{lemma}\label{l:conmetII}
Let $q \in (0,1]$. We have the inequality
\[
L_{D_q}( \Phi_0(x)) \leq L_{D_q}(x)
\]
for all $x \in \C O(S_q^2)$. 
\end{lemma}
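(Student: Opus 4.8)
The plan is to reduce the statement to the fact that the averaging operator $\Phi_0$ becomes norm-contractive once it is extended entrywise to the matrix algebra $\mathbb{M}_2\big(C(SU_q(2))\big)$. Recall that the Lip-norm is computed by $L_{D_q}(x) = \|\pa(x)\|$, the $C^*$-norm of the off-diagonal matrix $\pa(x) \in \mathbb{M}_2\big(C(SU_q(2))\big)$, since the operator norm of an off-diagonal $2 \times 2$ matrix equals the maximum of the norms of its two entries $\pa_1(x)$ and $\pa_2(x)$. The intertwining identity $\pa(\Phi_0(x)) = \Phi_0(\pa(x))$ recorded just above, together with the fact that $\Phi_0$ maps $\C O(S_q^2)$ into $\C O(A,1) \su \C O(S_q^2)$, shows that $L_{D_q}(\Phi_0(x))$ is genuinely computed as $\|\Phi_0(\pa(x))\|$. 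Hence it suffices to prove that $\|\Phi_0(y)\| \leq \|y\|$ for $y = \pa(x)$.

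The key step is to observe that for each fixed $z \in S^1$ the map $\si_R(z,\cd)$ is a $*$-automorphism of $C(SU_q(2))$, and therefore so is its entrywise extension to $\mathbb{M}_2\big(C(SU_q(2))\big)$; being a $*$-automorphism of a $C^*$-algebra it is isometric. Since $\Phi_0$ is the average $\Phi_0(y) = \tfrac{1}{2\pi} \int_0^{2\pi} \si_R(e^{it},y)\, dt$, and this integral converges in norm by strong continuity of $\si_R$, the integral form of the triangle inequality yields
\[
\|\Phi_0(y)\| \leq \tfrac{1}{2\pi} \int_0^{2\pi} \|\si_R(e^{it},y)\|\, dt = \|y\| ,
\]
where the final equality uses that each $\si_R(e^{it},\cd)$ is isometric on the matrix algebra.

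Combining the two observations gives, for all $x \in \C O(S_q^2)$,
\[
L_{D_q}(\Phi_0(x)) = \|\pa(\Phi_0(x))\| = \|\Phi_0(\pa(x))\| \leq \|\pa(x)\| = L_{D_q}(x) ,
\]
as desired. I do not anticipate a genuine obstacle: the only points requiring a little care are that the contraction estimate must be run at the level of the matrix algebra $\mathbb{M}_2\big(C(SU_q(2))\big)$ rather than of $C(SU_q(2))$ itself, and that strong continuity of the circle action $\si_R$ should be invoked to justify moving the norm inside the defining integral. Both are routine once the intertwining identity $\pa \Phi_0 = \Phi_0 \pa$ is available.
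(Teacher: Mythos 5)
Your proof is correct and takes essentially the same route as the paper, which obtains the lemma immediately from the intertwining identity $\pa \Phi_0 = \Phi_0 \pa$ together with the contractivity of $\Phi_0$ (there phrased via $\Phi_0$ being a conditional expectation); your averaging-over-$*$-automorphisms argument is exactly the standard justification of that contractivity. One cosmetic remark: since $\pa(\Phi_0(x))$ is off-diagonal with entries $\Phi_0(\pa_1(x))$ and $\Phi_0(\pa_2(x))$, and the norm of an off-diagonal matrix is the maximum of the entry norms, the estimate could equally well be run entrywise in $C(SU_q(2))$, so the passage to $\mathbb{M}_2\big(C(SU_q(2))\big)$ is harmless but not strictly needed.
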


As alluded to above, we have the following:

\begin{lemma}\label{l:conmetIII}
Let $q \in (0,1]$. It holds that $d_q( h_N,\epsilon) = d_q( h_N \ci i,\epsilon \ci i)$ for all $N \in \nn_0$.
\end{lemma}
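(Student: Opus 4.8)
The plan is to prove the two inequalities $d_q(h_N\ci i,\epsilon\ci i)\leq d_q(h_N,\epsilon)$ and $d_q(h_N,\epsilon)\leq d_q(h_N\ci i,\epsilon\ci i)$ separately. The first is immediate from the definitions in \eqref{eq:metricres}: since $\C O(A,1)\su\C O(S_q^2)$ and the seminorm $L_{D_q}$ appearing in both suprema is the same, the supremum defining $d_q(h_N\ci i,\epsilon\ci i)$ runs over a subset of the test elements allowed for $d_q(h_N,\epsilon)$, and is therefore no larger.

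For the reverse inequality the idea is to push an arbitrary test element into $\C O(A,1)$ by means of the averaging map $\Phi_0$, without increasing the seminorm or altering the value of $h_N-\epsilon$. Concretely, let $x\in\C O(S_q^2)$ with $L_{D_q}(x)\leq 1$. Then $\Phi_0(x)\in\C O(A,1)$, and Lemma \ref{l:conmetII} gives $L_{D_q}(\Phi_0(x))\leq L_{D_q}(x)\leq 1$, so $\Phi_0(x)$ is admissible for $d_q(h_N\ci i,\epsilon\ci i)$. Provided we can establish the invariance identities $h_N\ci\Phi_0 = h_N$ and $\epsilon\ci\Phi_0 = \epsilon$ on $\C O(S_q^2)$, we then obtain
\[
|h_N(x)-\epsilon(x)| = |h_N(\Phi_0(x))-\epsilon(\Phi_0(x))|\leq d_q(h_N\ci i,\epsilon\ci i),
\]
and taking the supremum over all such $x$ yields $d_q(h_N,\epsilon)\leq d_q(h_N\ci i,\epsilon\ci i)$, completing the argument.

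It thus remains to verify the two invariance identities. Since $\Phi_0$ is the average of the $*$-automorphisms $\si_R(z,\cd)$ over $z\in S^1$, it suffices to show that both states are $\si_R$-invariant, i.e.\ that $\om\ci\si_R(z,\cd)=\om$ for all $z\in S^1$ when $\om\in\{h_N,\epsilon\}$; the desired identities then follow by integrating over $z$ and invoking continuity of the states. For the counit this is a direct computation on the generators: one has $\si_R(z,A)=A$, $\si_R(z,B)=z^2 B$ and $\si_R(z,B^*)=z^{-2}B^*$, so the monomials $A^i$ are $\si_R$-invariant, while on each remaining basis monomial $A^iB^j$ and $A^i(B^*)^k$ (with $j,k\geq 1$) both $\epsilon$ and $\epsilon\ci\si_R(z,\cd)$ vanish because $\epsilon(B)=\epsilon(B^*)=0$; hence $\epsilon\ci\si_R(z,\cd)=\epsilon$. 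For $h_N$ the key observation is that $\si_R(z,\cd)$ is a $*$-automorphism with $\si_R(z,a)=za$, so that $(a^*)^N\,\si_R(z,x)\,a^N=\si_R\big(z,(a^*)^N x a^N\big)$ for all $x$; combining this with the $\si_R$-invariance of the Haar state $h$, which holds because $\si_R$ acts diagonally on the basis $\{u^n_{ij}\}$ by scalars of modulus one and $h$ vanishes on all $u^n_{ij}$ with $n\geq 1$, gives $h_N(\si_R(z,x))=h_N(x)$.

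The main obstacle is precisely the invariance $h_N\ci\Phi_0=h_N$: unlike $\epsilon$, the state $h_N$ is defined through conjugation by the \emph{non-invariant} elements $(a^*)^N$ and $a^N$, so its $\si_R$-invariance is not manifest and must be extracted from the interplay between the automorphism property of $\si_R(z,\cd)$ and the bi-invariance of the Haar state, as indicated above.
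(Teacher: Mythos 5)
Your proof is correct and follows essentially the same route as the paper's: the easy inequality by restriction of the test set, and the reverse inequality by averaging with $\Phi_0$, invoking Lemma \ref{l:conmetII} and the $\si_R$-invariance of $h_N$ and $\epsilon$. The only difference is that you spell out the invariance identities $h_N \ci \si_R(z,\cd) = h_N$ and $\epsilon \ci \si_R(z,\cd) = \epsilon$ (via the phase cancellation $(a^*)^N\si_R(z,x)a^N = \si_R\big(z,(a^*)^N x a^N\big)$ and the behaviour of $h$ and $\epsilon$ on the basis $\{u^n_{ij}\}$), which the paper simply asserts.
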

\begin{proof}
Let $N \in \nn_0$ be given. The inequality $d_q(h_N \ci i, \epsilon \ci i) \leq d_q(h_N,\epsilon)$ is clearly satisfied. To prove the remaining inequality, let $x \in \C O(S_q^2)$ with $L_{D_q}(x) \leq 1$ be given. Using Lemma \ref{l:conmetII} we then obtain that $L_{D_q}(\Phi_0(x))\leq 1$. Next, since $h_N\big( \si_R(z, x) \big) = h_N(x)$ and $\epsilon\big( \si_R(z,x) \big)$ it follows that $h_N(x)= h_N( \Phi_0(x))$ and $\epsilon(x) =\epsilon(\Phi_0(x) )$ and hence that 
\[
\big| h_N(x) - \epsilon(x) \big| = \big| (h_N \ci i) (\Phi_0(x)) - (\epsilon \ci i)(\Phi_0(x)) \big| \leq d_q( h_N \ci i, \epsilon \ci i) .
\]
This proves the present lemma.
\end{proof}

\subsection{Uniform approximation of the identity}\label{ss:conmet}
In this section we are no longer considering $q \in (0,1]$ to be fixed but rather as a variable deformation parameter. For this reason we decorate our generators $a,b \in \C O(SU_q(2))$ and $A,B \in \C O(S_q^2)$ with an extra subscript, e.g. writing $A_q$ instead of $A$. Likewise, we put
\[
\inn{n}_q := \inn{n} = \sum_{m = 0}^{n-1} q^{2n} .
\]
We are, however, fixing a $\de \in (0,1)$ and restrict our attention to the case where $q \in [\de,1]$. \\

We start out by shortly reviewing some of the results obtained in \cite{GKK:QI}. The unital $C^*$-subalgebra $C^*(A_q,1) \su C(S_q^2)$ is commutative and is therefore isomorphic to the continuous functions on the spectrum of $A_q$. For $q \neq 1$ this spectrum is given by $X_q := \{ q^{2m} \mid m \in \nn_0 \} \cup \{0\} \su [0,1]$ and for $q = 1$ the spectrum $X_q$ agrees with the whole closed unit interval $[0,1]$. We are from now on tacitly identifying the $C^*$-algebra $C^*(A_q,1)$ with the $C^*$-algebra $C(X_q)$.
It was proved in \cite{GKK:QI} that the restriction of the seminorm $L_{D_q} \colon C(S_q^2) \to [0,\infty]$ to $C^*(A_q,1)$ agrees with the Lipschitz constant seminorm arising from a metric $\rho_q : X_q \ti X_q \to [0,\infty)$. For $q \neq 1$ this metric is given by the explicit formula
\begin{align}\label{eq:metric-formula}
\rho_q(q^{2m}, q^{2l}) = \Big| \sum_{k = m}^\infty \frac{(1 - q^2) q^k}{\sqrt{1 - q^{2(k+1)}}}
- \sum_{k = l}^\infty \frac{(1 - q^2) q^k}{\sqrt{1 - q^{2(k+1)}}} \Big| 
\end{align}
for all $m,l \in \nn_0$. For $q = 1$, the metric is given by 
\[
\rho_1(s,t) = \big| \arcsin(2s-1) - \arcsin(2t-1) \big| \quad \T{for all } s,t \in [0,1] .
\]
We let $h_q \colon C(X_q) \to \cc$ denote the restriction of the Haar state to the $C^*$-subalgebra $C^*(A_q,1) \su C(SU_q(2))$. For $q = 1$ we remark that $h_1$ agrees with the usual Riemann integral on $C( [0,1])$. \\

Let $N \in \nn_0$. As mentioned earlier we are interested in estimating the Monge-Kantorovi\v{c} distance between the two states $h_N$ and $\epsilon \colon C(S_q^2) \to \cc$ in a suitably uniform manner with respect to the deformation parameter $q \in [\de,1]$. We now present an estimate on this quantity which involves the metric $\rho_q \colon X_q \ti X_q \to [0,\infty)$. 

\begin{lemma}\label{l:metricest}
Let $q \in [\de,1]$ and $N \in \nn_0$. We have the estimate
\[
d_q( \epsilon, h_N) \leq \frac{\inn{N+1}_q}{q^{2N}} \cd h_q\big( a_q^N (a_q^*)^N \cd \rho_q(-,0)\big) .
\]
\end{lemma}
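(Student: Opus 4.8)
The plan is to transfer the whole estimate to the commutative subalgebra $C^*(A_q,1)\cong C(X_q)$ and then recognise the desired bound as the classical estimate for the Wasserstein distance to a point mass. By Lemma \ref{l:conmetIII}, together with the symmetry of the Monge-Kantorovi\v{c} distance, we have $d_q(\epsilon,h_N)=d_q(\epsilon\ci i, h_N\ci i)$, so it suffices to control the distance between the restricted states on $C^*(A_q,1)$. The first step is to identify $\epsilon\ci i$: the counit is multiplicative and $\epsilon(A_q)=\epsilon(b_q b_q^*)=0$ (since $\epsilon(b_q)=0$), so under the identification $C^*(A_q,1)\cong C(X_q)$ in which $A_q$ becomes the coordinate function, $\epsilon\ci i$ is simply evaluation at the point $0\in X_q$.

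The conceptual heart of the argument is the second step. By the results of \cite{GKK:QI} recalled above, the restriction of $L_{D_q}$ to $C^*(A_q,1)$ is exactly the Lipschitz seminorm for the metric $\rho_q$. Hence for any self-adjoint $f\in \C O(A_q,1)$ with $L_{D_q}(f)\leq 1$ one has the pointwise bound $|f(t)-f(0)|\leq \rho_q(t,0)$, and using positivity of the state $h_N$ this gives
\[
\big| h_N(f)-\epsilon(f) \big| = \big| h_N\big(f-f(0)\cd 1\big) \big| \leq h_N\big(|f-f(0)\cd 1|\big) \leq h_N\big(\rho_q(-,0)\big).
\]
Since $L_{D_q}(f^*)=L_{D_q}(f)$ it suffices to take the supremum over self-adjoint $f$, and we obtain $d_q(\epsilon,h_N)\leq h_N(\rho_q(-,0))$; that is, the distance to the point mass at $0$ is controlled by the $h_N$-average of the distance-to-$0$ function.

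The third step, where the only genuine computation enters, is to rewrite $h_N$ on $C^*(A_q,1)$ by means of the twisted trace property \eqref{eq:modular}. The modular automorphism $\nu$ fixes $A_q$ and satisfies $\nu(a_q^N)=q^{-2N}a_q^N$ by \eqref{eq:modular-function}, so for a polynomial $f$ in $A_q$,
\[
h_N(f)=\inn{N+1}_q\cd h\big((a_q^*)^N f a_q^N\big)=\inn{N+1}_q\cd h\big(\nu(a_q^N)(a_q^*)^N f\big)=\frac{\inn{N+1}_q}{q^{2N}}\cd h\big(a_q^N(a_q^*)^N f\big).
\]
The structural observation that makes this useful is that $a_q^N(a_q^*)^N$ again lies in $C^*(A_q,1)$, which follows by induction from $a_q a_q^*=1-A_q$ together with the relation $a_q A_q=q^{-2}A_q a_q$. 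Consequently $a_q^N(a_q^*)^N f\in C^*(A_q,1)$, on which $h$ restricts to $h_q$, so $h_N(f)=\tfrac{\inn{N+1}_q}{q^{2N}}h_q(a_q^N(a_q^*)^N f)$. Both sides depend continuously on $f\in C(X_q)$, so by density of polynomials the identity persists for $f=\rho_q(-,0)$. Combining this with the second step yields
\[
d_q(\epsilon,h_N)\leq h_N\big(\rho_q(-,0)\big)=\frac{\inn{N+1}_q}{q^{2N}}\cd h_q\big(a_q^N(a_q^*)^N\cd \rho_q(-,0)\big),
\]
which is the claimed estimate.

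I expect no serious obstacle here: once the reduction to $C^*(A_q,1)$ via Lemma \ref{l:conmetIII} and the Lipschitz description of $L_{D_q}$ from \cite{GKK:QI} are in hand, the whole argument is essentially forced. The two points meriting care rather than difficulty are the verification that $a_q^N(a_q^*)^N$ stays inside the commutative algebra $C^*(A_q,1)$, and the density argument needed to extend the twisted-trace identity from polynomials to the merely continuous (but $1$-Lipschitz) function $\rho_q(-,0)$.
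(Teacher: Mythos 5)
Your proof is correct and follows essentially the same route as the paper's: reduction to $C^*(A_q,1)\cong C(X_q)$ via Lemma \ref{l:conmetIII}, identification of $\epsilon\circ i$ with evaluation at $0\in X_q$, the Lipschitz bound $|h_N(\xi)-\epsilon(\xi)|\leq L_{D_q}(\xi)\cdot h_N\big(\rho_q(-,0)\big)$ coming from \cite{GKK:QI}, and the rewriting of $h_N$ through the twisted trace property \eqref{eq:modular}. Your explicit verifications---that $\nu(a_q^N)=q^{-2N}a_q^N$, that $a_q^N(a_q^*)^N$ is a polynomial in $A_q$, and the density argument extending the identity to the continuous function $\rho_q(-,0)$---merely spell out steps the paper leaves implicit.
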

\begin{proof}
By Lemma \ref{l:conmetIII} we have that $d_q(\epsilon,h_N) = d_q(\epsilon \ci i,h_N \ci i)$, where $i \colon C^*(A_q,1) \to C(S_q^2)$ denotes the inclusion. The composition $\epsilon \ci i \colon C^*(A_q,1) \to \cc$ agrees with the pure state on $C(X_q)$ given by  evaluation at $0 \in X_q$. Let now $\xi \in C(X_q)$ be given. Since the restriction $L_{D_q} \colon C(X_q) \to [0,\infty]$ agrees with the Lipschitz constant seminorm coming from the metric $\rho_q \colon X_q \ti X_q \to [0,\infty)$ we obtain that 
\[
\begin{split}
\big| \epsilon(\xi) - h_N(\xi) \big| 
& = \big| h_N\big( \xi(0) - \xi \big) \big| \leq h_N\big( | \xi(0) - \xi| \big)
\leq L_{D_q}(\xi) \cd h_N\big( \rho_q( -,0)\big) \\
& = L_{D_q}(\xi) \cd \frac{\inn{N+1}_q}{q^{2N}} \cd h_q\big( a_q^N (a_q^*)^N \cd \rho_q(-,0) \big)  ,
\end{split}
\]
where the last identity uses the definition of state $h_N \colon C(S_q^2) \to \cc$ together with the twisted tracial property of the Haar state, see \eqref{eq:statedef} and \eqref{eq:modular}. The result of the lemma now follows from the definition of the quantity $d_q(\epsilon \ci i,h_N \ci i) = d_q(\epsilon,h_N)$ as recalled in \eqref{eq:metricres}.
\end{proof}


We now carry out a more detailed analysis of the right hand side of the estimate appearing in Lemma \ref{l:metricest}. First of all we treat the dependency of the (restriction of the) Haar state $h_q \colon C(X_q) \to \cc$ on the deformation parameter $q \in [\de,1]$. Our next lemma can also be deduced from \cite[Page 195]{Bla:DCH}, but for the convenience of the reader we here provide a short self contained argument. Consider the polynomial algebra $\cc[x,y]$ as a unital $*$-subalgebra of $C\big( [\de,1] \ti [0,1] \big)$ and define the linear map $H \colon \cc[x,y] \to C( [\de,1])$ given by
\[
H( x^j y^k )(q) := \frac{q^j}{ \inn{k + 1}_q} .  
\]

\begin{lemma}\label{l:conmetIV}
The linear map $H \colon \cc[x,y] \to C( [\de,1])$ extends to a norm-contraction $H \colon C( [\de,1] \ti [0,1]) \to C([\de,1])$ such that $H( \xi)(q) = h_q( \xi(q, - ) |_{X_q} )$ for all $q \in [\de,1]$.
\end{lemma}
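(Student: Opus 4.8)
The plan is to exhibit $H$ concretely as the assignment $\xi \mapsto \big( q \mapsto h_q(\xi(q,-)|_{X_q}) \big)$ and to verify that this assignment is a well-defined norm contraction into $C([\de,1])$ which agrees with the stated formula on monomials. For $q \in [\de,1]$ and $\xi \in C([\de,1]\ti[0,1])$ write $\Psi_q(\xi) := h_q(\xi(q,-)|_{X_q})$, where $\xi(q,-)$ is the continuous section of $\xi$ at the fixed parameter $q$, restricted to the spectrum $X_q \su [0,1]$ and regarded as an element of $C(X_q) \cong C^*(A_q,1)$. The proof then splits into three tasks: matching $\Psi_q$ with $H$ on the monomials $x^j y^k$, establishing that each $\Psi_q$ is contractive, and extending to all of $C([\de,1]\ti[0,1])$ with values in the continuous functions.

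First I would carry out the key computation, the moment identity $h_q(A_q^k) = 1/\inn{k+1}_q$ for all $k \in \nn_0$. For $q = 1$ this is the elementary integral $\int_0^1 t^k\,dt = 1/(k+1)$. For $q \neq 1$ I would avoid any appeal to the explicit spectral measure on $X_q$ and argue directly from the twisted tracial property \eqref{eq:modular} of the Haar state. Writing $c_k := h(A_q^k)$ and using the relations $a_q a_q^* = 1 - A_q$, $a_q^* a_q = 1 - q^2 A_q$, the $q$-commutation $A_q^k a_q = q^{2k} a_q A_q^k$, and the modular values $\nu(a_q^*) = q^2 a_q^*$ and $\nu(A_q) = A_q$, one expands $h\big((1-A_q)A_q^k\big) = h(a_q a_q^* A_q^k)$ via \eqref{eq:modular} to obtain the recursion $c_{k+1} = c_k\,\tfrac{1 - q^{2k+2}}{1 - q^{2k+4}}$, which telescopes from $c_0 = 1$ to $c_k = \tfrac{1-q^2}{1-q^{2(k+1)}} = 1/\inn{k+1}_q$. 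Since $q^j$ is a scalar, this yields $\Psi_q(x^j y^k) = q^j h_q(A_q^k) = q^j/\inn{k+1}_q = H(x^j y^k)(q)$, so $H$ and $\Psi_\bullet$ coincide on $\cc[x,y]$ by linearity.

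Next I would handle the analytic packaging. Each $\Psi_q$ is contractive: $h_q$ is a state on $C(X_q)$, hence norm-contractive, and restriction to $X_q \su [0,1]$ does not increase the supremum norm, so $|\Psi_q(\xi)| \leq \|\xi(q,-)|_{X_q}\|_\infty \leq \|\xi\|$. Moreover $H$ genuinely maps $\cc[x,y]$ into $C([\de,1])$, because $\inn{k+1}_q = \sum_{m=0}^{k} q^{2m}$ is a polynomial in $q$ bounded below by $1$ on $[\de,1]$, so each $H(x^j y^k)$ is continuous. Since $\cc[x,y]$ is dense in $C([\de,1]\ti[0,1])$ by the Stone--Weierstrass theorem and $H$ is a contraction from this dense subalgebra into the Banach space $C([\de,1])$, the bounded-linear-transformation theorem provides a unique contractive extension $H \colon C([\de,1]\ti[0,1]) \to C([\de,1])$, the values remaining continuous since a uniform limit of continuous functions is continuous. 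Finally, for each fixed $q$ the two functionals $\xi \mapsto H(\xi)(q)$ and $\xi \mapsto \Psi_q(\xi)$ are both contractive and agree on the dense subalgebra $\cc[x,y]$, hence agree on all of $C([\de,1]\ti[0,1])$; this is precisely the asserted identity $H(\xi)(q) = h_q(\xi(q,-)|_{X_q})$.

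The only genuinely substantive step is the moment identity of the second paragraph; once it is in place the remainder is soft functional analysis. I expect the main obstacle to be choosing a derivation of $h_q(A_q^k) = 1/\inn{k+1}_q$ that is simultaneously self-contained and uniform in $q$: the telescoping recursion coming from the modular automorphism is attractive precisely because it sidesteps the explicit description of the spectral measure on $X_q$ and treats all $q \in [\de,1]$ on an equal footing. A secondary point requiring care is the uniformity of the contraction estimate across $q$, which is exactly what upgrades the individual bounds $|\Psi_q(\xi)| \leq \|\xi\|$ to the operator-norm statement $\|H\| \leq 1$ between the sup-normed function spaces.
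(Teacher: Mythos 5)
Your proposal is correct and follows the same overall strategy as the paper's proof: verify the identity $H(x^jy^k)(q)=h_q\big((x^jy^k)(q,-)|_{X_q}\big)$ on monomials, deduce contractivity of $H$ on $\cc[x,y]$ from the fact that each $h_q$ is a state, and extend by Stone--Weierstrass density of $\cc[x,y]$ in $C([\de,1]\ti[0,1])$, identifying the extension's value at each $q$ by the agreement of two contractive functionals on a dense subalgebra. The one place you genuinely deviate is the moment identity $h_q(A_q^k)=1/\inn{k+1}_q$: the paper simply cites \cite[Chapter 4, Equation (51)]{KlSc:QGR}, whereas you derive it from the twisted trace property \eqref{eq:modular}. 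Your recursion checks out: from $aa^*=1-A$, $a^*a=1-q^2A$, $A^ka=q^{2k}aA^k$ (a consequence of $ba=qab$ and $b^*a=qab^*$), together with $\nu(a^*)=q^2a^*$ and $\nu(A)=A$ (read off from \eqref{eq:modular-function} and $u^1=u$), one obtains $c_k-c_{k+1}=h(aa^*A^k)=q^2h(a^*A^ka)=q^{2k+2}h(a^*aA^k)=q^{2k+2}(c_k-q^2c_{k+1})$, hence $c_{k+1}=c_k(1-q^{2k+2})/(1-q^{2k+4})$, which telescopes from $c_0=1$ to $c_k=(1-q^2)/(1-q^{2(k+1)})=1/\inn{k+1}_q$. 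Note that at $q=1$ this recursion degenerates to a tautology, so your separate treatment of that case via the identification of $h_1$ with the Riemann integral (recorded in the paper just before the lemma) is not merely convenient but necessary. What your route buys is a self-contained argument that treats all $q\in[\de,1)$ uniformly without invoking the explicit spectral measure on $X_q$; what the paper's citation buys is brevity. The remaining analytic steps --- the fiberwise bound $|h_q(\xi(q,-)|_{X_q})|\leq\|\xi\|_\infty$ and the contractive extension to the completion --- coincide with the paper's argument.
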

\begin{proof}
Let first $q \in [\de,1]$ be fixed. By definition of $H \colon \cc[x,y] \to C([\de,1])$ we have that
\[
H(x^j y^k)(q) = \frac{q^j}{ \inn{k+1}_q} = q^j \cd h_q( A_q^k) = h_q\big( (x^j \cd y^k)(q ,-)|_{X_q} \big)
\]
for all $j,k \in \nn_0$. Remark that the formula for $h_q(A_q^k)$ can be found in \cite[Chapter 4, Equation (51)]{KlSc:QGR}. Then, using that $h_q \colon C(X_q) \to \cc$ is a state for every $q \in [\de,1]$, we obtain that
\[
\begin{split}
\| H(\xi) \|_{\infty} & = \sup_{q \in [\de,1]} \big| H(\xi)(q) \big| 
= \sup_{q \in [\de,1]} \big| h_q( \xi(q ,\cd ) |_{X_q}) \big| 
 \leq \sup_{ (q,s) \in [\de,1] \ti X_q} \big| \xi(q,s) \big| \leq \| \xi \|_\infty 
\end{split}
\]
for all $\xi \in \cc[x,y]$. The result of the lemma now follows since $\cc[x,y] \su C( [\de,1] \ti [0,1])$ is dense in supremum norm. 
\end{proof}

Continuing our treatment of the right hand side of the estimate in Lemma \ref{l:metricest} we now analyze how the functions $\rho_q(-,0) \colon X_q \to [0,\infty)$ depend on the deformation parameter $q \in [\de,1]$. We record that
\begin{equation}\label{eq:distzero}
\begin{split}
\rho_q(q^{2m},0) 
& = \sum_{k = m}^\infty \frac{(1 - q^2) q^k}{\sqrt{1 - q^{2(k+1)}}} 
 = \sum_{k = 0}^\infty \frac{(1 - q^2) q^{k+m}}{\sqrt{1 - q^{2(k + m +1)}}} 
\end{split}
\end{equation}
whenever $q \neq 1$ and $m \in \nn_0$, see \eqref{eq:metric-formula}. In order to deal with these expressions we let $(q,s) \in [\de,1) \ti [0,1]$ and define the continuous decreasing function $\ze_{q,s} \colon (-1,\infty) \to [0,\infty)$ by the formula
\[
\ze_{q,s}(x) := \frac{ (1 - q^2) q^x \cd \sqrt{s} }{\sqrt{1 - s q^{2(x + 1)}}} 
\]
Each of these functions can be estimated from above as follows:
\[
\ze_{q,s}(x) \leq \frac{q^x}{\sqrt{1 - q^{2(x + 1)}} } \leq \frac{q^x}{ \sqrt{1 - q^2}} \quad \T{for all } x \geq 0 .
\]
We may thus introduce the function $f \colon [\de,1] \ti [0,1] \to [0,\infty)$ by putting
\begin{equation}\label{eq:conmetI}
f(q,s) := \fork{ccc}{
\sum_{k = 0}^\infty \ze_{q,s}(k) & \T{for} & q \neq 1 \\
2 \arcsin( \sqrt{s}) & \T{for} & q = 1
} .
\end{equation}
Comparing with the formula for the metric in \eqref{eq:distzero} we immediately see that
\begin{equation}\label{eq:metricfunc}
f(q,q^{2m}) = \sum_{k = 0}^\infty \ze_{q,q^{2m}}(k) = \sum_{k = 0}^\infty \frac{ (1 - q^2) q^{k+m} }{\sqrt{1 - q^{2(k + m + 1)}}} 
= \rho_q(q^{2m},0)
\end{equation}
whenever $q \neq 1$ and $m \in \nn_0$. Similarly, for $q = 1$ we have that
\begin{equation}\label{eq:metricfuncII}
f(1,s) = 2 \arcsin(\sqrt{s}) = \arcsin(2s-1) + \frac{\pi}{2} = \rho_1(s,0) \quad \T{for all } s \in [0,1].
\end{equation}

\begin{lemma}\label{l:conmetI}
The function $f \colon [\de,1] \ti [0,1] \to [0,\infty)$ is continuous.
\end{lemma}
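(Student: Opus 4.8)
The plan is to separate continuity on the open region $[\de,1)\ti[0,1]$, which is routine, from continuity across the edge $q=1$, which is the whole point. First I would dispose of the open region. Each summand $(q,s)\mapsto \ze_{q,s}(k)$ is jointly continuous, and on any compact set $[\de,1-\eta]\ti[0,1]$ the estimate $\ze_{q,s}(x)\leq q^x/\sqrt{1-q^2}$ recorded just above \eqref{eq:conmetI} yields $\ze_{q,s}(k)\leq (1-\eta)^k/\sqrt{1-(1-\eta)^2}=:M_k$, a summable majorant independent of $(q,s)$. By the Weierstrass $M$-test the series converges uniformly on compacta, so $f$ is continuous on $[\de,1)\ti[0,1]$.

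The substance of the lemma is continuity across $q=1$, and the main obstacle is that the majorant above degenerates as $q\to 1$. I would circumvent this by comparing the sum to an integral that can be evaluated in closed form. Since $\ze_{q,s}$ is decreasing, the integral test gives
\[
0 \leq \sum_{k=0}^\infty \ze_{q,s}(k) - \int_0^\infty \ze_{q,s}(x)\,dx \leq \ze_{q,s}(0) = \frac{(1-q^2)\sqrt{s}}{\sqrt{1-sq^2}} \leq \sqrt{1-q^2},
\]
where the last inequality uses $s\in[0,1]$. This bound is uniform in $s$ and tends to $0$ as $q\to 1$.

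Next I would compute the integral via the substitution $t=q^x$, obtaining, for $q\neq 1$,
\[
\int_0^\infty \ze_{q,s}(x)\,dx = \frac{-(1-q^2)}{q\ln q}\cd \arcsin(q\sqrt{s}).
\]
Writing $c(q):=-(1-q^2)/(q\ln q)$, one checks $c(q)\to 2$ as $q\to 1$, so it remains to show $c(q)\arcsin(q\sqrt{s})\to 2\arcsin(\sqrt{s})$ uniformly in $s\in[0,1]$. Splitting
\[
c(q)\arcsin(q\sqrt{s}) - 2\arcsin(\sqrt{s}) = (c(q)-2)\arcsin(q\sqrt{s}) + 2\big(\arcsin(q\sqrt{s})-\arcsin(\sqrt{s})\big),
\]
the first term is bounded by $|c(q)-2|\cd \tfrac{\pi}{2}$, while the second is controlled by the \emph{uniform} continuity of $\arcsin$ on $[-1,1]$ together with $|q\sqrt{s}-\sqrt{s}|\leq 1-q$; both contributions vanish uniformly in $s$.

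Combining these steps yields $\sup_{s\in[0,1]}|f(q,s)-f(1,s)|\to 0$ as $q\to 1^-$, where $f(1,s)=2\arcsin(\sqrt{s})$ by \eqref{eq:metricfuncII}. Since $f(1,\cdot)$ is continuous, this uniform convergence upgrades to joint continuity at each $(1,s_0)$: for $(q_n,s_n)\to(1,s_0)$ we estimate $|f(q_n,s_n)-f(1,s_0)|\leq \sup_s|f(q_n,s)-f(1,s)| + |f(1,s_n)-f(1,s_0)|$, and both terms tend to $0$; together with the first paragraph this gives continuity of $f$ on all of $[\de,1]\ti[0,1]$. I expect the delicate point to be the uniform-in-$s$ control of $\arcsin(q\sqrt{s})-\arcsin(\sqrt{s})$ near $s=1$, where the derivative of $\arcsin$ is unbounded; this is exactly why I invoke uniform, rather than merely pointwise, continuity of $\arcsin$ on the closed interval.
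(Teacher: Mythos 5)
Your proof is correct and follows essentially the same route as the paper's: both compare the series with $\int \ze_{q,s}(x)\,dx$, evaluate the integral in closed form via the antiderivative $x \mapsto \frac{1-q^2}{\ln(q)\cd q}\arcsin(\sqrt{s}\cd q^{x+1})$, and pass to the limit at $q=1$ using $\lim_{q\to 1}\frac{1-q^2}{\ln(q)\cd q}=-2$. The only difference is cosmetic: the paper sandwiches the sum between $\int_0^\infty \ze_{q,s}$ and $\int_{-1}^\infty \ze_{q,s}$ and squeezes, whereas you use the one-sided integral test with error term $\ze_{q,s}(0)\leq \sqrt{1-q^2}$ and spell out the uniform-in-$s$ convergence that the paper leaves implicit.
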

\begin{proof}
We focus on proving continuity at a point of the form $(1,s_0)$ for a fixed $s_0 \in [0,1]$,  since the continuity of the restriction $f\vert_{[\de,1) \ti [0,1]}$ follows as the sequence of partial sums $\{ \sum_{k=0}^m \zeta_{-,-}(k)\}_{m = 0}^\infty$ converges in supremum norm to $f$ {on compact subsets of $[\de,1) \ti [0,1]$.} We remark that for each fixed $(q,s) \in [\de,1) \ti [0,1]$ it holds that the function $\ga_{q,s} \colon (-1,\infty) \to \rr$ given by
\[
\ga_{q,s} \colon x \mapsto \frac{1 - q^2}{\ln(q) \cd q} \cd \arcsin( \sqrt{s} \cd q^{x + 1} )
\]
is an antiderivative to $\ze_{q,s} \colon (-1,\infty) \to \rr$. Moreover, since $\ze_{q,s} \colon (-1,\infty) \to \rr$ is positive and decreasing we obtain the estimates
\[
\int_0^\infty \ze_{q,s}(x) dx \leq \sum_{k = 0}^\infty \ze_{q,s}(k) \leq \int_{-1}^\infty \ze_{q,s}(x) dx .
\]
In order to compute the above integrals we record the following formulae:
\[
\begin{split}
 & \lim_{x \to \infty} \ga_{q,s}(x) = 0  \quad \quad  \quad \quad \quad \quad \ga_{q,s}(0) = \frac{1 - q^2}{\ln(q) \cd q} \arcsin(\sqrt{s} \cd q) \\
 & \lim_{x \to -1} \ga_{q,s}(x) = \frac{1 - q^2}{\ln(q) \cd q} \arcsin(\sqrt{s}) .
 \end{split}
\]
We thereby obtain the estimates
\[
-\frac{1 - q^2}{\ln(q) \cd q} \arcsin(\sqrt{s} \cd q) \leq f(q,s) \leq -\frac{1 - q^2}{\ln(q) \cd q} \arcsin(\sqrt{s})
\]
for all $(q,s) \in [\de,1) \ti [0,1]$. The continuity of the function $f \colon [\de,1] \ti [0,1] \to [0,\infty)$ at the fixed point $(1,s_0) \in [\de,1] \ti [0,1]$ now follows by noting that
\[
\lim_{q \to 1} \frac{1 - q^2}{\ln(q) \cd q} = - 2. \qedhere
\]
\end{proof}

We are now ready for the final step regarding the continuity properties of the right hand side of the estimate in Lemma \ref{l:metricest}. For each $q \in [\de,1]$ and $N \in \nn_0$, we compute that
\[
a_q^N (a_q^*)^N 
= (1 - q^{-2(N-1)} A_q) \cd (1 - q^{-2(N-2)} A_q) \clc (1 - A_q) .
\]
We then define the continuous function $g_N \colon [\de,1] \ti [0,1] \to [0,\infty)$
\begin{equation}\label{eq:conmetII}
g_N(q,s) := \frac{\inn{N+1} }{q^{2N}} \cd (1 - q^{-2(N-1)} \cd s) \cd (1 - q^{-2(N-2)} s) \clc (1 - s)
\end{equation}
and note that $g_N(q,-)\vert_{X_q} = \frac{\inn{N + 1}}{q^{2N}} \cd a_q^N (a_q^*)^N$.\\
The next result summarises what we have obtained so far and is thus a consequence of Lemma \ref{l:metricest}, Lemma \ref{l:conmetIV} and Lemma \ref{l:conmetI} together with \eqref{eq:metricfunc}, \eqref{eq:metricfuncII} and \eqref{eq:conmetII}:


\begin{lemma}\label{l:conmetV}
For each $N \in \nn_0$ and each $q \in [\de,1]$ we have the estimate
\[
d_q( \epsilon,h_N) \leq H( f \cd g_N)(q) .
\]
\end{lemma}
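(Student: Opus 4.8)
The plan is to assemble the three preceding lemmas into a single chain of (in)equalities. The starting point is Lemma \ref{l:metricest}, which already gives the bound
\[
d_q(\epsilon, h_N) \leq \frac{\inn{N+1}_q}{q^{2N}} \cd h_q\big(a_q^N (a_q^*)^N \cd \rho_q(-,0)\big)
\]
for every $q \in [\de,1]$ and $N \in \nn_0$, so the only thing left to do is to recognise the right hand side as $H(f \cd g_N)(q)$.

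First I would note that $f \cd g_N \in C\big([\de,1]\ti[0,1]\big)$, so that $H(f\cd g_N)$ actually makes sense: the factor $g_N$ is, by \eqref{eq:conmetII}, a polynomial in $(q,s)$ divided by $q^{2N}$, hence continuous on $[\de,1]\ti[0,1]$ since $\de>0$, while the continuity of $f$ is exactly Lemma \ref{l:conmetI}. By Lemma \ref{l:conmetIV} we then have the pointwise formula $H(f\cd g_N)(q) = h_q\big((f\cd g_N)(q,-)\vert_{X_q}\big)$.

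Next I would identify the restriction $(f\cd g_N)(q,-)\vert_{X_q}$ inside $C(X_q) = C^*(A_q,1)$. By \eqref{eq:conmetII} we have $g_N(q,-)\vert_{X_q} = \frac{\inn{N+1}}{q^{2N}}\cd a_q^N (a_q^*)^N$. For $q\neq 1$, the identity \eqref{eq:metricfunc} gives $f(q,q^{2m}) = \rho_q(q^{2m},0)$ for all $m\in\nn_0$, and since $f(q,0)=0=\rho_q(0,0)$ as well, the function $f(q,-)\vert_{X_q}$ coincides with $\rho_q(-,0)$ on $X_q = \{q^{2m}\mid m\in\nn_0\}\cup\{0\}$; for $q=1$ this same equality $f(1,-)\vert_{X_1} = \rho_1(-,0)$ is \eqref{eq:metricfuncII}. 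Multiplying these two identities in $C(X_q)$ yields
\[
(f\cd g_N)(q,-)\vert_{X_q} = \frac{\inn{N+1}}{q^{2N}}\cd a_q^N (a_q^*)^N \cd \rho_q(-,0).
\]

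Finally, applying the state $h_q$ to the last display and invoking the pointwise formula for $H$ from Lemma \ref{l:conmetIV}, one gets
\[
H(f\cd g_N)(q) = \frac{\inn{N+1}_q}{q^{2N}}\cd h_q\big(a_q^N (a_q^*)^N \cd \rho_q(-,0)\big),
\]
which is precisely the upper bound produced by Lemma \ref{l:metricest}; combining the two proves $d_q(\epsilon,h_N)\leq H(f\cd g_N)(q)$. The only step requiring any real care is the middle one — checking that the function $(f\cd g_N)(q,-)\vert_{X_q}$ genuinely matches the function $a_q^N(a_q^*)^N\rho_q(-,0)$ appearing in Lemma \ref{l:metricest}, i.e.\ correctly reconciling the two descriptions of the relevant functions on the spectrum $X_q$ and being careful to use \eqref{eq:metricfuncII} rather than \eqref{eq:metricfunc} at $q=1$. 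Everything else is bookkeeping, and the continuity needed to give $H(f\cd g_N)$ meaning has already been furnished by Lemma \ref{l:conmetI}.
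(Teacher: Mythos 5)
Your proof is correct and follows exactly the route the paper intends: the paper gives no separate argument for this lemma but states it as a direct consequence of Lemma \ref{l:metricest}, Lemma \ref{l:conmetIV} and Lemma \ref{l:conmetI} together with \eqref{eq:metricfunc}, \eqref{eq:metricfuncII} and \eqref{eq:conmetII}, which is precisely the chain you assemble. Your extra check that $f(q,0)=0=\rho_q(0,0)$ at the point $0\in X_q$ (not covered by \eqref{eq:metricfunc}) is a detail the paper leaves implicit, and it is handled correctly.
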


We finish this section by proving two lemmas culminating in a uniform estimate on the distance between the Berezin transform and the identity map.

\begin{lemma}\label{l:conmetVI}
For each $q \in [\de,1]$, it holds that $\displaystyle \lim_{N \to \infty} H(f \cd g_N)(q) = 0$. 
\end{lemma}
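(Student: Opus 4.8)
The plan is to recognise the quantity $H(f\cd g_N)(q)$ as the value of the state $h_N$ on a single fixed continuous function, and then to read off the limit from the weak$^*$-convergence $h_N\to\epsilon$ already established in Proposition \ref{prop:convergence-to-counit}. No new estimate in $q$ is needed for this pointwise statement; the uniformity in $q\in[\de,1]$ is the business of the subsequent lemma.

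First I would unwind the definitions. By Lemma \ref{l:conmetIV} we have $H(f\cd g_N)(q)=h_q\big((f\cd g_N)(q,-)\vert_{X_q}\big)$. Inserting the identity $g_N(q,-)\vert_{X_q}=\tfrac{\inn{N+1}}{q^{2N}}a_q^N(a_q^*)^N$ recorded just after \eqref{eq:conmetII}, together with the identifications $f(q,-)\vert_{X_q}=\rho_q(-,0)$ supplied by \eqref{eq:metricfunc} and \eqref{eq:metricfuncII} (valid including at the accumulation point $0$, where both sides vanish), this becomes
\[
H(f\cd g_N)(q)=\frac{\inn{N+1}}{q^{2N}}\,h_q\big(a_q^N(a_q^*)^N\cd \rho_q(-,0)\big).
\]
Exactly as in the final step of the proof of Lemma \ref{l:metricest}, the twisted-trace property \eqref{eq:modular} applied with $\nu(a^N)=q^{-2N}a^N$ (which follows from \eqref{eq:modular-function} since $a=u^1_{11}$) identifies the right-hand side with $h_N(\rho_q(-,0))$, where $\rho_q(-,0)$ is regarded as an element of $C^*(A_q,1)\su C(S_q^2)$ through functional calculus in $A_q$.

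Next I would verify that $\rho_q(-,0)$ is a genuine element of $C^*(A_q,1)=C(X_q)$ and that $\epsilon$ annihilates it. Continuity on $X_q$ is automatic at every isolated point $q^{2m}$, and at the accumulation point $0$ it follows from $\rho_q(q^{2m},0)=\sum_{k= m}^\infty \tfrac{(1-q^2)q^k}{\sqrt{1-q^{2(k+1)}}}\to 0$ as $m\to\infty$ (for $q=1$ this is simply the continuity of $s\mapsto 2\arcsin(\sqrt{s})$ on $[0,1]$). Since $\epsilon\vert_{C^*(A_q,1)}$ is evaluation at the point $0\in X_q$, we get $\epsilon(\rho_q(-,0))=\rho_q(0,0)=0$. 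Finally, Proposition \ref{prop:convergence-to-counit} gives $h_N\to\epsilon$ weak$^*$, so applying this convergence to the fixed continuous function $\rho_q(-,0)$ yields
\[
\lim_{N\to\infty}H(f\cd g_N)(q)=\lim_{N\to\infty}h_N(\rho_q(-,0))=\epsilon(\rho_q(-,0))=0,
\]
as desired. The only point requiring genuine care is the bookkeeping that identifies $H(f\cd g_N)(q)$ with $h_N(\rho_q(-,0))$; once that is in place the conclusion is an immediate consequence of the weak$^*$-convergence already proved.
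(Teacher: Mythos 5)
Your proposal is correct and follows essentially the same route as the paper: both identify $H(f\cd g_N)(q)$ with $h_N\big(f(q,-)\vert_{X_q}\big)=h_N\big(\rho_q(-,0)\big)$ via Lemma \ref{l:conmetIV}, the identity $g_N(q,-)\vert_{X_q}=\tfrac{\inn{N+1}}{q^{2N}}a_q^N(a_q^*)^N$ and the twisted-trace property \eqref{eq:modular}, observe that $\epsilon$ restricted to $C^*(A_q,1)$ is evaluation at $0\in X_q$ so that $\epsilon\big(f(q,-)\vert_{X_q}\big)=0$, and conclude by the weak$^*$-convergence $h_N\to\epsilon$ from Proposition \ref{prop:convergence-to-counit}. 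Your extra bookkeeping (the continuity of $\rho_q(-,0)$ at the accumulation point of $X_q$ and the explicit computation $\nu(a^N)=q^{-2N}a^N$) is accurate and merely makes explicit steps the paper leaves implicit.
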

\begin{proof}
Let $q \in [\de,1]$ be given. We first remark that it follows from the definition in \eqref{eq:conmetI} (see also \eqref{eq:metricfunc}) that $f(q,0) = 0$. Since the restriction of the counit to $C^*(A_q,1) \cong C(X_q)$ is given by evaluation at $0\in X_q$ this translates into the identity $\epsilon(f(q,\cdot)\vert_{X_q})=0$. Moreover, we notice that
\[
H(f \cd g_N)(q)= h_q\big( g_N(q,-)\vert_{X_q} \cd f(q,-)\vert_{X_q}\big) = h_N\big( f(q,-)\vert_{X_q} \big) .
\]
The result of the lemma now follows since the sequence of states $\{h_N\}_{N = 0}^\infty$ converges to the restriction of the counit $\epsilon \colon C(S_q^2) \to \cc$ in the weak$^*$-topology by Proposition \ref{prop:convergence-to-counit}.
\end{proof}

\begin{lemma}\label{l:conmetVII}
For each $\ep > 0$, each $q_0 \in [\de,1]$ and each $N_0 \in \nn_0$ there exists an $N \geq N_0$ and an open interval $I \su \rr$ with $q_0 \in I$ such that
\[
\| x - \be_N(x) \| \leq \ep \cd L_{D_q}(x)
\]
for all $q \in I \cap [\de,1]$ and all $x \in \C O(S_q^2)$.
\end{lemma}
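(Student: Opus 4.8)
The plan is to obtain Lemma~\ref{l:conmetVII} by packaging together the pointwise approximation result with the continuous dependence on the deformation parameter that was established in the preceding lemmas. The starting point is the chain of estimates supplied by Proposition~\ref{p:approxIII} and Lemma~\ref{l:conmetV}: for every $q \in [\de,1]$, every $N \in \nn_0$ and every $x \in \C O(S_q^2)$ one has
\[
\| x - \be_N(x) \| \leq d_q(h_N,\epsilon) \cd L_{D_q}(x) \leq H(f \cd g_N)(q) \cd L_{D_q}(x) .
\]
Hence it is enough to produce, for the given $\ep > 0$, $q_0 \in [\de,1]$ and $N_0 \in \nn_0$, an index $N \geq N_0$ together with an open interval $I \su \rr$ containing $q_0$ such that $H(f \cd g_N)(q) \leq \ep$ for all $q \in I \cap [\de,1]$.

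First I would record that $H(f \cd g_N)$ is an honest continuous function on $[\de,1]$: the function $f$ is continuous by Lemma~\ref{l:conmetI}, the function $g_N$ is continuous by its explicit formula~\eqref{eq:conmetII}, so the product $f \cd g_N$ belongs to $C\big( [\de,1] \ti [0,1] \big)$, and therefore $H(f \cd g_N) \in C([\de,1])$ by Lemma~\ref{l:conmetIV}. Next I would invoke Lemma~\ref{l:conmetVI}, which gives $\lim_{N \to \infty} H(f \cd g_N)(q_0) = 0$, to choose $N \geq N_0$ with $H(f \cd g_N)(q_0) < \ep$ (a strict inequality is what I want here, so that a neighbourhood can be found). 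Finally, continuity of the now fixed function $H(f \cd g_N)$ at the point $q_0$ yields an open interval $I \su \rr$ with $q_0 \in I$ on which $H(f \cd g_N)(q) < \ep$; combining this with the displayed estimate above finishes the argument.

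I do not expect a genuine obstacle, since all the analytic content has already been isolated in Lemmas~\ref{l:metricest}--\ref{l:conmetVI}; the lemma is essentially a bookkeeping statement that converts the pointwise convergence $H(f \cd g_N)(q) \to 0$ (for fixed $q$) into a locally uniform statement. The only points requiring a little care are the order of the choices — the index $N$ must be picked first, using the pointwise limit at $q_0$, and only afterwards is the interval $I$ extracted from the continuity of the single function $H(f \cd g_N)$ — and the use of a strict inequality at $q_0$ so that the continuity neighbourhood can be shrunk to keep the bound $\leq \ep$ throughout $I \cap [\de,1]$. This lemma is then the local ingredient for the subsequent compactness argument over $[\de,1]$.
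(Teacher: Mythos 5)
Your proposal is correct and follows essentially the same route as the paper: pick $N \geq N_0$ via Lemma \ref{l:conmetVI} so that $H(f\cd g_N)(q_0)$ is small, use Lemmas \ref{l:conmetI} and \ref{l:conmetIV} to get continuity of $H(f\cd g_N)$ and hence an interval $I$ on which it stays below $\ep$, and conclude via Proposition \ref{p:approxIII} and Lemma \ref{l:conmetV}. The only cosmetic difference is that the paper splits $\ep$ as $\ep/2 + \ep/2$ whereas you use a strict inequality at $q_0$ and shrink the neighbourhood; both work.
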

\begin{proof}
Let $\ep > 0$, $q_0 \in [\de,1]$ and $N_0 \in \nn_0$ be given. By Lemma \ref{l:conmetVI} we may choose an $N \geq N_0$ such that $H(f \cd g_N)(q_0) < \ep/2$. Then, using Lemma \ref{l:conmetI} and Lemma \ref{l:conmetIV}, we see that $H(f \cd g_N) \in C([\de,1])$. We may therefore choose our open interval $I \su \rr$ such that $\big| H(f \cd g_N)(q) - H(f \cd g_N)(q_0)\big| < \ep/2$ for all $q \in I \cap [\de,1]$. Combining this result with Lemma \ref{l:conmetV} and Proposition \ref{p:approxIII} we then obtain that
\[
\| x - \be_N(x) \| \leq d_q(h_N,\epsilon) \cd L_{D_q}(x) 
\leq \big| H(f \cd g_N)(q) \big| \cd L_{D_q}(x)  \leq \ep \cd L_{D_q}(x)
\]
for all $q \in I \cap [\de,1]$ and all $x \in \C O(S_q^2)$. This ends the proof of the lemma.
\end{proof}


\subsection{Quantum fuzzy spheres converge to the fuzzy sphere}
In this section we prove that the quantum fuzzy spheres converge to the classical fuzzy sphere as the deformation parameter $q$ tends to $1$. As remarked in Section \ref{ss:quafuz}, rather than thinking of the fuzzy sphere as a matrix algebra we will consider its image $F^N_1:=\sigma_N(\mathbb{M}_{N+1}(\cc))$ under the covariant Berezin symbol $\sigma_N \colon \mathbb{M}_{N+1}(\cc)\to C(S^2)$, see Lemma \ref{lem:fuzzy-equal-image} and the discussion after this lemma. We recall that each of the quantum fuzzy spheres $F^N_q \su C(S_q^2)$ is equipped with the Lip-norm arising from restricting the Lip-norm $L_{D_q} \colon C(S_q^2) \to [0,\infty]$, and in this way each $F^N_q$ becomes a compact quantum metric space. 


\begin{prop}\label{prop:quantum-fuzzy-to-classical-fuzzy}
Let $N \in \nn_0$. The quantum fuzzy spheres $\left(F^N_q\right)_{q\in (0,1]}$ vary continuously in the parameter $q$ with respect to the quantum Gromov Hausdorff distance.
\end{prop}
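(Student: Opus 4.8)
The plan is to fix a parameter $q_0 \in (0,1]$ and show that $\dist_{\T{Q}}\big( (F_q^N, L_{D_q}); (F_{q_0}^N, L_{D_{q_0}}) \big) \to 0$ as $q \to q_0$. The backbone of the argument is the fact that the family $\big( C(SU_q(2)) \big)_{q \in (0,1]}$ assembles into a continuous field of $C^*$-algebras in which the generators $a$ and $b$ are continuous sections, see \cite{Bla:DCH}. Consequently $A = bb^*$, $B = ab^*$ and every fixed polynomial expression in $A, B, B^*$ is a continuous section of the subfield $\big( C(S_q^2) \big)_{q \in (0,1]}$, so that $q \mapsto \| s(q) \|$ is continuous for each such section $s$. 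Since $F_q^N$ is finite dimensional, all state spaces are weak$^*$-compact and all Lip-unit balls are totally bounded modulo scalars, which will let us upgrade pointwise continuity to uniformity.

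Since $\{ A^i B^j, A^i (B^*)^j \mid i + j \leq N \}$ is a vector space basis of $F_q^N$ for every $q$, I would introduce the unital linear comparison map $\Phi_q \colon F_{q_0}^N \to F_q^N \su C(S_q^2)$ sending each basis monomial built from the parameter $q_0$ to the corresponding monomial built from $q$. The two quantities to control are the distortion of the $C^*$-norm and of the Lip-norm on the unit balls. For the former, writing $y = \sum c_{ij} A_{q_0}^i B_{q_0}^j$ and $s(q) = \sum c_{ij} A_q^i B_q^j$, continuity of the field gives $\| \Phi_q(y) \| = \| s(q) \| \to \| s(q_0) \| = \| y \|$, uniformly on the unit ball modulo scalars by finite-dimensionality. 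One subtlety is that $\Phi_q$ is only approximately $\ast$-preserving: reordering $(A^iB^j)^\ast$ back into the chosen basis produces $q$-dependent scalars, so $\Phi_q(y^\ast) - \Phi_q(y)^\ast$ is a combination of basis elements whose coefficients tend to $0$ as $q \to q_0$; this $\ast$-defect must be tracked, but it is harmless in the limit.

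For the Lip-norm I would exploit Proposition \ref{p:derV}, which identifies $L_{D_q}(x) = \| u \pa(x) u^* \| = \| \de(x) \|$ with a $C^*$-norm of $\de(x) \in \mathbb{M}_2\big( C(S_q^2) \big)$, where $\de$ is assembled from the twisted derivations $\de_1 = q^{1/2}\de_e$, $\de_2 = q^{-1/2}\de_f$ and $\de_3$. Applying $\de$ to a fixed monomial via the twisted Leibniz rule and the generator formulae \eqref{eq:delexp} produces a polynomial in the generators whose coefficients depend continuously on $q$; hence $\de(\Phi_q(y))$ is again a continuous section of $\mathbb{M}_2\big( C(S_q^2) \big)$, and so $L_{D_q}(\Phi_q(y)) = \| \de(\Phi_q(y)) \| \to L_{D_{q_0}}(y)$, uniformly on the Lip-unit ball modulo scalars. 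I would then feed these estimates into an admissible seminorm on the order unit spaces of the form $L(a,b) := \max\{ L_{D_{q_0}}(a), L_{D_q}(b), \gamma^{-1}\| \Phi_q(a) - b \| \}$; since $\Phi_q$ is only an approximate Lip-isometry, the quotient seminorms recover $L_{D_{q_0}}$ and $L_{D_q}$ only up to a factor $(1+\ep)$ with $\ep \to 0$, and this multiplicative defect is absorbed by the standard comparison of two close Lip-norms on a single operator system. Combining these bounds with the triangle inequality for $\dist_{\T{Q}}$ yields a bound on the Hausdorff distance between the embedded state spaces that vanishes as $q \to q_0$.

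The main obstacle I anticipate is precisely the continuity of the Lip-norm, i.e.\ the claim $L_{D_q}(\Phi_q(y)) \to L_{D_{q_0}}(y)$. Two points require care. First, $L_{D_q}$ is an operator norm in a $C^*$-algebra that itself varies with $q$, so the reformulation $L_{D_q} = \| \de(\cdot) \|$ from Proposition \ref{p:derV} is essential in order to phrase everything inside the continuous field. Second, the twisted derivation $\de_3 = (\de_k - \de_k^{-1})/(q - q^{-1})$ degenerates as $q \to 1$ and must be matched with its limit $\tfrac12\de_h$ on $\C O(S^2)$, exactly as in the limit computations underlying Lemma \ref{l:conmetI}. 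Establishing that the structure constants of $\de$ extend continuously across $q = 1$ is therefore the technical heart of the argument, with the $\ast$-defect bookkeeping a comparatively routine addendum.
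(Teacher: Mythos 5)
Your proposal is correct in substance and rests on the same backbone as the paper's proof --- Blanchard's continuous field \cite{Bla:DCH} over $[\de,1]$ together with finite-dimensionality --- but it packages the conclusion quite differently. The paper works with a single fixed $(N+1)^2$-dimensional real vector space $V_{\sa}$ of (real parts of) polynomial sections $A_\bullet^i B_\bullet^j$, $A_\bullet^i(B_\bullet^*)^j$, observes that $\|y\|_q := \|\T{ev}_q(y)\|$ and $L_q(y) := L_{D_q}(\T{ev}_q(y))$ define a continuous field of order unit norms and a continuous family of Lip-norms on $(V_{\sa},1)$, and then simply invokes Rieffel's continuous-field theorem \cite[Theorem 11.2]{Rie:GHD}. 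That citation outsources exactly what you build by hand: the bridge seminorm, the Hausdorff estimate on the embedded state spaces, and the absorption of the multiplicative defect between two close Lip-norms. Your hand-rolled version via the comparison map $\Phi_q$ is viable and more self-contained, but it obliges you to track the $*$-defect of $\Phi_q$, a bookkeeping problem the paper never encounters because the evaluation maps $\T{ev}_q$ are unital $*$-homomorphisms on the field, so the comparison through the self-adjoint section space is exactly $*$-compatible for free.

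Where you go astray --- not fatally, but in a way that manufactures your own ``technical heart'' --- is the claim that Proposition \ref{p:derV} is essential for phrasing the Lip-norm inside the continuous field. Since $u$ is unitary one has $\|\de(x)\| = \|u \pa(x) u^*\| = \|\pa(x)\|$, and $\pa(x)$ already lies in $\mathbb{M}_2\big(\C O(SU_q(2))\big) \su \mathbb{M}_2\big(C(SU_q(2))\big)$, i.e.\ in a matrix amplification of the very same field; the conjugation by $u$ was introduced in the paper solely for Berezin-equivariance (Lemma \ref{l:derI} and Proposition \ref{p:derVI}) and buys nothing for continuity in $q$. Moreover $\pa_k$ fixes $\C O(S_q^2)$, so $\pa_1$ obeys the untwisted Leibniz rule there, and by \eqref{eq:derstar} the Lip-norm of a self-adjoint element is computed from $\pa_1$ alone; the paper therefore only needs the formulae $\pa_1(A_q) = -b_q^* a_q^*$, $\pa_1(B_q) = (b_q^*)^2$ and $\pa_1(B_q^*) = -q^{-1}(a_q^*)^2$, whose coefficients are continuous on all of $(0,1]$, so the degeneration you fear at $q = 1$ simply never arises. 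If you insist on the $\de$-route, the obstacle does dissolve: $\de_k$ acts diagonally on the monomial basis, $\de_k(A_q^i B_q^j) = q^j \cd A_q^i B_q^j$, whence $\de_3(A_q^i B_q^j) = \tfrac{q^j - q^{-j}}{q - q^{-1}} \cd A_q^i B_q^j \to j \cd A_q^i B_q^j$, matching $\tfrac12 \de_h$ at $q=1$ --- but this verification, which you leave open, is work the paper's choice of derivation renders unnecessary.
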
 
\begin{proof}
Let $\de \in (0,1)$ be given. By \cite[Proposition 7.1]{Bla:DCH}, there exists a continuous field of $C^*$-algebras over $[\de,1]$ with total space $C(SU_\bullet(2))$ such that $C(SU_q(2))$ agrees with the fibre at $q \in [\de,1]$. There exist continuous sections of this field $A_\bullet, B_\bullet \in C(SU_\bullet(2))$ mapping to the generators $A_q$ and $B_q \in C(S_q^2) \su C(SU_q(2))$ under the quotient map $\T{ev}_q \colon C(SU_\bullet(2)) \to C(SU_q(2))$ for each $q \in [\de,1]$. Let us define the subspace
\[
V := \T{span}_\cc\big\{ A^i_\bullet B_\bullet^j, A_\bullet^i (B_\bullet^*)^j \mid i,j \in \nn_0 \, , \, \, i + j \leq N \big\}
\su C(SU_\bullet(2))
\]
and let $V_{\sa} \su V$ denote the real part $V_{\sa} := \big\{ x + x^* \mid x \in V \big\}$ so that $V_{\sa}$ becomes a real vector space of dimension $(N + 1)^2$ containing the unit $1$ from $C(SU_\bullet(2))$. \\
We remark that it follows from Definition \ref{d:quantumfuzz} that the image $\T{ev}_q(V_{\sa})$ agrees with the order unit space $(F_q^N)_{\sa}$ for each $q \in [\de,1]$. Therefore, upon defining $\| y\|_q := \| \T{ev}_q(y) \|$ for each $q \in [\de,1]$ we obtain a continuous field $\{ \| \cd \|_q\}_{q \in [\de,1]}$ of order unit norms on $(V_{\sa},1)$. For each $q \in [\de,1]$ we now record the formulae $\pa_1(A_q) = -b_q^* a_q^*$, $\pa_1(B_q) = (b^*_q)^2$ and $\pa_1(B_q^*) = - q^{-1} (a_q^*)^2$. Since we have continuous sections $a_\bullet, b_\bullet \in C(SU_\bullet(2))$ (mapping to the generators of $C(SU_q(2))$ under the quotient map $\T{ev}_q$) we then obtain a continuous family of Lip-norms $\{ L_q \}_{q \in [\de,1]}$ on $V_{\sa}$ defined by $L_q(y) := L_{D_q}( \T{ev}_q(y))$ for each $q \in [\de,1]$. The assumptions in \cite[Theorem 11.2]{Rie:GHD} are thereby fulfilled, and since $\de \in (0,1)$ was arbitrary this implies the claimed continuity result.
\end{proof}

\subsection{Quantum fuzzy spheres converge to the Podle\'s sphere}\label{subsec:quantum-fuzzy-to-podles}
In this section we fix a $q \in (0,1]$ and show that the quantum fuzzy spheres $F^N_q$ converge to $C(S_q^2)$ as $N$ tends to infinity. This follows directly from our analysis of the quantum Berezin transform and the following lemma, which is certainly part of the folklore knowledge, but seems not to be directly available in the literature. We remark that when the quantum Gromov-Hausdorff distance is replaced by Latr{\'e}moli{\`e}re's quantum propinquity, the statement can be found in \cite[Theorem 6.3]{Lat:QGH}, but for the benefit of the reader, we include a proof for the corresponding statement in our setting. {We emphasise that the seminorm $K \colon Y \to [0,\infty]$ appearing in the statement need not agree with $L|_Y$ -- the seminorms $K$ and $L$ only agree on the dense domain of $K$. 

\begin{lemma}\label{lem:order-unit-version-of-frederics-result}
Let $(X,L)$ be a compact quantum metric space and let $Y\su X$ be a sub-operator system equipped with a seminorm $K \colon Y \to [0,\infty]$ with dense $*$-invariant domain $\T{Dom}(K)\subseteq \T{Dom}(L)$ such that $K(y) = L(y)$ for all $y \in \T{Dom}(K)$. Let $\ep>0$. If for every $x\in X$ there exists $y\in Y$ such that $K(y)\leq L(x)$ and $\|x-y\| \leq \ep L(x)$, then $\dist_{\T{Q}}\big((X,L); (Y,K) \big)\leq \ep$.
\end{lemma}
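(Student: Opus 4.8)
\emph{Proof proposal.} Write $A := X_{\sa} \cap \T{Dom}(L)$ and $B := Y_{\sa} \cap \T{Dom}(K)$ for the associated order unit spaces. Since $Y \su X$ and $\T{Dom}(K) \su \T{Dom}(L)$ with $K = L$ on $\T{Dom}(K)$, we have $B \su A$, and the induced inclusion $B/\mathbb{R}\cd 1 \hookrightarrow A/\mathbb{R}\cd 1$ is isometric. In particular $\{b \in B : K(b) \leq 1\} = \{b \in B : L(b) \leq 1\} \su \{a \in A : L(a) \leq 1\}$, and since the latter has totally bounded image in $A/\mathbb{R}\cd 1$ (because $(X,L)$ is a compact quantum metric space, Theorem \ref{t:totallybdd}), so does the former in $B/\mathbb{R}\cd 1$; thus $(Y,K)$ is itself a compact quantum metric space and the distance in the statement is well defined. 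The plan is to exhibit a single admissible seminorm on $A \op B$ realising the bound, namely
\[
L_\ep(a,b) := \max\Big\{ L(a), \, K(b), \, \tfrac{1}{\ep} \| a - b \| \Big\},
\]
where $\| a - b\|$ is computed in the common ambient $C^*$-algebra. This is a finite seminorm, and $L_\ep(a,b) = 0$ forces $a, b \in \mathbb{R}\cd 1$ and $a = b$, so $L_\ep$ separates points modulo $\mathbb{R}\cd(1,1)$.

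First I would check that the quotient seminorms recover $L$ and $K$. The inequalities $\inf_b L_\ep(a,b) \geq L(a)$ and $\inf_a L_\ep(a,b) \geq K(b)$ are immediate. For the projection onto $B$ one takes $a = b$ (legitimate as $B \su A$), giving $L_\ep(b,b) = K(b)$. For the projection onto $A$, given $a \in A$ the hypothesis produces $y \in Y$ with $K(y) \leq L(a)$ and $\| a - y\| \leq \ep L(a)$; replacing $y$ by its self-adjoint part $b := \tfrac{1}{2}(y + y^*) \in B$ preserves both estimates, using that $K$ is $*$-invariant (as $K = L$ on a $*$-invariant domain and $L(\cd^*) = L$). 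Then $L_\ep(a,b) \leq L(a)$, so $\inf_b L_\ep(a,b) = L(a)$.

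Next I would verify that $(A \op B, L_\ep)$ is a compact quantum metric space; by the totally bounded criterion it suffices to show $S := \{(a,b) : L_\ep(a,b) \leq 1\}$ has totally bounded image in $(A \op B)/\mathbb{R}\cd(1,1)$. \textbf{This is the main obstacle}, the delicate point being that one quotients by the one-dimensional $\mathbb{R}\cd(1,1)$ rather than by $\mathbb{R}\cd 1 \op \mathbb{R}\cd 1$, so the two coordinates cannot be shifted independently. Given a sequence $(a_n, b_n) \in S$, I would first use total boundedness of the $L$-ball in $A/\mathbb{R}\cd 1$ to pass to a subsequence with $a_n - \mu_n 1 \to \alpha$ in norm for suitable $\mu_n \in \mathbb{R}$, then use total boundedness of the $K$-ball in $B/\mathbb{R}\cd 1$ to further arrange $b_n - \nu_n 1 \to \beta$. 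The constraint $\| a_n - b_n\| \leq \ep$ keeps $\mu_n - \nu_n$ bounded, so after a further subsequence $\mu_n - \nu_n \to t$; taking the common shift $\lambda_n := \mu_n$ yields $(a_n, b_n) - \lambda_n(1,1) \to (\alpha, \beta - t\cd 1)$ in norm, exhibiting a Cauchy subsequence in the quotient. Together with the separation property this makes $L_\ep$ admissible.

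Finally I would bound the Hausdorff distance. Under the isometric embeddings $\C S(A), \C S(B) \hookrightarrow \C S(A \op B)$ dual to the coordinate projections, a state $\mu \in \C S(A)$ is sent to $(a,b) \mapsto \mu(a)$ and a state $\nu \in \C S(B)$ to $(a,b) \mapsto \nu(b)$. For $\mu \in \C S(A)$ I would pair it with its restriction $\mu|_B \in \C S(B)$: since every $(a,b)$ with $L_\ep(a,b) \leq 1$ satisfies $\| a - b\| \leq \ep$ and $\mu$ is contractive,
\[
\rho_{L_\ep}(\mu, \mu|_B) = \sup\big\{ |\mu(a) - \mu(b)| : L_\ep(a,b) \leq 1 \big\} \leq \ep .
\]
Conversely, for $\nu \in \C S(B)$ I would extend it to a state $\mu \in \C S(A)$ by the order-unit Hahn-Banach theorem (as $B \su A$ share the order unit) and run the identical estimate, now using $\nu(b) = \mu(b)$. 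Hence the two embedded state spaces lie within $\ep$ of one another, so $\dist_{\T{H}}^{\rho_{L_\ep}}(\C S(X), \C S(Y)) \leq \ep$ and therefore $\dist_{\T{Q}}\big((X,L); (Y,K)\big) \leq \ep$.
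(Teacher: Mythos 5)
Your proposal is correct, and its skeleton coincides with the paper's proof: the same reduction to the order unit spaces $A$ and $B$, the same admissible seminorm $L_\ep(a,b) = \max\big\{L(a),\, K(b),\, \tfrac{1}{\ep}\|a-b\|\big\}$ on $A \op B$, the same verification of the quotient conditions (taking $b=a$ for one direction and the approximation hypothesis for the other), and the same Hausdorff estimate pairing $\mu \in \C S(A)$ with its restriction $\mu|_B$ and $\nu \in \C S(B)$ with a Hahn--Banach extension. The one genuine divergence is the admissibility step: the paper outsources it to \cite[Theorem 5.2]{Rie:GHD}, which says that a seminorm on $A\op B$ whose quotient seminorms are the given Lip-norms is automatically a Lip-norm, whereas you prove total boundedness of the $L_\ep$-ball modulo $\mathbb{R}\cd(1,1)$ directly. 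Your handling of the common-shift subtlety --- extracting Cauchy subsequences of $a_n - \mu_n 1$ and $b_n - \nu_n 1$ separately and then using $\|a_n - b_n\| \leq \ep$ together with the boundedness of Cauchy sequences to control $\mu_n - \nu_n$ --- is exactly the content that Rieffel's theorem encapsulates, so your argument is self-contained where the paper's is not; what the citation buys the paper is brevity, what your argument buys is independence from the memoir and visibility of where the hypothesis $\|a-b\|\leq \ep$ enters the compactness. You also make explicit a point the paper glosses over: the hypothesis produces a possibly non-self-adjoint $y \in Y$, and the replacement $b := \tfrac12(y+y^*)$, legitimised by the $*$-invariance of $K$, is needed to land in $B$ while preserving both estimates. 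One cosmetic caveat: the limits $\alpha$ and $\beta$ in your extraction argument need only exist in the completion of the quotient, but since (as you say in closing) Cauchyness of the subsequence is all that total boundedness requires, this does not affect the proof.
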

Note that an application of Theorem \ref{t:totallybdd} gives that $(Y,K)$ automatically becomes a compact quantum metric spaces, so that the statement in  the lemma indeed makes sense.

\begin{proof}
As explained in Section \ref{subsec:cqms}, the compact quantum metric spaces $(X,L)$ and $(Y,K)$ give rise to order unit compact quantum metric space denoted by $A:=\{x\in X_{\sa} \mid L(x)<\infty\}$ and $B:=\{y\in Y_\sa \mid K(y)<\infty\}$. And in fact, since
\[
\dist_{\T{Q}}\big((X,L); (Y,K)\big)= \dist_{\T{Q}}\big((A,L); (B ,K)\big)  
\]
one may as well pass to the order unit setting when considering matters related to the quantum Gromov-Hausdorff distance. \\
We define a seminorm $L_\ep$ on $A \oplus B$ by setting
\[
L_\ep(a,b):=\max\left\{L(a), \, K(b), \, \tfrac{1}{\ep}\|a-b\| \right\}.
\]
By construction, $L_\ep(a,b)=0$ if and only if $(a,b)=t(1,1)$ for some $t\in \rr$. By assumption, for each $a\in A$ there exists $b \in B$ such that $K(b)\leq L(a)$ and $\|a-b\|\leq \ep L(a)$ and hence
\[
L_\ep(a,b) \leq L(a).
\]
Conversely, if $b \in B$ we trivially have that $L_\ep(b,b) \leq L(b)$ (using now that $K(b) = L(b)$). This proves that the assumptions in \cite[Theorem 5.2]{Rie:GHD} are fulfilled, and from this we obtain that $L_\ep \colon A\oplus B \to [0,\infty)$ is an admissible Lip-norm. \\
Next, denote by $\pi_1\colon A\oplus B \to A$ and $\pi_2\colon A\oplus B \to B$ the natural projections, and note that by  \cite[Proposition 3.1]{Rie:GHD}, the dual maps $\pi_1^* \colon  \C S(A) \to \C S(A \oplus B)$ and  $\pi_2^*\colon  \C S(B) \to \C S(A \oplus B)$ are isometries for the associated Monge-Kantorovi\v{c} metrics. Given $\psi \in \C S(B)$, we extend $\psi$ to a state $\tilde{\psi}$ on $A$ using the Hahn-Banach theorem, see \cite[Chapter II (1.10)]{alfsen-book}. For $(a,b) \in A\oplus B$ with $L_\ep (a,b) \leq 1$, we have $\| b-a\|\leq \ep$ and thus
\[
\big|\pi_2^*(\psi)(a,b)-\pi_1^*(\tilde{\psi})(a,b)\big| = \big|\psi(b)-\tilde{\psi}(a) \big|
 = \big|\tilde{\psi}(b-a)\big| \leq \|b-a\| \leq \ep.
\] 
Hence $\rho_{L_{\ep}}\big(\pi_2^*(\psi), \pi_1^*(\tilde{\psi})\big)\leq \ep$. Conversely, given $\varphi \in \C S(A)$ we have $\varphi\vert_B \in \C S(B)$ and an analogous computation proves the inequality $\rho_{L_{\ep}}\big(\pi_2^*(\varphi\vert_B), \pi_1^*(\varphi)\big)\leq \ep$. Hence, we have shown that
\[
\dist_H^{\rho_{L_\ep}}\big(\pi_1^*(\C S(A)), \pi_2^*(\C S(B))\big)\leq \ep,
\]
and therefore $\dist_{\T{Q}}\big((A,L);(B,K) \big)\leq \ep$ as desired. 
\end{proof} }

Our result regarding convergence of the quantum fuzzy spheres towards the Podle\'s sphere can now be stated and proved. We emphasise that the domain of our Lip-norm $L_{D_q} \colon C(S_q^2) \to [0,\infty]$ is given by the coordinate algebra $\C O(S_q^2)$. 

\begin{thm}\label{thm:fuzzy-to-podles}
For each $q\in (0,1]$ it holds that $\displaystyle\lim_{N\to \infty} \dist_{\T{Q}}\big(F^N_q; C(S_q^2)\big)=0$.
\end{thm}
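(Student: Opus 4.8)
The plan is to combine the two key facts established earlier in the paper: the Berezin transform $\be_N$ has image equal to $F_q^N$ (Lemma \ref{lem:fuzzy-equal-image}), it is a Lip-norm contraction (Proposition \ref{p:derVI}), and it approximates the identity on the Lip unit ball with an error that vanishes as $N \to \infty$ (Corollary \ref{cor:fuzzy-approx}). The abstract tool that glues these together is Lemma \ref{lem:order-unit-version-of-frederics-result}, applied with $X = C(S_q^2)$, $L = L_{D_q}$, and $Y = F_q^N$ equipped with the seminorm $K := L_{D_q}|_{F_q^N}$ (whose domain is all of the finite-dimensional $F_q^N$, and which therefore trivially has dense $*$-invariant domain contained in $\T{Dom}(L_{D_q}) = \C O(S_q^2)$, since $F_q^N \su \C O(S_q^2)$ and $K$ agrees with $L$ there).

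First I would fix $q \in (0,1]$ and let $\ep > 0$ be given. By Corollary \ref{cor:fuzzy-approx} there is an $N_0$ so that $\|x - \be_N(x)\| \leq \ep \cd L_{D_q}(x)$ for all $N \geq N_0$ and all $x \in \C O(S_q^2)$. Fix such an $N \geq N_0$. Now I need to verify the hypothesis of Lemma \ref{lem:order-unit-version-of-frederics-result}: for every $x \in C(S_q^2)$ there is $y \in F_q^N$ with $K(y) \leq L_{D_q}(x)$ and $\|x - y\| \leq \ep\, L_{D_q}(x)$. If $L_{D_q}(x) = \infty$ the condition is vacuous (take $y = 0$, say), so assume $x \in \C O(S_q^2)$. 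Then set $y := \be_N(x)$. By Lemma \ref{lem:fuzzy-equal-image} we have $y \in \T{Im}(\be_N) = F_q^N$; by Proposition \ref{p:derVI}, $K(y) = L_{D_q}(\be_N(x)) \leq L_{D_q}(x)$; and by the choice of $N$, $\|x - y\| = \|x - \be_N(x)\| \leq \ep\, L_{D_q}(x)$. Hence Lemma \ref{lem:order-unit-version-of-frederics-result} yields $\dist_{\T{Q}}\big((C(S_q^2), L_{D_q}); (F_q^N, K)\big) \leq \ep$ for all $N \geq N_0$, which is exactly the statement that $\dist_{\T{Q}}(F_q^N; C(S_q^2)) \to 0$ as $N \to \infty$.

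I do not expect any genuine obstacle here: every ingredient has already been assembled in Sections \ref{sec:quantum-fuzzy-spheres} and \ref{sec:qGH-convergence}, and the proof is essentially a three-line bookkeeping argument. The only point requiring a moment's care is the role of the seminorm $K$ on $F_q^N$ versus the ambient $L_{D_q}$ — but since $F_q^N$ is finite-dimensional and contained in $\C O(S_q^2)$, the restriction $L_{D_q}|_{F_q^N}$ is already finite everywhere and agrees with $L_{D_q}$ on $F_q^N$, so the mild generality built into Lemma \ref{lem:order-unit-version-of-frederics-result} is not even needed; one could equally phrase it with $(F_q^N, L_{D_q})$ directly, as is done when this quantum metric space was introduced. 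The other subtlety, purely cosmetic, is that Corollary \ref{cor:fuzzy-approx} is stated for $x \in \C O(S_q^2)$ while the hypothesis of Lemma \ref{lem:order-unit-version-of-frederics-result} quantifies over all $x \in X$; this is handled by observing that $L_{D_q}(x) = \infty$ outside $\C O(S_q^2)$, making the required inequality trivially satisfiable there.
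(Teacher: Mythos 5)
Your proof is correct and is exactly the paper's argument: the paper's own proof of Theorem \ref{thm:fuzzy-to-podles} is the one-line citation of Proposition \ref{p:derVI}, Corollary \ref{cor:fuzzy-approx} and Lemma \ref{lem:order-unit-version-of-frederics-result}, and you have simply (and correctly) filled in the bookkeeping, including the correct handling of elements with $L_{D_q}(x)=\infty$.
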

\begin{proof}
This follows from Proposition \ref{p:derVI}, Corollary \ref{cor:fuzzy-approx} and Lemma \ref{lem:order-unit-version-of-frederics-result}.
\end{proof}

Note that when $q=1$, Theorem \ref{thm:fuzzy-to-podles} gives a variation of Rieffel's original result \cite[Theorem 3.2]{Rie:MSG}, but since our Lip-norm on $F^N_1$ is { a priori} different from the one considered in \cite{Rie:MSG}, we do not recover the classical result verbatim.

\subsection{The Podle\'s spheres converge to the sphere}
In this section we prove the main result of the paper, which at this point follows rather easily from the analysis carried out in the previous sections. 
\begin{thm}\label{thm:podles-converging-to-classical}
For any $q_0\in(0,1]$ one has $\displaystyle\lim_{q\to q_0} \dist_{\T{Q}}\big( C(S_q^2); C(S_{q_0}^2)\big)=0$.
\end{thm}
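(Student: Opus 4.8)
The plan is to combine the triangle inequality for $\dist_{\T{Q}}$ with the two convergence results already at our disposal: Theorem \ref{thm:fuzzy-to-podles} (quantum fuzzy spheres converge to $C(S_q^2)$ for each fixed $q$) and Proposition \ref{prop:quantum-fuzzy-to-classical-fuzzy} (for fixed $N$, the quantum fuzzy spheres $F_q^N$ vary continuously in $q$). The key extra ingredient needed is that the convergence in Theorem \ref{thm:fuzzy-to-podles} is \emph{uniform} for $q$ in a neighbourhood of $q_0$, and this is precisely what Lemma \ref{l:conmetVII} provides (together with Proposition \ref{p:derVI} and Lemma \ref{lem:order-unit-version-of-frederics-result}). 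So the architecture is: given $\ep>0$, first pick $\delta\in(0,1)$ with $q_0\in[\delta,1]$; then use the uniform estimate to find a single $N$ and an open interval $I\ni q_0$ such that $\dist_{\T{Q}}\big(F_q^N; C(S_q^2)\big)\leq\ep/3$ for all $q\in I\cap[\delta,1]$; then use continuity of $q\mapsto F_q^N$ to shrink $I$ so that $\dist_{\T{Q}}\big(F_q^N; F_{q_0}^N\big)\leq\ep/3$ for $q$ in this smaller interval; finally invoke the triangle inequality
\[
\dist_{\T{Q}}\big(C(S_q^2); C(S_{q_0}^2)\big)\leq \dist_{\T{Q}}\big(C(S_q^2); F_q^N\big) + \dist_{\T{Q}}\big(F_q^N; F_{q_0}^N\big) + \dist_{\T{Q}}\big(F_{q_0}^N; C(S_{q_0}^2)\big)\leq \ep.
\]

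**Carrying out the steps.** First I would fix $\ep>0$ and $q_0\in(0,1]$, choose $\delta\in(0,q_0)$, and apply Lemma \ref{l:conmetVII} with $\ep/6$ (say), $q_0$, and $N_0=0$ to obtain $N\in\nn$ and an open interval $I$ with $q_0\in I$ such that $\|x-\be_N(x)\|\leq(\ep/6)L_{D_q}(x)$ for all $q\in I\cap[\delta,1]$ and all $x\in\C O(S_q^2)$. Combining this with Proposition \ref{p:derVI} (the Berezin transform is a Lip-norm contraction, so $L_{D_q}(\be_N(x))\leq L_{D_q}(x)$) and recalling from Lemma \ref{lem:fuzzy-equal-image} that $\T{Im}(\be_N)=F_q^N$, the hypotheses of Lemma \ref{lem:order-unit-version-of-frederics-result} are met with $(X,L)=(C(S_q^2),L_{D_q})$ and $(Y,K)=(F_q^N,L_{D_q})$; hence $\dist_{\T{Q}}\big(C(S_q^2); F_q^N\big)\leq\ep/6$ for every $q\in I\cap[\delta,1]$, and in particular also for $q_0$ itself. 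Next, Proposition \ref{prop:quantum-fuzzy-to-classical-fuzzy} (with this same $N$ and $\delta$) gives that $q\mapsto F_q^N$ is continuous at $q_0$, so there is an open interval $J\su I$ with $q_0\in J$ and $\dist_{\T{Q}}\big(F_q^N; F_{q_0}^N\big)\leq\ep/6$ for all $q\in J\cap[\delta,1]$. The triangle inequality for $\dist_{\T{Q}}$ (established in \cite{Rie:GHD}) then yields $\dist_{\T{Q}}\big(C(S_q^2); C(S_{q_0}^2)\big)\leq 3\cdot\ep/6 <\ep$ for $q\in J\cap[\delta,1]$, i.e.\ for $q$ in a one-sided (or two-sided, if $q_0<1$) neighbourhood of $q_0$. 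Since $\delta$ can be taken arbitrarily small, this handles approach to $q_0$ from below as well; for $q_0=1$ only the left limit is meaningful, and for $q_0<1$ one repeats the argument on $[\delta,q_0+\eta]\su(0,1]$ to cover a genuine two-sided neighbourhood.

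**Main obstacle.** None of the steps are genuinely hard at this stage — the heavy lifting was done in Lemma \ref{l:conmetVII} (the uniform estimate on $d_q(h_N,\epsilon)$, which rested on the explicit metric formula from \cite{GKK:QI} and the continuity analysis of the functions $f$ and $g_N$) and in Proposition \ref{prop:quantum-fuzzy-to-classical-fuzzy} (continuity of the finite-dimensional field, via \cite[Theorem 11.2]{Rie:GHD} and the continuous field structure from \cite{Bla:DCH}). The only point requiring a little care is bookkeeping with the interval: one must make sure that the interval $I$ produced by Lemma \ref{l:conmetVII} and the interval $J$ produced by the continuity statement are intersected with $[\delta,1]$ in a compatible way, and that letting $\delta\to 0$ recovers a genuine neighbourhood of $q_0$ in $(0,1]$ rather than just a half-neighbourhood; since the estimates hold on every $[\delta,1]$ with $\delta<q_0$, this is automatic. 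I would therefore expect the proof to be short, essentially just assembling Proposition \ref{p:derVI}, Lemma \ref{lem:order-unit-version-of-frederics-result}, Lemma \ref{l:conmetVII}, Proposition \ref{prop:quantum-fuzzy-to-classical-fuzzy}, and the triangle inequality.
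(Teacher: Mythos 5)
Your proposal is correct and follows exactly the same route as the paper's own proof: the paper likewise assembles Proposition \ref{p:derVI}, Lemma \ref{l:conmetVII} and Lemma \ref{lem:order-unit-version-of-frederics-result} to get $\dist_{\T{Q}}(F_q^N;C(S_q^2))<\ep/3$ uniformly on an interval around $q_0$, shrinks the interval via Proposition \ref{prop:quantum-fuzzy-to-classical-fuzzy}, and concludes with the triangle inequality. Your extra bookkeeping with $\delta$ and the one-sided versus two-sided neighbourhood is a harmless elaboration of the same argument.
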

\begin{proof}
Let $\ep>0$ be given. By Proposition \ref{p:derVI}, Lemma \ref{l:conmetVII} and Lemma \ref{lem:order-unit-version-of-frederics-result} there exists an open interval $I$ containing $q_0$ and an $N\in \nn_0$ such that $\dist_{\T{Q}}(F_q^N; C(S_q^2))<\ep/3$ for all $q\in I\cap (0,1]$. Upon shrinking $I$ if needed, Proposition \ref{prop:quantum-fuzzy-to-classical-fuzzy} shows that we may assume $\dist_{\T{Q}}(F^N_q; F^N_{q_0})<\ep/3$ for all $q\in I \cap (0,1]$. Given $q \in I$, the inequality $\dist_{\T{Q}}\big(C(S_q^2);C(S_{q_0}^2) \big)<\ep$ now follows from the triangle inequality for the quantum Gromov-Hausdorff distance, see \cite[Theorem 4.3]{Rie:GHD}.
\end{proof}

\begin{remark}
Theorem \ref{thm:podles-converging-to-classical} is, in reality, a result about the coordinate algebras $\C O(S_q^2)$, $q \in (0,1]$, in so far that these are exactly the domains of the Lip-norms $L_{D_q} \colon C(S_q^2) \to [0,\infty]$. These coordinate algebras are the natural domains from a Hopf-algebraic point of view. Another natural (and much larger) domain would be the Lipschitz algebra $C^{\Lip}(S_q^2)$, and,  in fact, the above convergence result holds true for the Lip-norms $L_{D_q}^{\T{max}} \colon C(S_q^2) \to [0,\infty]$ with domain  $C^{\Lip}(S_q^2)$ discussed in Remark \ref{r:lipspec}. This result relies on Theorem \ref{thm:podles-converging-to-classical} but requires a substantial amount of extra analysis. For this reason we defer the details to the separate paper \cite{AKK:DistZero}. In fact, we show in \cite[Theorem A]{AKK:DistZero} that the quantum Gromov-Hausdorff distance between $(C(S_q^2), L_{D_q})$ and $(C(S_q^2), L_{D_q}^{\T{max}})$ is equal to zero for each $q \in (0,1]$.
\end{remark}

\bibliographystyle{plain}

\end{document}